\newtheorem{theorem}{Theorem}[section]
\newtheorem{proposition}[theorem]{Proposition}
\newtheorem{lemma}[theorem]{Lemma}
\newtheorem{corollary}[theorem]{Corollary}
\newtheorem{conjecture}[theorem]{Conjecture}
\theoremstyle{definition}
\newtheorem{definition}[theorem]{Definition}
\numberwithin{equation}{section}
\newcommand{\R}{\mathbb{R}}
\newcommand{\cF}{\mathcal{F}}
\renewcommand{\epsilon}{\varepsilon}
\renewcommand{\rho}{\varrho}
\DeclareMathOperator{\HC}{HC} 
\long\def\forget#1\forgotten{} 
\begin{document}

\title{Filling metric spaces}

\author[Liokumovich, Lishak, Nabutovsky, Rotman]{Yevgeny Liokumovich, Boris Lishak, Alexander Nabutovsky and Regina Rotman}
\date{}
\maketitle

\begin{abstract}
We prove a new version of isoperimetric inequality:
Given a positive real $m$, a Banach space $B$, 
a closed subset $Y$ of metric space $X$ and a continuous
map $f:Y \rightarrow B$ with $f(Y)$ compact 
$$\inf_F\HC_{m+1}(F(X))\leq c(m)\HC_m(f(Y))^{\frac{m+1}{m}},$$ where $\HC_m$ denotes the $m$-dimensional Hausdorff content, the infimum is taken
over the set of all continuous maps $F:X\longrightarrow B$ such that $F(y)=f(y)$ for all $y\in Y$,
and $c(m)$ depends only on $m$. Moreover, 
one can find $F$ with a nearly minimal $\HC_{m+1}$ such that its image 
lies in the $C(m)\HC_m(f(Y))^{1\over m}$-neighbourhood of $f(Y)$ with the exception of a subset with zero $(m+1)$-dimensional Hausdorff measure.

The paper also contains a very general coarea inequality for Hausdorff content and its modifications.

As an application we demonstrate an inequality conjectured by Larry Guth
that relates the $m$-dimensional
Hausdorff content of
a compact metric space with its $(m-1)$-dimensional Urysohn width. 
We show that this result implies new systolic
inequalities that both strengthen the classical
Gromov's systolic inequality 
for essential Riemannian manifolds
and extend this inequality 
to a wider class of non-simply connected manifolds.

\end{abstract}

\section{Introduction}

\subsection{ Isoperimetric inequalities.}

Given a metric space $X$ and a real $m>0$, one can define the {\it $m$-dimensional Hausdorff content} $\HC_m(U)$ of $U$, $U \subset X$, as the infimum of
$\sum_i r_i^m$ among all coverings of $U$ by countably many balls $B(x_i, r_i)$ in $X$. The definition of Hausdorff content 
looks similar to the definition of Hausdorff measure,
except that for Hausdorff content we do not take the limit
over all coverings with the maximal radius of the ball in the covering tending to $0$. In particular, the  $m$-dimensional Hausdorff
content of a set $U$ is always less than or equal
to the $m$-dimensional Hausdorff measure
of $U$. When the Hausdorff dimension of 
a compact metric space $U$
is greater than $m$ the $m$-dimensional Hausdorff measure is infinite, but $\HC_m(U)$
is always finite and can be very small. For integer $m$ the Lebesgue measure (volume) $vol(U)$ of a set with Hausdorff dimension $m$ satisfies
the inequality $\HC_m(U)\leq {1\over V_n(1)}vol(U),$ where $V_n(1)={\pi^{m\over 2}\over\Gamma({m\over 2}+1)}$ is the volume of a ball of radius $1$ in $\mathbb{R}^n$.


The main result of this paper is an isoperimetric inequality for Hausdorff contents.  
To state our main theorem
it will be convenient to introduce notation for the infimal Hausdorff content of an 
extension of a map between metric spaces. One can think of it as an analogue 
of the filling volume function. Given metric spaces $X,B$, subsets $Y \subset X$, $U \subset B$
and a map $f:Y \rightarrow U$ define
$\cF_m(f,X,U) = \inf \{\HC_m(F(X))\}$, where the infimum is taken over all maps
$F:X \rightarrow U$, such that $F|_Y = f$. If no such extension $F$ exists we set 
$\cF_m(f,X,U)=\infty$.

\begin{theorem} \label{isoperimetric0}
For each positive $m$ there exists a constant $const(m)$, such that
the following holds.
Let $B$ be a Banach space, $X$ metric space, $Y \subset X$ closed subset
and $f:Y \longrightarrow B$ a continuous map
with $f(Y)$ compact, then
$$\cF_{m+1}(f,X,B) \leq const(m)\HC_m(f(Y))^{m+1\over m},$$
where one can take $const(m)=(100m)^m$.
\end{theorem}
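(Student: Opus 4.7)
The plan is to construct the extension by mapping into an explicit ``thickened nerve'' $K \subset B$ built from a near-optimal cover of $f(Y)$, and then to bound $\HC_{m+1}(K)$ in terms of the cover data. Fix $\epsilon > 0$ and, using compactness of $f(Y)$, choose finitely many balls $\{B(x_i, r_i)\}_{i=1}^N$ in $B$ covering $f(Y)$ with $\sum_i r_i^m \leq \HC_m(f(Y)) + \epsilon$. The key automatic inequality is that $r_i \leq (\HC_m(f(Y)) + \epsilon)^{1/m}$ for every $i$, because any single $r_i^m$ is bounded by the full sum. Hence
\[
\sum_i r_i^{m+1} \leq \max_i r_i \cdot \sum_i r_i^m \leq (\HC_m(f(Y)) + \epsilon)^{(m+1)/m},
\]
which is the arithmetic heart of the isoperimetric inequality.

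Next I form the nerve $\mathcal{N}$ of the slightly enlarged cover $\{B(x_i, 2 r_i)\}$ and realize it affinely in $B$ by sending each vertex $v_i$ to $x_i$ and each simplex $[v_{i_0}, \ldots, v_{i_k}]$ to the affine simplex $[x_{i_0}, \ldots, x_{i_k}]$; call the realization $N$. Since the balls $B(x_{i_j}, 2 r_{i_j})$ share a common point, their centers lie pairwise within $4 \max_j r_{i_j}$, and so each affine simplex sits inside $B(x_{i^*}, 4 r_{i^*})$ around its largest-radius vertex. Thus $N$ together with the original balls lies in $K := \bigcup_i B(x_i, 4 r_i)$, which gives
\[
\HC_{m+1}(K) \leq 4^{m+1}(\HC_m(f(Y)) + \epsilon)^{(m+1)/m}.
\]
Extra factors in the final constant $(100m)^m$ would absorb losses coming from the extension step below.

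It remains to build a continuous extension $F: X \to K$ with $F|_Y = f$, and this is the main obstacle. The idea is to use a Lipschitz partition of unity $\{\lambda_i\}$ on $B$ subordinate to $\{B(x_i, 2 r_i)\}$ to define the nerve projection $\pi(p) = \sum_i \lambda_i(p) x_i$ (which maps the cover neighborhood into $N$), together with the straight-line homotopy $h_t(p) = (1-t) p + t \pi(p)$. Using a Dugundji-style cover $\{U_\alpha\}$ of $X \sm Y$ with representatives $y_\alpha \in Y$ and a subordinate partition $\{\mu_\alpha\}$ of $X$, I would define $F$ locally as a convex combination of the points $h_{t(x)}(f(y_\alpha))$, where $t(x) \in [0,1]$ is a cutoff depending on $d(x, Y)$ that vanishes on $Y$. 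The technical difficulty is to guarantee that these local convex combinations genuinely stay in $K$, rather than escaping into the (much larger) convex hull of $f(Y)$ produced by a naive Dugundji extension. This forces the cover $\{U_\alpha\}$ to be refined so that, for each $x$, all relevant $f(y_\alpha)$ lie in a common ball $B(x_i, 2 r_i)$ --- an alignment that one engineers from the uniform continuity of $f$ on $Y$ together with the combinatorics of the nerve. Once the extension is in place, letting $\epsilon \to 0$ produces the desired inequality, and the same construction shows that $F(X)$ lies in the asserted $O(\HC_m(f(Y))^{1/m})$-neighbourhood of $f(Y)$.
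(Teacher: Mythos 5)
Your arithmetic observations are correct: the near-optimal cover $\{B(x_i,r_i)\}$ satisfies $\max_i r_i \le (\HC_m(f(Y))+\epsilon)^{1/m}$, so $\sum_i r_i^{m+1} \le (\HC_m(f(Y))+\epsilon)^{(m+1)/m}$, and consequently $\HC_{m+1}\bigl(\bigcup_i B(x_i,4r_i)\bigr) \le 4^{m+1}(\HC_m(f(Y))+\epsilon)^{(m+1)/m}$. But this is not ``the heart'' of the isoperimetric inequality; the heart is the extension step, and that is where the proposal has an irreparable gap.

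The extension $F:X\to K$ with $K=\bigcup_i B(x_i,4r_i)$ does not exist in general, and no refinement of the Dugundji covering can fix this: the obstruction is topological, not just combinatorial. Take the example the paper itself raises in the introduction: let $Y$ be a thin torus of tube radius $\delta\ll 1$ around the unit circle in $\mathbb{R}^3$, $X$ the cone over $Y$, $m=2$. A near-optimal cover of $Y$ for $\HC_2$ is by $O(1/\delta)$ balls of radius $\sim\delta$ (giving $\HC_2(Y)\sim\delta$), and your $K$ is then a slightly thickened tube around the unit circle, still homotopy equivalent to a circle. Any continuous $F:X\to K$ extending the inclusion of $Y$ exhibits a null-homotopy of $Y$ inside $K$, hence of the longitudinal circle inside $K$; but that circle is a generator of $\pi_1(K)\cong\mathbb{Z}$ and is not null-homotopic in $K$. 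So no such $F$ exists. The difficulty you flag --- keeping the convex combinations from escaping into the convex hull of $f(Y)$ --- cannot be engineered away by refining $\{U_\alpha\}$, because for points of $X$ far from $Y$ the partition-of-unity average is genuinely forced to interpolate between far-apart regions of $f(Y)$, and the image has to leave $K$; here it must reach the disc bounded by the core circle. (Another sign that something is off: your construction would give a constant $4^{m+1}$, far better than the paper's $(100m)^m$, and in fact better than what is possible if one also wants the filling-radius control.)

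The paper's actual mechanism is quite different and is designed precisely to circumvent this obstruction. It proceeds by induction on $m$ and iterated local improvement: at each step one selects (via Proposition \ref{decomposition}, a Wenger-type decomposition using a \emph{modified} Hausdorff content $\widetilde{\HC}_m$ measured against a fixed near-optimal cover to fake additivity) a family of disjoint balls whose removal and re-filling by lower-dimensional cones (built with the inductive hypothesis on $\partial B_j\cap Y$ and the cone inequality, Lemma \ref{cone}) strictly decreases $\HC_m$ by a fixed factor $1-\alpha^m/2$. Iterating drives $\HC_m(Y_k)$ to $0$; once it is small enough, one projects to a finite-dimensional subspace $S'$ (Kadec--Snobar) and applies a Federer--Fleming deformation (Lemma \ref{Federer-Fleming}) to push $Y_k$ into an $(\lceil m\rceil-1)$-dimensional skeleton, whose cone contributes nothing to $\HC_{m+1}$. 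This is how the topological obstruction is resolved: the image is allowed to escape the small neighbourhood, but only into a low-dimensional complex that is $\HC_{m+1}$-negligible. Your proposal has neither the iterated-improvement mechanism nor the low-dimensional escape hatch, and so it cannot be completed along the lines you indicate.
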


Note that $B$ may be infinite-dimensional, and we do not require $m$ to be an integer. Moreover, we will show (see Theorem \ref{isoperimetric}) that the isoperimetric
extension in Theorem \ref{isoperimetric0}
also obeys the following filling radius estimate:
Assume $\HC_{m}(f(Y)) \neq 0$ and let
$R= Const(m) \HC_m(f(Y))^{1\over m}$, where one can take $Const(m)=(1500m)^m$. 
In general, for any choice of $Const(m)$ it is not true that $f$ can be extended
to map $F$ whose image lies in the neighbourhood
$N_R(f(Y))$. Indeed, imagine the case when $Y$ is a very long, but very thin ``round" $2$-torus embedded in $\mathbb{R}^3$, and $X$ is the cone over this torus. The image of $X$ should at least fill the long parallel of the torus.

However, we prove that if $N_R(Y)$ is 
$(\lceil m \rceil -1)$-connected, then
$$\cF_{m+1}(f,X,N_R(f(Y))) \leq const(m)\HC_m(f(Y))^{m+1\over m}.$$
More generally, if $U$ is a $(\lceil m\rceil-1)$-connected open
set that contains $N_R(f(Y))$, then 
$$\cF_{m+1}(f,X,U) \leq const(m)\HC_m(f(Y))^{m+1\over m}.$$
Also, one can always find $F$ such that $\HC_{m+1}(F(X))\leq const(m)\HC_m(f(Y))^{m+1\over m},$ and $F(X)\setminus N_R(Y)$ is contained in a $\lceil m\rceil$-dimensional simplicial complex, where $const(m)$ and $Const(m)$ can be chosen as above
(see Theorem \ref{isoperimetric} below for the exact statement).

Finally, given a continuous map (e.g. an inclusion) $i:Y\longrightarrow B$ with compact image, $i$ can be ``filled" in $Const(m)\HC_m(i(Y))^{1\over m}$-neighborhood of $Y$. We define the {\it $m$-filling} for
 such $i$ and each positive $m$ as follows: An $m$-filling of $i$ consists of an open set $O\supset i(Y)$,
 a $(\lceil m\rceil-1)$-dimensional simplicial complex $K\subset O$, and a continuous map $\phi:Y\longrightarrow K$ 
 such that $i$ is homotopic to $\phi$ inside $O$. 
 
In section 3 we prove that there always exists
an $m$-filling of $i$ with $O$ in $Const(m)\HC_m^{1\over m}(Y)$-neighborhood of $Y$  such that $dist_B(i(y),\phi(y))\leq Const(m)\HC_m^{1\over m}(Y)$,
and $\HC_{m+1}(O)\leq const(m)\HC_m^{m+1\over m}(Y)$.
Now the general theory of extendability of continuous maps to topological spaces easily implies that
given a map of any $X\supset Y$ to $B$ that extends $i$, this map can be altered on $X\setminus Y$ to a map of $X$ into 
any contractible or even $(\lceil m\rceil-1)$-connected open set $O'$ that contains $O$. One can also get a map of $X$ extending $i$ into
the union of $O$ and the image of the cone over $K$ under a continuous map to $B.$


Now observe the following corollary of our results in the classical case of codimension one:

\begin{corollary} \label{codimone}
Let $M^n$ be a closed connected topological submanifold of $(n+1)$-dimensional Banach space $B^{n+1}$. Let $\Omega$ denote
the closure of the bounded connected component of $B^{n+1}\setminus M^n$. Then for each $m>0$, 
$\HC_{m+1}(\Omega)\leq const(m)\HC_m(M^n)^{m+1\over m}$.
\end{corollary}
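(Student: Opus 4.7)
The plan is to apply Theorem~\ref{isoperimetric0} with $X=\Omega$, $Y=M^n$, and $f:M^n\hookrightarrow B^{n+1}$ the inclusion (which is continuous with compact image). This produces a continuous extension $F:\Omega\longrightarrow B^{n+1}$ with $F|_{M^n}=\mathrm{id}_{M^n}$ and
\[
\HC_{m+1}(F(\Omega))\leq const(m)\,\HC_m(M^n)^{\frac{m+1}{m}}.
\]
If I can verify the inclusion $\Omega\subset F(\Omega)$, then monotonicity of Hausdorff content gives $\HC_{m+1}(\Omega)\leq\HC_{m+1}(F(\Omega))$ and the corollary follows. So the only nontrivial step is this inclusion, and it is what I expect to be the main point of the proof.

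To prove $\Omega\subset F(\Omega)$ I would use Brouwer degree. Since $B^{n+1}$ is finite-dimensional it is homeomorphic to $\R^{n+1}$, so the Jordan--Brouwer separation theorem applies: the bounded component $\Omega^\circ$ of $B^{n+1}\setminus M^n$ is an open bounded set whose topological frontier is exactly $M^n$. Fix $p\in\Omega^\circ$. Working inside the Banach space, form the straight-line homotopy
\[
F_t(x)=(1-t)F(x)+tx,\qquad t\in[0,1],\ x\in\Omega.
\]
Because $F|_{M^n}=\mathrm{id}$, we have $F_t|_{M^n}=\mathrm{id}_{M^n}$ for every $t$, and in particular $p\notin F_t(M^n)=M^n$. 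Homotopy invariance of Brouwer degree then yields
\[
\deg(F,\Omega^\circ,p)=\deg(\text{inclusion},\Omega^\circ,p)=1,
\]
which forces $p\in F(\Omega)$. Since $p\in\Omega^\circ$ was arbitrary and $M^n\subset F(\Omega)$ automatically, I obtain $\Omega\subset F(\Omega)$.

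All of the real analytic work is carried by Theorem~\ref{isoperimetric0}, and the remaining ingredient is standard degree theory, so I do not anticipate any genuine obstacle. One small point worth noting is that $\Omega$ need not be a topological manifold with boundary (the embedding of $M^n$ may be wild, e.g.\ Alexander-horned-sphere style), but this does not affect the degree argument, which only uses that $\Omega^\circ$ is an open bounded subset of $\R^{n+1}$ with frontier $M^n$.
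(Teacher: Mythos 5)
Your argument is correct and follows the paper's own strategy exactly: apply Theorem~\ref{isoperimetric0} to the inclusion $M^n\hookrightarrow B^{n+1}$ with $X=\Omega$, then observe that any extension $F:\Omega\to B^{n+1}$ fixing $M^n$ pointwise must have image containing all of $\Omega$, so monotonicity of $\HC_{m+1}$ finishes the proof. The paper phrases the second step homologically ($M^n$ is not null-homologous in $B^{n+1}$ minus a point of $\operatorname{int}\Omega$), while you phrase it via Brouwer degree and the straight-line homotopy from $F$ to the identity; these are two standard packagings of the same Jordan--Brouwer fact, and your degree formulation is if anything slightly more self-contained since it sidesteps any discussion of $H_n(\Omega)$ for a possibly wild $\Omega$.
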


\begin{proof}
Let $B=B^{n+1}$, $Y=M^n$, $X=\Omega$. Observe that the image of {\it any} filling of $M^n$ by $\Omega$ in $B^{n+1}$ must contain
the whole $\Omega$ (as $M^n$ is not null-homologous in $B^{n+1}$ minus any point in the interior of $\Omega$). Now the corollary
immediately follows from the previous theorem.
\end{proof}

\par\noindent
{\bf Remarks and questions.} Observe, that Corollary \ref{codimone} is an analog of the classical (codimension one) isoperimetric inequality.
The inequality in Corollary \ref{codimone} is interesting only for $m\leq n$. (If $m>n$, then both sides of the inequality are equal to $0$.)
So, assume that $m\leq n$. Now the first question is what is the optimal value of $const(m)$? We also can fix $B^{n+1}$ and $m$
and consider the optimal constant $c(B^{n+1},m)$ such that $\HC_{m+1}(\Omega)\leq c(B^{n+1},m)\HC_m(M^n)^{m+1\over m}$ for all open domains $\Omega \subset B^{n+1}$ bounded by a closed hypersurface $M^n$. It is interesting to find the exact values of $c(B^{n+1},m)$ for $B^{n+1}=\mathbb{R}^{n+1}$ or $l_\infty^{n+1}$. A naive conjecture is that $c(B^{n+1},m)$ is always equal to $1$. In fact, it is possible that the constant $const(m)$ in Theorem \ref{isoperimetric0}  and $Const(m)$ in its extension (Theorem \ref{isoperimetric})  are both equal to $1$.

The reasoning behind this conjecture is that one expect that the optimal constants in isoperimetric inequalities are achieved for metric balls. (One can explore this conjecture also for infinite-dimensional Banach spaces, especially, $l^\infty$.)
As a first step in this direction we will demonstrate that:

\begin{proposition} \label{isoperimetric constant}
$$c(l^{n+1}_\infty, n)=1.$$ In particular, if a closed topological manifold $M^n$ bounds a
domain $\Omega$ in $l^{n+1}_\infty$, then
$\HC_{n+1}(\Omega)\leq\HC_n(M^n)^{n+1\over n}$, and if $\Omega$ is a coordinate cube, then this inequality becomes an equality.
\end{proposition}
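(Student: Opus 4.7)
The plan is to reduce the claim to the classical Loomis--Whitney inequality, exploiting the feature of $l^{n+1}_\infty$ that every metric ball is an axis-aligned cube. First I would establish the identity $\HC_{n+1}(U)=2^{-(n+1)}\Vol(U)$ for any bounded measurable $U\ss l^{n+1}_\infty$: a ball of radius $r$ is precisely a cube of side $2r$, so the infimum of $\sum r_i^{n+1}$ over $l^\infty$-ball covers coincides with $2^{-(n+1)}$ times the infimum of $\sum\Vol(C_i)$ over axis-aligned cube covers, and the latter equals the Lebesgue outer measure of $U$. The analogous identity $\Vol_n(V)=2^n\HC_n(V)$ holds in $l^n_\infty$.

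Next I would invoke Loomis--Whitney,
$$\Vol(\Omega)^n\leq\prod_{j=1}^{n+1}\Vol_n(\pi_j(\Omega)),$$
where $\pi_j$ denotes projection onto the hyperplane $\{x_j=0\}$. The key geometric observation is that $\pi_j(\Omega)=\pi_j(M^n)$ for each $j$, proved by a short extreme-point argument: for any $y\in\pi_j(\Omega)$ the fiber $\pi_j^{-1}(y)\cap\Omega$ is a non-empty compact subset of a line, and its maximum $x_j$-coordinate point must lie on $\partial\Omega\ss M^n$, giving $y\in\pi_j(M^n)$.

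Given an almost-optimal cover of $M^n$ by balls $B(x_i,r_i)$ with $\sum r_i^n<\HC_n(M^n)+\epsilon$, the projected balls $B(\pi_j(x_i),r_i)\ss l^n_\infty$ cover $\pi_j(M^n)$ by $n$-cubes of side $2r_i$, so $\Vol_n(\pi_j(M^n))\leq 2^n\sum r_i^n$. Substituting these $n+1$ bounds into Loomis--Whitney and using the two volume-content identities yields $\HC_{n+1}(\Omega)\leq(\HC_n(M^n)+\epsilon)^{(n+1)/n}$; sending $\epsilon\to 0$ proves $c(l^{n+1}_\infty,n)\leq 1$.

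For the equality claim with $\Omega=[0,L]^{n+1}$, the projection $\pi_j(M^n)=[0,L]^n$ has $n$-volume $L^n$, so the same projected-cover bound forces $\HC_n(M^n)\geq(L/2)^n$; the reverse bound is immediate from the single ball of radius $L/2$ centered at the cube, so both sides of the inequality equal $(L/2)^{n+1}$, establishing $c(l^{n+1}_\infty,n)=1$. The main obstacle I expect is the careful verification of the identity $\HC_{n+1}(U)=2^{-(n+1)}\Vol(U)$, which amounts to equating the $l^\infty$-ball-cover description of Hausdorff content with the cube-cover description of Lebesgue outer measure; once this is in hand, the rest reduces to Loomis--Whitney together with the topological boundary identity $\pi_j(\Omega)=\pi_j(M^n)$.
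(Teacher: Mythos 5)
Your argument is correct and takes essentially the same route as the paper: both reduce to the Loomis--Whitney inequality applied to the $n+1$ coordinate projections of $\Omega$. The only stylistic difference is that you isolate the exact identity $\HC_{n+1}(U)=2^{-(n+1)}\Vol(U)$ for full-dimensional $U\subset l^{n+1}_\infty$ and invoke the continuous form of Loomis--Whitney, whereas the paper replays the discrete box-counting version of Loomis--Whitney via an approximation by fine lattice cubes.
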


Similarly, one can investigate the dependence of the constant $C(B,m)=Const(m)$ in the upper bound $Const(m)\ \HC_m(Y)^{1\over m}$ for the radius $R$ of $N_R(f(Y))$. 
The case when this Banach space is isomorphic to $l^\infty$ is especially interesting. As the metric balls in $l^\infty$ are cubes, the most natural
conjecture is that $c(l^\infty, m)=C(l^\infty, m)=1$.
We can be even more specific.

\begin{conjecture}

Let  $Y$ be a compact subset of $l^\infty$. For each positive $m$ and each positive $\epsilon$  there exists an $m$-filling $(O,K,\phi)$ of (the inclusion of ) $Y$ in $B$
such that
\par\noindent
(a) $\HC_{m+1}(O)\leq \HC_m(Y)^{m+1\over m}+\epsilon$;
\par\noindent
(b) For each $y\in Y$ $\Vert y-\phi(y)\Vert_B\leq \HC_m^{1\over m}(Y)$.
\end{conjecture}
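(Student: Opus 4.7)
Write $A = \HC_m(Y)$ and $R = A^{1/m}$, and fix $\epsilon > 0$. I would begin with a covering of $Y$ by open $l^\infty$-balls (coordinate cubes) $\{B(x_j, r_j)\}_{j=1}^N$ with $\sum_j r_j^m \le A + \eta$ for a small auxiliary $\eta$, together with the extra constraint $r_j \le R$ for every $j$. Since each $r_j^m \le \sum_k r_k^m \le A + \eta$, every $r_j$ is automatically at most $R(1+O(\eta))$; to enforce $r_j \le R$ exactly I would perform a cover replacement using Jung's theorem in $l^\infty$ (any set of $l^\infty$-diameter $d$ fits in a cube of radius $d/2$) and a Vitali-type pruning, replacing each offending ball by a cube of radius exactly $R$ covering the relevant piece of $Y$ and absorbing any leftover into a negligible cluster of small balls. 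This replacement is the single most delicate point and is what allows condition~(b) to hold with no $\epsilon$ loss.

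Set $O = \bigcup_j \operatorname{int} B(x_j, r_j)$. Each ball covers itself, so $\HC_{m+1}(O) \le \sum_j r_j^{m+1}$, and with $a_j := r_j^m$ and $A' := \sum_k a_k$ the inequality $a_j \le A'$ gives $a_j^{(m+1)/m} \le a_j \cdot (A')^{1/m}$, whence
$$\HC_{m+1}(O) \le \Bigl(\sum_j r_j^m\Bigr)^{(m+1)/m} \le (A + \eta)^{(m+1)/m} \le A^{(m+1)/m} + \epsilon,$$
for $\eta$ small enough, establishing~(a).

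For~(b) I would build $K$ and $\phi$ from the nerve $N$ of the covering. Since cubes in $l^\infty$ are products of intervals, a family of cubes is pairwise intersecting iff it has a common point (Helly number $2$), so $N$ is a flag complex and every simplex $[j_0,\ldots,j_k]$ of $N$ yields a non-empty cube $\bigcap_\ell B(x_{j_\ell}, r_{j_\ell})$. Realize $N$ geometrically in $l^\infty$ by sending vertex $[j]$ to $x_j$ and extending inductively over skeleta so that each simplex $\sigma$ maps into $\bigcap_{j \in \sigma} B(x_j, r_j)$ by coning from a coordinatewise center; let $K$ be the image of the $(\lceil m \rceil - 1)$-skeleton, so $K \subset O$. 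A partition of unity $\{\lambda_j\}$ subordinate to the cover induces $\bar\phi : Y \to |N|$ and, post-composed with the realization, a map $\phi : Y \to K$. For $y \in Y$ both $y$ and $\phi(y)$ lie in the common cube $\bigcap_{j : y \in B(x_j, r_j)} B(x_j, r_j)$; selecting $\phi$ by a coordinatewise median-type rule in this intersection cube (rather than the barycenter of the centers, which can exit $O$ already in three dimensions) forces $\|y - \phi(y)\|_\infty \le \min\{r_j : y \in B(x_j, r_j)\} \le R$, giving~(b). The straight-line homotopy $(1-t)y + t\phi(y)$ stays inside the same cube and witnesses the required homotopy from the inclusion $Y \hookrightarrow O$ to $\phi$.

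Two places where the $l^\infty$ structure is essential are the cover replacement (cubes can be trimmed coordinatewise) and the coordinatewise centering of $\phi$ (the intersection of cubes is itself a cube). I expect the former to be the principal obstacle: the analogous conclusion with an $\epsilon$ loss in~(b) already follows from the methods of Section~3 with little additional effort, while the sharp $\epsilon$-free form appears to require a genuinely cubical Vitali-type argument with no direct analogue in a general Banach space — consistent with the statement being a conjecture rather than a theorem.
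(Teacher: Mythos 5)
The statement you were asked to prove is labelled a Conjecture in the paper; the authors explicitly do not prove it, so there is no proof in the paper to compare your sketch against. Your sketch correctly identifies the nerve of a near-optimal cover by cubes as the natural approach, and your argument for condition~(a) is correct: with $a_j=r_j^m$ one has $a_j^{(m+1)/m}=a_j\cdot a_j^{1/m}\le a_j\cdot(\sum_k a_k)^{1/m}$, so $\sum_j r_j^{m+1}\le(\sum_j r_j^m)^{(m+1)/m}$. But the obstruction you single out---trimming the radii so that every $r_j\le\HC_m(Y)^{1/m}$---is not the central one. The central gap is the dimension of $K$: a near-optimal covering of $Y$ by cubes can have arbitrarily high multiplicity at a point (there is no Besicovitch-type bound uniform in $m$ in $l^\infty$), so the nerve $N$ need not have dimension $\le\lceil m\rceil-1$. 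The partition-of-unity map $\bar\phi\colon Y\to|N|$ generically lands in high-dimensional simplices, and post-composing with a geometric realization does not yield a map into $K$ (the realized $(\lceil m\rceil-1)$-skeleton), as you write. Reducing such a map to the low-dimensional skeleton is precisely what the pushout/Federer--Fleming arguments of Section~2 accomplish, but only at the cost of constants exponential in $m$, whereas condition~(b) demands the sharp constant $1$. To make your construction run you would first need a covering lemma producing cubes of radius $\le\HC_m(Y)^{1/m}$ with total $m$-power mass arbitrarily close to $\HC_m(Y)$ and multiplicity $\le\lceil m\rceil$. No such lemma is known, and this---rather than the cubical Vitali trimming---is the heart of why the statement remains a conjecture.

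There is also a secondary flaw in your distance estimate. Knowing that both $y$ and $\phi(y)$ lie in the cube $C_y=\bigcap_{j\,:\,y\in B_j}B_j$ only gives $\|y-\phi(y)\|_\infty\le\operatorname{diam}(C_y)\le 2\min_{j\,:\,y\in B_j}r_j$, not $\min r_j$ as you claim, and the ``coordinatewise median-type rule'' is not specified precisely enough to see why it improves this. The cleaner bound is the barycentric one: with $\phi(y)=\sum_j\lambda_j(y)x_j$ one gets $\|y-\phi(y)\|_\infty\le\sum_j\lambda_j(y)\|y-x_j\|_\infty\le\max_{j\,:\,\lambda_j(y)>0}r_j\le\HC_m(Y)^{1/m}$ once the radii are trimmed. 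That barycentre, however, lies in the full nerve simplex spanned by $\{j\,:\,y\in B_j\}$, not in its $(\lceil m\rceil-1)$-skeleton, which brings you straight back to the dimension problem above.
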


This conjecture might even be true for all ambient Banach spaces. The complex $K$ might arise as the nerve of a ``good" covering of $Y$ by metric balls
(or by smaller convex sets). To get a flavour of this conjecture, consider, for example,
a round  $2$-sphere of radius $1$ in $B=\mathbb{R}^3$. If one tries to cover it by very small balls,
as one does in order to determine the $2$-dimensional Hausdorff measure, the resulting upper bound for $\HC_2$ will be $4$ (the area of the sphere divided by the area of the  $2$-disc of radius 1.) However, the covering by one ball of radius $1$ yields
an upper bound equal to $1$. And, of course, the sphere contracts inside this ball
to a lower dimensional complex (e.g. the center of the ball).


If this conjecture is true, one can use the Kuratowski embedding to conclude that for each integer $m$
and each compact metric space $Y$ there exists a continuous map of $Y$ into 
an $(m-1)$-dimensional complex $K$ such that the inverse image of each point
can be covered by a metric ball of radius $\leq 2\HC_m(Y)^{1\over m}$ and that
$UW_{m-1}(Y)\leq 2\HC_m^{1\over m}(Y)$. This would  imply that a closed essential Riemannian manifold $M^n$ satisfies
the inequality 
$sys_1(M^n)\leq 4\HC_n^{1\over n}(Y)\leq {4\over V_n(1)^{1\over n}}vol^{1\over n}(M^n)= \sqrt{8\over \pi e}\sqrt{n}(1+o(1))vol^{1\over n}(M^n)$ 
(cf. \cite{GromovFilling} or \cite{N}).
Here $sys_1(M^n)$ denotes the minimal length of a non-contractible closed curve in $M^n$. We give Gromov's definition of essential manifolds in section 1.3 below. 
Not only the dependence on $n$ is optimal here, but looking at $\mathbb{R}\mathbb{P}^n$
with the canonical metric, we see that asymptotically the right hand side this inequality is within the
factor of $\pi$ from the optimal value. 
Thus, optimal (or even nearly optima) ``strengthened" isoperimetric inequalities for $l^\infty$ and $\HC_m$ would lead to a resolution of a long-standing conjecture in systolic geometry that asserts that the dimensional constant $c(n)$ in the right hand side of Gromov's systolic inequality behaves as $\sim\sqrt{n}$, when $n\longrightarrow\infty$.
The combinatorial nature of the problem and the extremely simple form of the conjectured optimal constants suggest that optimal isoperimetric inequalities for $\HC_m$ in $L^\infty$ might be easier to establish than their classical analogs for the Lebesgue measure.

\subsection{Inequalities relating Hausdorff content with Urysohn width.}

Let $X$ be a metric space, and $q$ a non-negative integer. One defines 
{\it $q$-dimensional Urysohn width} of $X$ as the infimum of $W$ such that there exist
 a $q$-dimensional simplicial complex $K$ and a continuous map $\pi: X \rightarrow K$ so that every fiber
$\pi^{-1}(s)$ has diameter $\le W$ in $X$. Intuitively, the Urysohn width $UW_{m-1}(X)$ measures
how well metric space $X$ can be approximated by 
an $(m-1)$-dimensional space.

As a part of our proof of Theorem \ref{isoperimetric0} we establish that:

\begin{theorem}\label{main2}
For each $m$ there exists a constant $c(m)$ such that each compact metric space $X$ satisfies the inequality
$$UW_{m-1}(X)\leq c(m)\HC_m^{1\over m}(X).$$
\end{theorem}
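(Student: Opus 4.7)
The plan is to deduce Theorem \ref{main2} directly from the $m$-filling existence assertion described at the end of the introduction, using the Kuratowski embedding as the bridge between the abstract metric space $X$ and a compact subset of a Banach space. Isometrically embed $X$ as $i(X)$ into $B := L^\infty(X)$. Because $i$ is isometric, any covering of $X$ by metric balls $B_X(x_j, r_j)$ pushes forward to a covering of $i(X)$ by balls $B_B(i(x_j), r_j)$ of the same radii, hence $\HC_m(i(X)) \leq \HC_m(X)$, so it suffices to bound $UW_{m-1}(X)$ in terms of $\HC_m(i(X))$.

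Next, apply the $m$-filling existence statement to the inclusion $i: X \hookrightarrow B$. Since $m$ is a positive integer (as $UW_{m-1}$ requires $m-1$ to be a non-negative integer), we have $\lceil m \rceil - 1 = m-1$, so the filling produces a continuous map $\phi: i(X) \to K$ into an $(m-1)$-dimensional simplicial complex $K$ satisfying
$$\|i(x) - \phi(i(x))\|_B \leq \mathrm{Const}(m)\, \HC_m^{1/m}(i(X))$$
for every $x \in X$. Set $\pi := \phi \circ i : X \to K$. For any $s \in K$ and any two preimages $x_1, x_2 \in \pi^{-1}(s)$, the isometric property of $i$ together with the triangle inequality in $B$ yields
$$d_X(x_1, x_2) = \|i(x_1) - i(x_2)\|_B \leq \|i(x_1) - s\|_B + \|s - i(x_2)\|_B \leq 2\,\mathrm{Const}(m)\, \HC_m^{1/m}(X).$$
Thus every fiber of $\pi$ has diameter at most $2\,\mathrm{Const}(m)\, \HC_m^{1/m}(X)$ in $X$, giving $UW_{m-1}(X) \leq 2\,\mathrm{Const}(m)\, \HC_m^{1/m}(X)$, which is the desired inequality with $c(m) = 2\,\mathrm{Const}(m)$.

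In this argument the real content is entirely absorbed into the $m$-filling theorem: it simultaneously supplies the low-dimensional target $K$ and the proximity bound between $i$ and its deformation $\phi$. Once these are available, the Kuratowski embedding converts ambient Banach-space proximity into intrinsic diameter control, and the Urysohn width bound follows by triangle inequality alone. The main obstacle is therefore not this deduction but the construction of the $m$-filling itself, which relies on Theorem \ref{isoperimetric0} (so that the extension has small $\HC_{m+1}$) together with the assertion that a nearly minimal extension can be arranged to have its image covered by a small neighbourhood of an $(m-1)$-dimensional skeleton; presenting this skeleton as the target $K$ is what allows the formal deduction above to go through.
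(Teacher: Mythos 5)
Your proposal is correct and follows essentially the same route as the paper: Kuratowski embedding into $L^\infty(X)$, invocation of the result that the inclusion can be pushed to an $(m-1)$-dimensional simplicial complex $K$ with each point moving at most $\mathrm{Const}(m)\,\HC_m^{1/m}(X)$, and then a triangle-inequality computation of fiber diameters for $\pi = \phi\circ i$. The paper carries out the same deduction in subsection 3.3 (where it calls the step ``immediately yields'' Theorem \ref{main2}), with the $m$-filling map supplied by the combination of Lemma \ref{moving Y} and the projection/Federer–Fleming argument — precisely the machinery you have correctly identified as carrying the real content.
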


This result was first conjectured by L. Guth in \cite[Question 5.2]{Guth_Urysohn}.  In fact, Guth conjectured a stronger assertion that was
established in the first version of this paper (\cite{LLNR1}).
Recall, that metric space $X$ is called {\it boundedly compact}, if each closed metric ball in $X$ is compact
or, equivalently, each closed and bounded subset of $X$ is compact.



\begin{theorem} \label{main}
For each positive integer $m$ there exists $\epsilon_m>0$, such that the following holds.
If $X$ is a boundedly compact metric space and there exists a radius $R$, such that every ball of 
radius $R$ in $X$ has $m$-dimensional Hausdorff content less than $\epsilon_m R^m$,
then $UW_{m-1}(X) \leq R$. 
\end{theorem}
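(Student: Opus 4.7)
The plan is to produce a continuous map $\pi : X \to K$ to an $(m-1)$-dimensional simplicial complex $K$ with every fiber of diameter $\leq R$. I would embed $X$ isometrically into $B = L^\infty(X)$ via the Kuratowski embedding, so that the $m$-filling refinement of Theorem~\ref{isoperimetric0} described in the paragraph preceding Corollary~\ref{codimone} applies. The hypothesis then says that every closed ball of radius $R$ in $X$ (viewed as a compact subset of $B$) admits an $m$-filling into an $(m-1)$-complex contained in a neighborhood of radius $Const(m)\,\epsilon_m^{1/m} R$, which is much smaller than $R$ once $\epsilon_m$ is chosen small.

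Choose a small constant $\delta = \delta(m) > 0$ and, using bounded compactness, a maximal $\delta R$-separated net $\{x_i\} \subset X$. The balls $B_i := B(x_i, \delta R)$ form a cover of $X$ whose nerve $N$ is locally finite, and for every simplex $\sigma = \{i_0, \dots, i_k\}$ of $N$ the intersection $Y_\sigma := \bigcap_j \overline{B(x_{i_j}, \delta R)}$ sits inside a ball of radius $\leq 2\delta R \ll R$, so $\HC_m(Y_\sigma) < \epsilon_m R^m$ by hypothesis.

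Starting from the standard partition-of-unity map $\pi_0 : X \to |N|$ (whose fibers over the closed simplex $\sigma$ lie inside $Y_\sigma$), I would inductively modify $\pi_0$ over simplices of $N$ in order of decreasing dimension, replacing the portion of the current map over each top simplex $\sigma$ of dimension $\geq m$ by an $m$-filling of the data already constructed on $\partial \sigma$. Each such filling lands in an $(m-1)$-dimensional complex $K_\sigma$ contained in a $Const(m)\,\epsilon_m^{1/m} R$-neighborhood of $Y_\sigma$. After finitely many dimensions the resulting map takes values in an $(m-1)$-complex $K$ obtained by gluing the $K_\sigma$ along shared faces, and any fiber of the final $\pi$ is contained in a set of diameter at most $2\delta R + 2\,Const(m)\,\epsilon_m^{1/m} R$, which is $\leq R$ for $\delta$ and $\epsilon_m$ chosen small enough depending on $m$.

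The main obstacle is the coherent gluing: the $m$-fillings on different simplices of $N$ must agree on common faces, so the induction has to be carried out relative to the already-filled skeleton. This is why one must invoke the extension form of the $m$-filling theorem (which lets one fill while prescribing the boundary map) rather than Theorem~\ref{main2} alone. A further delicate point is that in a general boundedly compact space the local dimension of $N$ need not be bounded, so the inductive reduction has to be performed simplex-by-simplex on a possibly infinite nerve, with care that the geometric constants accumulated at each step remain under control and do not force a larger value of $\epsilon_m$ than the one fixed at the start.
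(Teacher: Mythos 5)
First, note that this version of the paper does not actually contain a proof of Theorem~\ref{main}: the authors explicitly removed it, referring the reader to the first arXiv version \cite{LLNR1} and to the subsequent shorter proofs of Papasoglu \cite{P} and Nabutovsky \cite{N}. What is proved here is only the compact case, Theorem~\ref{main2}, by iterated ``chopping off thin fingers'' (Lemmas~\ref{stepsY} and~\ref{moving Y}) followed by a projection onto a finite-dimensional subspace and a Federer--Fleming step. Your proposal is therefore necessarily a different route; in spirit it is closest to Guth's nerve-of-a-good-covering argument from \cite{Gu1}, \cite{Guth_Urysohn}, not to anything in the present text.

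The proposal has a genuine gap at the point you flag as a ``further delicate point,'' and I do not think it can be fixed merely ``with care.'' A maximal $\delta R$-separated net in a boundedly compact metric space does give a locally finite cover, but nothing in the hypothesis bounds its multiplicity: the condition $\HC_m(B(x,R)) < \epsilon_m R^m$ does not prevent a ball of radius $2\delta R$ from containing arbitrarily many $\delta R$-separated points (for example, a finite set of $N$ pairwise-equidistant points has $\HC_m=0$ for every $N$, as it can be covered by $N$ singleton balls of arbitrarily small radius). Consequently the local dimension $D$ of the nerve $N$ is uncontrolled. Each of your dimension-reduction stages introduces a displacement of order $Const(m)\,\epsilon_m^{1/m} R$, so after roughly $D-m$ stages the cumulative displacement is of order $D\,\epsilon_m^{1/m} R$, which cannot be kept below $R/2$ by any $\epsilon_m$ fixed in advance once $D$ is unbounded. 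Constructing a covering with a universally bounded multiplicity is precisely the technically demanding ingredient of Guth's proof in \cite{Gu1}, \cite{Guth_Urysohn}; replacing it with a maximal separated net discards the key estimate. The coherent-gluing issue you raise is comparatively benign --- the extension form of Theorem~\ref{isoperimetric}, applied relative to the already-filled skeleton and in increasing order of dimension, should handle it --- but without a multiplicity bound the induction does not close, so the proposal as written is not yet a proof.
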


%
As observed by Guth in \cite{Guth_Urysohn}, Theorem \ref{main2} 
can be viewed as a quantitative version of the classical Szpilrajn theorem asserting that each compact metric space with zero $m$-dimensional Hausdorff measure has Lebesgue covering dimension $\leq m-1$. Indeed,
if the $m$-dimensional Hausdorff measure of $X$ is equal to zero, then, as $\HC_m$ does not exceed the $m$-dimensional Hausdorff measure, $\HC_m(X)$ also must be equal to zero. Now Theorem \ref{main2} implies that $UW_{m-1}(X)=0$,
which 
implies that the covering dimension of $X$ is at most $m-1$ (see the proof of Lemma 0.8 in \cite{Guth_Urysohn}).
Also, note that if $X$ is a closed $m$-dimensional Riemannian manifold, then Theorem \ref{main2} improves the
well-known Gromov's inequality relating the volume of a closed Riemannian manifold and its filling radius (as according to Gromov the filling radius 
does not exceed ${1\over 2}UW_{m-1}$ -- see \cite{GromovFilling}, pp. 128-129, where $UW_{m-1}$ is denoted as $Diam'_{m-1}$).

\begin{figure}
   \centering	
	\includegraphics[scale=0.7]{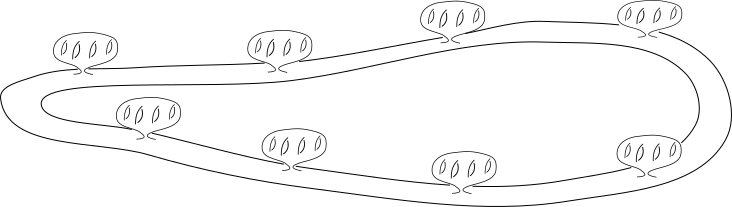}
	\caption{Surface $\Sigma$ is a connected sum of a thin long torus
	and many copies of a surface of very large area and very small diameter.
	The area of every metric ball of radius $1$ in $\Sigma$ is large,
	but 2-dimensional Hausdorff content is small. By Theorem \ref{main} the Urysohn width $UW_1(\Sigma)$ is also small.}
	\label{fig:example}
\end{figure}

Theorem \ref{main} generalizes 
a result of Guth in \cite{Guth_Urysohn}, where $X$ is assumed to be 
an $m$-dimensional Riemannian manifold, and $\HC_m$ replaced
by the volume.
This result has been previously conjectured by M. Gromov, and for a long time had been an open problem.
Guth's proof is based on a clever construction of a covering by balls with controlled overlap from
his previous paper \cite{Gu1} and also uses S. Wenger's simplified version (\cite{W})
of Gromov's proof of J.Michael-L.Simon isoperimetric inequality and its generalizations (\cite{GromovFilling}). 

If $X$ is compact, then choosing $R=({\HC_m(X)\over \epsilon_m'})^{1\over m}$ and denoting
${1\over (\epsilon_m')^{1\over m}}$ as $c(m)$ we see that Theorem \ref{main} implies Theorem \ref{main2}.

The proof of Theorem \ref{main} in \cite{LLNR1} is considerably longer than
the proofs of Theorems \ref{isoperimetric0} and \ref{main2} in the present paper.
Soon after \cite{LLNR1} was posted on arXiv, Panos Papasoglu learned about it from
the talk of one of the authors and our conversations with him during a conference at Barcelona. 
Later he came up with a much shorter proof of Theorem \ref{main} than ours (\cite{P}). While his
proof uses some technical and philosophical ideas from our work, its central idea
is different. He replaced Gromov's ``chopping off of thin fingers" argument (that we use in the present paper)
by an adapatation of Schoen-Yau style approach. (Note that Schoen-Yau approach was first introduced to systolic geometry by Guth; \cite{Gu2}). A later paper of one of the authors (see section 3 of \cite{N}) contains a simplification of Papasoglu's proof that not only proves Theorem \ref{main} on three pages but also yields an explicit linear upper bound for $\epsilon_m^{1\over m}$ (while 
our work or \cite{P} yielded only exponential upper bounds for $\epsilon_m^{1\over m}$).
This makes our original proof of Theorem \ref{main} obsolete, and we decided to
exclude from the present paper all sections that are not required for the proof of Theorem \ref{isoperimetric0} (the proof of
Theorem \ref{main2} is embedded in the proof
of Theorem \ref{isoperimetric0}).

On the other hand, the approach of \cite{P} does not seem applicable to prove our isoperimetric inequality, Theorem \ref{isoperimetric0}.
Moreover, one of the authors tried to combine the idea of \cite{P}
with the ideas of the present paper in order to either simplify
our proof or obtain a better value of the constant $const(m)$ in
Theorem \ref{isoperimetric0}, but was not able to do that.


\subsection{New systolic 
inequalities.}
We observed that Theorem \ref{main2} has the following corollary.

We first give the following definition, which
generalizes the notion of ``essential" polyhedra on p. 139 of \cite{GromovFilling}.

\begin{definition}
A CW-complex $X$
is $m$-essential 
if the classifying map 
$Q:X\longrightarrow K(\pi_1(X),1)$
is not homotopic to a map 
$Q'$ that factors through a map to an $(m-1)$-dimensional
$CW$-complex. 
\[
\begin{tikzcd}
X \ar[r,""] \ar[rr,bend right,"Q'"'] & P^{m-1} \ar[r,""] & K(\pi_1(X),1)
\end{tikzcd}
\]
\end{definition}

An $n$-dimensional manifold $M$ is called essential, as defined in \cite{GromovFilling}, 
 if there exists a coefficient group $G$, such that the homomorphism $Q_{*n}:H_n(M^n; G)\longrightarrow H_i(K(\pi_1(M^n),1);G)$
induced by the classifying map $Q:X\longrightarrow K(\pi_1(X),1)$ is non-trivial (that is, has a non-zero image).
According to definition above every 
essential manifold is $n$-essential.
Also, to prove that a manifold is $m$-essential
it is enough to find a coefficient group $G$ and $i \geq m$, such that one of the homomorphisms $Q_{*i}:H_i(X; G)\longrightarrow H_i(K(\pi_1(X),1);G)$ 
induced by the classifying map $Q:X\longrightarrow K(\pi_1(X),1)$ is non-trivial.

Recall that a metric is called a length metric, if the distance between each pair
of points is equal to the infimum of lengths of paths connecting these points, and a metric space such that its metric is a
length metric is called a length space.

\begin{theorem}
For each positive integer $m$ there exist a constant $C(m)$ with the following property.
Let $X$ be an $m$-essential
finite CW-complex endowed with a length metric.
Then
$$sys_1(X)\leq C(m)\HC_m^{1\over m}(X).\ \ \ \ \ \ (*)$$
\end{theorem}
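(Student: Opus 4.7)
The plan is to derive the inequality from Theorem~\ref{main2} by a standard covering-theoretic factorization of the classifying map, of the type used by Gromov in his original proof of the systolic inequality for essential manifolds. First, apply Theorem~\ref{main2} to the length space $X$ to obtain a continuous map $\pi:X\to K^{m-1}$ to a simplicial complex of dimension at most $m-1$ such that every fibre $\pi^{-1}(s)$ has diameter at most $W:=c(m)\HC_m^{1/m}(X)$ in $X$. Set $C(m):=3c(m)$ and suppose towards a contradiction that $sys_1(X)>C(m)\HC_m^{1/m}(X)=3W$. We aim to show that under this hypothesis the classifying map $Q:X\to K(\pi_1(X),1)$ factors through an $(m-1)$-dimensional complex up to homotopy, contradicting $m$-essentiality.

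Next, replace $K^{m-1}$ by $\pi(X)$ (which is again a finite complex of dimension $\le m-1$), take a sufficiently fine iterated barycentric subdivision, and consider the cover $\{U_\alpha\}$ of $\pi(X)$ by open stars of its vertices. The nerve of this cover is canonically isomorphic to the subdivided complex, hence has dimension $\le m-1$. Pulling back, set $V_\alpha:=\pi^{-1}(U_\alpha)$. Since $X$ is a compact length space and $\pi$ is uniformly continuous, by taking a sufficiently fine subdivision we can arrange that $\diam V_\alpha < W+\epsilon < sys_1(X)$ for every $\alpha$. The nerve $N$ of $\{V_\alpha\}$ is a subcomplex of the nerve of $\{U_\alpha\}$, so $\dim N\le m-1$.

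Since $\diam V_\alpha<sys_1(X)$ in the length metric, every loop in $V_\alpha$ has length less than $sys_1(X)$ and is therefore contractible in $X$; the same holds for any nonempty intersection of the $V_\alpha$, whose diameter is at most the maximum of the diameters. Let $\varphi:X\to N$ be the map defined by a partition of unity subordinate to $\{V_\alpha\}$. We build a map $\psi:N\to K(\pi_1(X),1)$ by sending each vertex $v_\alpha$ to $Q(x_\alpha)$ for a chosen $x_\alpha\in V_\alpha$, extending over each edge $[v_\alpha,v_\beta]$ by choosing a path in $V_\alpha\cup V_\beta$ joining $x_\alpha$ to $x_\beta$ and applying $Q$. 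The boundary loop of each $2$-simplex $[v_\alpha,v_\beta,v_\gamma]$ lies in $V_\alpha\cup V_\beta\cup V_\gamma$ (again of diameter less than $sys_1$) and is therefore contractible in $X$, so $\psi$ extends over the $2$-skeleton; vanishing of higher homotopy groups of $K(\pi_1(X),1)$ extends it over all of $N$. A standard argument shows $\psi\circ\varphi\simeq Q$, contradicting $m$-essentiality.

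The main obstacle, and the step requiring the most care, is verifying the factorization $\psi\circ\varphi\simeq Q$ while simultaneously keeping the nerve $(m-1)$-dimensional; this relies crucially on the fact that the cover $\{V_\alpha\}$ is the pullback of a cover of the $(m-1)$-dimensional complex $\pi(X)$, so that intersections of the $V_\alpha$ are controlled by intersections of the $U_\alpha$ and no higher-dimensional simplices appear in $N$. Everything else is either uniform continuity on the compact space $X$ or a routine obstruction-theoretic extension using the triviality of $\pi_i(K(\pi_1(X),1))$ for $i\ge 2$ together with the local contractibility of loops of length less than $sys_1(X)$.
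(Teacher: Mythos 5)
Your nerve-of-a-cover construction is a genuine alternative to the paper's route, which embeds $X$ into $L^\infty(X)$ via the Kuratowski map, uses Gromov's Appendix~1 Proposition (D) to produce an $(m-1)$-degenerate map $g$ within $\frac12 UW_{m-1}$ of the embedding, and works with the mapping cylinder of the factorization $g_1:X\to K^{m-1}$. Both arguments reduce to the same obstruction-theoretic core — extend $Q$ over a collar between $X$ and an $(m-1)$-complex, contracting short loops at the $2$-skeleton and using asphericity of $K(\pi_1(X),1)$ above — but yours stays entirely inside $X$, avoiding the $L^\infty$ embedding. The embedding is precisely what lets the paper control distances by the "nearest point in $f(X)$" device and deliver the constant $3$ cleanly.

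There is, however, a genuine gap in your contractibility step. You assert that a loop lying in a set of diameter less than $sys_1(X)$ is contractible, and that every loop in $V_\alpha$ has length less than $sys_1(X)$; both are false. A loop inside a small set can wind arbitrarily and have unbounded length, and a shortest noncontractible geodesic of length $sys_1(X)$ is contained in a set of diameter at most $sys_1(X)/2$ (take the short meridian of the flat torus $\mathbb{R}^2/(\mathbb{Z}\oplus\epsilon\mathbb{Z})$: its diameter is $\epsilon/2<\epsilon=sys_1$, yet it is noncontractible). What the argument actually requires is a bound on the \emph{length} of the specific loop $\partial\psi([v_\alpha,v_\beta,v_\gamma])$. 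If you send the edge $[v_\alpha,v_\beta]$ to a geodesic from $x_\alpha$ to $x_\beta$, then picking $z\in V_\alpha\cap V_\beta$ gives $d(x_\alpha,x_\beta)\le\diam V_\alpha+\diam V_\beta\le 2(W+\epsilon)$, hence the triangle has length at most $6(W+\epsilon)$, and the contradiction hypothesis should read $sys_1(X)>6W$, not $3W$. (One can recover the constant $3$ by routing each edge through a chosen $z_{\alpha\beta}\in V_\alpha\cap V_\beta$ and decomposing the resulting hexagonal loop into four triangles each of whose three vertices lie in a single $V_\bullet$, so each has perimeter $\le 3(W+\epsilon)$ — but as written neither version is what you did.) Since the theorem only asks for \emph{some} $C(m)$, the corrected argument still proves the statement, but the claimed $C(m)=3c(m)$ is not justified by the text you wrote.

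Separately, the sentence "a standard argument shows $\psi\circ\varphi\simeq Q$" is the step the whole contradiction hangs on and cannot be left implicit. The cleanest route is the paper's own: pass to the mapping cylinder $W_\varphi$ of $\varphi:X\to N$, note that $X\hookrightarrow W_\varphi$ is a cofibration and $N$ is a deformation retract of $W_\varphi$, triangulate $W_\varphi$ finely, and extend $Q$ skeleton by skeleton (vertices to nearby points of $X$, edges to short geodesics, $2$-cells by short-loop contractibility, higher cells by asphericity of $K(\pi_1(X),1)$). With such an extension $\tilde Q$ in hand, $Q=\tilde Q|_X\simeq\tilde Q|_N\circ\varphi$, which is the factorization through the $(m-1)$-complex $N$ that contradicts $m$-essentiality. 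Until this is spelled out (or replaced by some other proof of the factorization), the argument is not complete.
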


\par\noindent
{\bf Remarks.} 
1. Compact $m$-essential Riemannian manifolds with or without boundary of dimension $n\geq m$ constitute the most obvious example of path metric spaces satisfying the conditions of the theorem. 
\par\noindent
2. If $m$ is the dimension of $X$, then this inequality
improves Theorem $B'_1$ on p. 139 of \cite{GromovFilling} (as $\HC_n(X)\leq V_n(1) vol(X)$).
When $X$ is a closed $m$-dimensional Riemannian manifold, this inequality is a strengthening of the famous Gromov systolic inequality $sys_1(M^n)\leq c(n)vol^{1\over n}(M^n).$
\par\noindent
3. Observe that if $k<m$ then $HC_k^{1\over k}(X)\geq HC_m^{\frac{1}{m}}(X),$
(see Lemma \ref{km} below).
Therefore, disregarding the constants $c(m)$, these inequalities become stronger as $m$ increases.
On the other hand, the assumption that $X$ is $m$-essential also becomes stronger when $m$ increases.
\par\noindent

We are going to prove this theorem in section 4. The proof combines the ideas from \cite{GromovFilling} with Theorem \ref{main2}.

{\bf Examples.}
1. If $E^m$ is an essential closed manifold (in the sense of \cite{GromovFilling}), then
any product $E^m\times N^{n-m}$ with a closed manifold, or a connected sum
$E^m\times N_1^{n-m}\# N_2^n$ is $m$-essential and satisfies the systolic inequality
from the previous theorem. For example, if $M^8$ is a Riemannian manifold 
diffeomorphic to $T^3\times S^5\# S^4\times S^4$, then $sys_1(M^8)\leq c(3)\HC_3(M^8)^{1\over 3}.$
\par\noindent
2. 
Observe that if $X$ is {\it not} $2$-essential, then the classifying map factors through some $1$-complex, and, therefore, $\pi_1(X)$ is free.
Therefore, if $X$ is a nonsimply-connected compact length space homeomorphic to a finite $CW$-complex, and $\pi_1(X)$ is {\it not} free, then
$sys_1(M^n)\leq C(2)\sqrt{\HC_2(M^n)}.$ 

The inequalities (*) can be restated as the assertion that there exists
a constant $b(m)>0$ such that if $\HC_m(X)\leq b(m)$ for an $m$-essential
$X$, then $sys_1(X)\leq 1$. Of course, Theorem \ref{main} implies more,
namely, 
that the assumption about the Hausdorff content of the whole manifold can be replaced by the assumption about all metric balls of radius $1$:
%
%
%
Also, as Theorem \ref{main} holds for boundedly compact metric spaces,
the previous theorem can be immediately generalized for locally finite CW-complexes.
In particular, it holds for $m$-essential complete non-compact Riemannian manifolds
with or without boundary.

\begin{theorem}
For each integer $m\geq 2$ there exists $b(m)>0$ with the following property:
Let $X$ be an $m$-essential
boundedly compact length space homeomorphic to a CW-complex. 
If for some $R>0$ for each metric ball $B$ of
radius $R$ in $X$ $\HC_m(B)\leq b(m)R^m$, then $sys_1(X)\leq R$.
\end{theorem}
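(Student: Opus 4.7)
The plan is to argue by contrapositive: assuming $sys_1(X) > R$, we exhibit a factorization, up to homotopy, of the classifying map $Q\colon X \to K(\pi_1(X),1)$ through an $(m-1)$-dimensional CW complex, contradicting the $m$-essentiality of $X$. This mirrors the proof of the previous theorem for finite $m$-essential CW complexes; the only change is that Theorem \ref{main2} is replaced by its scale-invariant local version, Theorem \ref{main}, whose hypotheses are precisely the ones we are given.

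Set $b(m):=\epsilon_m/D^m$ for a dimensional constant $D>3$ (say $D=4$), where $\epsilon_m$ is the constant from Theorem \ref{main}. By the inclusion $B(x,R/D)\subset B(x,R)$, the hypothesis yields $\HC_m(B(x,R/D))\leq b(m)R^m=\epsilon_m(R/D)^m$ for every $x\in X$, so Theorem \ref{main} applied at scale $R/D$ produces a continuous map $\pi\colon X\to K$ into an $(m-1)$-dimensional simplicial complex $K$ whose fibers have diameter at most $R/D$ in $X$. Each fiber is closed and bounded, hence compact by bounded compactness of $X$; combining this with continuity of $\pi$ and passing to a sufficiently fine subdivision of $K$, one arranges that $\pi^{-1}(\sigma)$ has diameter at most $R/D+\delta$ in $X$ for every closed simplex $\sigma$ of $K$, where $\delta>0$ is arbitrarily small.

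One then constructs $P\colon K\to K(\pi_1(X),1)$ with $P\circ\pi\simeq Q$ by induction over the skeleta. For each vertex $v$, pick $x_v\in\pi^{-1}(v)$ and set $P(v):=Q(x_v)$. For each edge $[v,w]$, choose a path in $X$ from $x_v$ to $x_w$ of length at most $R/D+2\delta$ (possible since $X$ is a length space and $\pi^{-1}([v,w])$ has diameter at most $R/D+\delta$) and let $P$ send the edge to the $Q$-image of this path. For each $2$-simplex, the resulting boundary loop in $X$ has length at most $3(R/D+2\delta)<R<sys_1(X)$, hence is contractible in $X$, and a nullhomotopy extends $P$ across the $2$-simplex. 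For simplices of dimension $\geq 3$ the obstructions lie in $\pi_k(K(\pi_1(X),1))=0$ for $k\geq 2$, so extension is automatic. That $P\circ\pi\simeq Q$ then follows from the standard fact that maps into an Eilenberg-MacLane space are homotopy-classified by their induced map on $\pi_1$, together with the observation that the two maps agree on $\pi_1$ by construction (loops in $X$ of length $\leq R$ being contractible).

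The principal technical obstacle is the uniform diameter control on preimages of simplices of $K$: in the compact case treated previously this is essentially automatic from the compactness of $X$, whereas in the boundedly compact setting one must combine compactness of individual fibers with the CW topology on $K$ and subdivide appropriately to obtain a uniform bound. Once this is in place, the construction of $P$ is the same skeleton-by-skeleton argument used in the proof of the previous theorem, and the resulting factorization of $Q$ through the $(m-1)$-dimensional complex $K$ contradicts the $m$-essentiality of $X$, completing the argument.
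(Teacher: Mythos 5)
The proposal has a genuine gap, and moreover the route it takes is precisely the one the paper's use of the Kuratowski embedding is designed to avoid.

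Your central claim — that ``combining [compactness of individual fibers] with continuity of $\pi$ and passing to a sufficiently fine subdivision of $K$, one arranges that $\pi^{-1}(\sigma)$ has diameter at most $R/D+\delta$ for every closed simplex $\sigma$'' — is not justified, and in the boundedly compact (non-compact) setting it is in fact false. The Urysohn width hypothesis only bounds $\diam\bigl(\pi^{-1}(s)\bigr)$ for each individual point $s\in K$, not $\diam\bigl(\pi^{-1}(U)\bigr)$ for small neighbourhoods $U$. For compact $X$ one can upgrade the former to the latter by a sequential compactness argument (a sequence $x_n\in\pi^{-1}(s_n)$ with $s_n\to s$ has a convergent subsequence, whose limit lies in $\pi^{-1}(s)$). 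For merely boundedly compact $X$ this fails: one can build a connected boundedly compact length space $X$ and a continuous $\pi:X\to K$ with all fibers of diameter $\le 1$ for which $\pi^{-1}$ of arbitrarily small neighbourhoods of some limit point of $\pi(X)$ is unbounded (take a sequence of unit intervals wandering off to infinity, graded by a parameter tending to $0$, and joined by short arcs). No subdivision of $K$ fixes this, and it is exactly the compact/boundedly-compact dichotomy that makes this step the crux of the theorem, not a technicality. The same problem appears in the next step: adjacent vertex fibers $\pi^{-1}(v)$ and $\pi^{-1}(w)$ need not contain points at mutual distance close to $R/D$, so the chosen $x_v,x_w$ may admit no short connecting path.

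The paper sidesteps all of this by never attempting to control $\pi^{-1}(\sigma)$ at all. Following Gromov, it passes to the Kuratowski embedding $f:X\hookrightarrow L^\infty(X)$, finds an $(m-1)$-degenerate map $g=g_2\circ g_1$ with $\|f-g\|_\infty\le\frac12 UW_{m-1}(X)$, and works with the mapping cylinder $W$ of $g_1$ mapped into $L^\infty(X)$. The key point is that every point of the image of $W$ lies within $\frac12 UW_{m-1}(X)$ of the isometrically embedded $f(X)$; this ``narrowness'' of $W$ in the ambient Banach space replaces your missing uniform control. One then finely triangulates $W$ (not $K$), sends new vertices to nearby points of $f(X)$ and thence via $Q$ into $K(\pi_1,1)$, sends edges to short geodesics, and so on. The homotopy $Q\simeq(\text{map factoring through }K)$ comes for free from the deformation retraction of $W$ onto $K$, whereas your final step — that $P\circ\pi\simeq Q$ because they agree on $\pi_1$ — requires an argument you have not given: you would need to show that $(P\circ\pi)_*$ is (a conjugate of) the identity on $\pi_1(X)$, which is not obvious from your pointwise construction. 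In short, the proposal replaces the paper's robust embedding/mapping-cylinder mechanism with an unsubstantiated claim about subdivisions, and that claim is exactly what fails in the boundedly compact case this theorem is about.
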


\par\noindent
{\bf Example.} Consider a complete Riemannian metric on $T^2\times \mathbb{R}^N$. If $\HC_2$ of each metric
ball of radius $1$ does not exceed $b(2)$, then there exists a non-contractible closed
curve of length $\leq 1$.

\subsection{Hausdorff content: basic properties and some examples.}

In this section we list some properties of the
Hausdorff content for compact metric spaces that immediately follow from the definition:

\begin{enumerate}
    \item  Subadditivity. $\HC_m(\bigcup_i A_i)\leq\sum_i\HC_m(A_i).$

\item Good behaviour under Lipschitz maps.
If $f:X\longrightarrow Z$ is an $L$-Lipschitz map, and
$Y\subset X$, then $\HC_m(f(Y))\leq L^m\HC_m(Y).$ 

Applying this to the
inclusion map of $Y$ into $X$ (which is,
obviously, $1$-Lipschitz) we obtain:

\item Monotonicity. If $Y\subset X$,
then $\HC_m(Y)\leq \HC_m(X)$.

\item Rescaling. If $A$ is a subset of a Banach space, and $\lambda$ is a scalar, then $\HC_m(\lambda A)=\vert\lambda\vert^m\HC_m(A).$

\item For each $m$ we have $\HC_m(X)\leq rad^m(X)\leq  diam^m(X)$. 

Indeed, a compact metric space $X$ can always be covered by one metric ball of radius $diam(X)$.

\item Note the following basic inequality relating Hausdorff contents in different dimensions:
\begin{lemma} \label{km} If $m>k$ then $\HC_k^{1\over k}(X)\geq \HC_m^{1\over m}(X)$. 
\end{lemma}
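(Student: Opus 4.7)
My plan is to compare the two Hausdorff contents by exploiting the same ball cover for both, using the fact that in a cover witnessing a small value of $\HC_k$ every individual radius is automatically small.

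The key observation is the following: fix any countable cover of $X$ by balls $B(x_i,r_i)$ with $\sum_i r_i^k = S$. Then for each single index $i$ we have $r_i^k \le S$, hence $r_i \le S^{1/k}$, and so
$$r_i^{m-k} \le S^{(m-k)/k}$$
since $m-k>0$. Multiplying by $r_i^k$ and summing over $i$ gives
$$\sum_i r_i^m \;=\; \sum_i r_i^k\, r_i^{m-k} \;\le\; S^{(m-k)/k}\sum_i r_i^k \;=\; S^{(m-k)/k}\cdot S \;=\; S^{m/k}.$$
By the definition of Hausdorff content, the same cover witnesses $\HC_m(X) \le \sum_i r_i^m \le S^{m/k}$.

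Taking the infimum over all admissible covers (so that $S$ tends to $\HC_k(X)$, or using an $\epsilon$-minimizing cover), I get $\HC_m(X) \le \HC_k(X)^{m/k}$, and raising both sides to the power $1/m$ yields the desired inequality $\HC_m(X)^{1/m} \le \HC_k(X)^{1/k}$.

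I do not anticipate any real obstacle; the only mild subtlety is ensuring that we may restrict attention to covers with $\sum r_i^k$ finite (otherwise the inequality is vacuous) and handling the infimum passage, both of which are routine. No compactness or finite-dimensionality hypothesis is needed beyond what is already built into the definition of $\HC$.
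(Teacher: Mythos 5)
Your proof is correct and is essentially the paper's argument. The paper passes from $\sum_i (r_i^k)^{m/k}$ to $(\sum_i r_i^k)^{m/k}$ by invoking the elementary inequality $\sum_i a_i^p \le (\sum_i a_i)^p$ for $p = m/k > 1$, which is exactly what your bound $r_i^{m-k}\le S^{(m-k)/k}$ followed by summation proves inline; the two write-ups differ only in whether that subadditivity-of-powers step is cited or re-derived.
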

Indeed, choose
a finite covering of $X$ by metric balls with radii $r_i$ so that $\sum_i r_i^k\leq\HC_k(X)+\epsilon$ for an arbitrarily small $\epsilon$. Now
$\sum_ir_i^m=\sum_i(r_i^k)^{m\over k}\leq (\sum_ir_i^k)^{m\over k}\leq (\HC_k(X)+\epsilon)^{m\over k}$. Now the desired inequality follows when we take $\epsilon\longrightarrow 0$.

\item Consider the Euclidean $2$-ball $B$ of
radius $1$. We already know that its $\HC_1$ cannot exceed $1$, but, in fact,
it is equal to $1$, as the sum of radii of any collection of balls covering a
diameter of $B$ (that has length $2$) should be at least $1$. 

\item $\HC_m$ is {\bf not additive}. Indeed,
cut the ball $B$ in the previous example into two halves $H_1$, $H_2$ along a diameter. Note, that $\HC_1(H_i)$
is still equal to $1$. So, when we 
remove one of these halves, the values
of $\HC_1$ does not decrease. Moreover, if we will take the union of the remaining half with a metric ball of arbitrarily small radius centered at one of two points of the diameter, the value of $\HC_1$ of the union will, in fact,
become greater than $1$. This example
illustrates difficulties that one can encounter trying to make the value of $\HC_m$ of a metric space smaller by cutting out its subsets and replacing them by subsets with a smaller value of $\HC_m$.

\item Let $B_K$ be a two-dimensional metric ball of radius $1$ in the hyperbolic space with constant negative curvature $K<< -1$.
The area of $B_K$ behaves as $\exp(\sqrt{-K})>>1$, yet $\HC_2(B_K)\leq 1<<vol(B_K),$ as $B_K$ can be covered
by just one metric ball of radius $1$.
It is obvious that $\HC_2(B_K)=1$, and that if we will cut a concentric metric
ball of a small radius $\epsilon$ out of $B_K$, the value of $\HC_2$ will not change. (This is another example of non-additivity of $\HC_m$, this time in
the situation when $m=dim(X)$.)

\item Let $l^n_\infty$
denote the $n$-dimensional linear space endowed with the $\max$-norm.
\begin{lemma} Consider the unit cube $C=[0,1]^n$ in $l^n_\infty$.
For each $m\leq n$ $\HC_m(C)={1\over 2^m}.$
\end{lemma}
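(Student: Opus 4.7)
The plan is to prove the two inequalities $\HC_m(C) \le 1/2^m$ and $\HC_m(C) \ge 1/2^m$ separately, exploiting the fact that closed metric balls in $l^n_\infty$ are precisely the coordinate cubes of side $2r$.

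For the upper bound, I would simply observe that $C = [0,1]^n$ is itself a closed metric ball in $l^n_\infty$, namely the ball of radius $1/2$ centered at the point $(1/2, \ldots, 1/2)$. Taking this single ball as a covering gives $\HC_m(C) \le (1/2)^m = 1/2^m$ immediately, with no dependence on $n$.

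For the lower bound, let $\{B(x_i, r_i)\}$ be any (countable) covering of $C$. I split into two cases. If some radius satisfies $r_i \ge 1/2$, then $\sum_j r_j^m \ge r_i^m \ge (1/2)^m$, and we are done. Otherwise all radii satisfy $r_i < 1/2$, so each ball $B(x_i, r_i)$ is a coordinate cube of side length $s_i = 2r_i < 1$. These cubes cover $[0,1]^n$, so by the (countable) subadditivity of $n$-dimensional Lebesgue measure
\[
\sum_i s_i^n \;=\; \sum_i \Vol(B(x_i,r_i)) \;\ge\; \Vol(C) \;=\; 1 .
\]
Now because $0 < s_i < 1$ and $m \le n$, the monotonicity of $t \mapsto t^a$ in the exponent gives $s_i^m \ge s_i^n$ termwise; summing and dividing by $2^m$ yields $\sum_i r_i^m \ge 1/2^m$. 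Taking the infimum over coverings completes the proof.

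I do not anticipate a genuine obstacle here: the key points are just the identification of $l^n_\infty$-balls with cubes and the elementary comparison $s^m \ge s^n$ when $s \le 1$ and $m \le n$. The only mild subtlety is handling non-integer $m$, which is why one needs to pass through the $n$-dimensional volume inequality rather than trying to project onto an $m$-dimensional subspace; the case split on whether some $r_i \ge 1/2$ is what makes the argument work uniformly in $m \in (0,n]$ without requiring integrality.
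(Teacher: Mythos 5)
Your proof is correct and follows essentially the same strategy as the paper: identify $l^n_\infty$-balls with coordinate cubes, use the Lebesgue-measure comparison $\sum (2r_i)^n \ge 1$, and then pass from exponent $n$ to exponent $m$. The only difference is in the last step: you handle the exponent change by a case split on whether some $r_i \ge 1/2$ together with the termwise bound $s_i^m \ge s_i^n$ for $s_i \le 1$, whereas the paper avoids the case split by invoking the superadditivity inequality $\sum_i (r_i^m)^{n/m} \le \bigl(\sum_i r_i^m\bigr)^{n/m}$ (valid since $n/m \ge 1$). Both are valid and comparably short.
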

Indeed, observe that the cube is the metric ball of radius ${1\over 2}$ centered at $({1\over 2},\ldots ,{1\over 2})$, and so $\HC_m(C)\leq ({1\over 2})^m$.

To prove the opposite inequality
consider a covering of $C$ by metric balls with radii $r_i$ so that
$\sum_i r_i^m\leq \HC_m(C)+\epsilon$
for an arbitrarily small $\epsilon$.
Observe that these metric balls are
cubes with side length $2r_i$. As the Lebesgue measure of their union cannot be less than the Lebesgue measure of $C$, we conclude that $\sum_i (2r_i)^n\geq 1$,
whence ${1\over 2^n}\leq \sum_i (r_i^m)^{n\over m}\leq (\sum_ir_i^m)^{n\over m}\leq (\HC_m(C)+\epsilon)^{n\over m}.$
When $\epsilon\longrightarrow 0$, we obtain the desired inequality.

\begin{corollary} \label{HCpar}
If $P=[0, r_1]\times\ldots\times [0,r_n]$, is an $n$-dimensional parallelepiped in $l^n_\infty$,
where $r_1\leq\ldots\leq r_n$, and $m\leq n$,
then $\HC_m(P)\geq ({r_1\over 2})^m.$
\end{corollary}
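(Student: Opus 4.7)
The plan is to reduce the corollary directly to the preceding lemma together with two of the listed basic properties of Hausdorff content. Since $r_1 \leq r_i$ for every $i$, the cube $Q = [0,r_1]^n$ is contained in $P$. By monotonicity (property 3), this gives
$$\HC_m(P) \geq \HC_m(Q).$$

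Next I invoke the rescaling property (property 4) applied to the scalar $r_1$ acting on the unit cube $[0,1]^n \subset l^n_\infty$: since $Q = r_1 \cdot [0,1]^n$, we get
$$\HC_m(Q) = r_1^m \, \HC_m([0,1]^n).$$
By the previous lemma, $\HC_m([0,1]^n) = 1/2^m$ whenever $m \leq n$, so combining the two displays yields $\HC_m(P) \geq (r_1/2)^m$, as required.

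There is no real obstacle here: the entire content of the corollary is already encoded in the lemma for the unit cube, and the passage to a parallelepiped only requires observing that the "short side" dictates the largest inscribed cube. The hypothesis $m \leq n$ is used solely to invoke the lemma.
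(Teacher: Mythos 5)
Your proof is correct and follows essentially the same approach as the paper: both arguments reduce to the inscribed cube of side $r_1$ via monotonicity and then evaluate its Hausdorff content using the preceding lemma (you make the rescaling step explicit where the paper simply asserts $\HC_m(C)=(r_1/2)^m$, but this is the same idea).
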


Indeed, by monotonicity, $\HC_m(P)$ is not less than $\HC_m$ of the $n$-cube $C \subset P$
with side length $r_1$, 
for which we have
$\HC_m(C)=({r_1\over 2})^m.$
\end{enumerate}

\subsection{Plan of the paper.}

Our proof of the isoperimetric inequality follows Gromov's approach (\cite{GromovFilling}) with simplifications of Wenger (\cite{W}) and modifications made
by Guth (\cite{Gu3}).
The additivity of volume (or, equivalently, Hausdorff measure)
is used many times throughout Gromov's, or Wenger's, or Guth's proofs.
Our biggest technical problem that the additivity
property of the Hausdorff measure does not hold anymore for
Hausdorff contents. Of course, the sole reason for failure of
additivity for $\HC_m$ is the fact that given
a cover of the union
of two disjoint sets $A$ and $B$ by metric balls, the same metric ball can cover both a part of $A$ and a part of $B$
providing ``savings" compared to the situation, when $A$ and $B$ are
covered separately.

Our first remedy involves the following observation: Assume that disjoint metric balls $B_1,\ldots ,B_k$
have comparable radii (say, all their radii are in the interval $[{s\over 2}, s]$ for some $s$), and $\HC_m$ of their union is
very small in comparison with $s^m$ (say, $< ({s\over 1000})^m$). This means that metric balls used for an almost optimal (from the point of view of $\HC_m$) covering of
$\bigcup_i B_i$ can contain only metric balls of radius $\leq {s\over 1000}.$ Now we can just throw away all metric balls
in this covering that intersect more than one of the balls $B_i$, and observe that the remaining balls still cover
$\bigcup_i(1-{1\over 250})B_i$, and no remaining ball can intersect more than one $B_i$. As a result, we can conclude
that $\HC_m(\bigcup_i B_i)$ is at least as large as $\sum_i\HC_m(0.996 B_i)$. If, in addition, we know that there is some $\theta>0$ such that $\HC_m(0.996 B_i)\geq\theta\HC_m(B_i),$
then we have the inequality $\HC_m(\bigcup_i B_i)\geq\theta\sum_i\HC_m(B_i)$ that provides a replacement for the missing additivity. A version of this argument is used in the proof of Proposition \ref{decomposition} below.

The proof of the main theorem continues only in 
section 3. Section 2 is devoted to the proof of the coarea inequality, the cone inequality
and a version of Federer-Fleming deformation theorem for
compact metric space, continuous functions and the Hausdorff content. These inequalities will
then be used in section 3 to prove all our main results.

The cone inequality involves two metric spaces $X$
and $Y\subset X$ inside of a ball of radius $R$ in a Banach space. One wants to construct a continuous map $\psi$ of $X$ into $B$
so that this map remains the identity map on $Y$, yet
$\HC_{m+1}(\psi(X))\leq c(m)R\HC_m(Y).$ The obvious idea is
to try to map $X$ into the cone $CY$ over $Y$ with the tip of the cone at the
center of $B(R)$. Yet how can one map $X$ into $CY$? The mapping to the cone would involve mapping at least some neighbourhood
of $Y$ in $X$ to $Y$, yet, there does not seem to be any general
way to do this. As a result, our ``cone" is {\it not} $CY$
(although it lies within a small neighbourhood of it).

Also, in section 2 we present a concise proof of a very general version of the coarea inequality
for Lipschitz maps in metric spaces and Hausdorff content. Moreover, it is applicable to coverings by arbitrary restricted families of balls, for example, balls with centers in a certain set, or balls of radius bounded above. We believe that our coarea inequality for Hausdorff content and its modifications has applications that go beyond the present paper.

Classical Federer-Fleming deformation theorem allows one
to construct a filling of a Lipschitz cycle with controlled mass
using repeated coning and projection
into skeleta of a fixed cubical grid. It also follows
from the construction that the filling lies in a neighbourhood
of the cycle of controlled size.
We prove a version of this result
for extensions of arbitrary continuous map from a compact metric space
and Hausdorff content instead of mass. A closely related results can be found in papers of Robert Young (\cite{Y}) and Guth (\cite{Gu3}).
The (not very significant) difference between our result and these
results is that these authors look for fillings of cycles by arbitrary chains, while we
are looking for a filling by a map from a fixed domain.

Section 3 contains our main results, and, in particular, an adaptation of the Michael-Simon-Gromov
isoperimetric inequality for Hausdorff contents. The original
J. Michael-L. Simon isoperimetric inequality \cite{MS} asserts that 
an $(n-1)$-dimensional cycle $Y$ in $\mathbb{R}^N$ bounds
an $n$-chain with $n$-dimensional volume $\leq c(n)vol_{n-1}(Y)^{n\over n-1}$. Gromov proved this
using a different method that makes it possible to prove this inequality
for $L^\infty$ and other Banach spaces instead of $\mathbb{R}^N$ (\cite{GromovFilling}).
Wenger simplified Gromov's proof and improved the value of the constant 
(\cite{W}). In \cite{Guth_Urysohn}
Guth adopted Wenger's proof to prove a version
of the Michael-Simon inequality for maps, where given a $n$-manifold $X$ with boundary $Y$ and its map $f$ into $\mathbb{R}^N$ one
wants to alter $f$ to a new map $F$ that coincides with $f$ on $Y$ and satisfies the inequality
$vol_n(F(X))\leq c(n)vol(f(Y))^{n\over n-1}$. Yet a similar theorem valid for maps to all Banach
spaces and with a concrete value for constant $c(n)$ is stated (without proof) already in \cite{GromovFilling} in section (A$'''$) of Appendix 2. In this theorem stated by Gromov both $X$ and $Y$ are assumed to be polyhedra rather than manifolds. In our Theorem \ref{isoperimetric} sets $X$ and $Y$ are assumed
to be compact metric spaces, and we use Hausdorff contents instead of Hausdorff measures.
Proofs of Gromov, Wenger
and Guth use induction on $n$. Our proof also uses
induction on $m$, despite the fact that unlike
the situation in all these papers, our $m$ has nothing to do
with the dimension of the compact metric space $Y$.

%
Our proof also provides an estimate of
how far the filling of $f(Y)$ by $F(X)$
will lie from $f(Y)$ (an analogue of Gromov's filling radius estimate).
Note, that it is {\it not} true that for
arbitrary $Y$ and $X$ the filling will be in $R$-neighbourhood of
$f(Y)$ for $R=const(m)\HC_{m}(Y)^{\frac{1}{m}}$.
Yet we prove that a filling by $X$ is possible in any $(\lceil m\rceil-1)$-connected open set containing the $R$-neighbourhood of $f(Y)$ and, moreover, $\HC_{m+1}(F(X))\leq const(m)\HC_m(f(Y))^{m+1\over m}$.


We want to follow the approach of \cite{W}, \cite{Guth_Urysohn} that involves a sequence of local improvements of the cycle $Y$ that we want to fill.
Assuming that $Y$ is a subset of $B$, and $f$ is the inclusion, we construct a sequence $Y=Y_1,Y_2,\ldots,$ of compact metric spaces
in $B$ and maps $\psi_k:Y_k\longrightarrow Y_{k+1}$. The distance between
$y\in Y_k$ and $\psi_k(y)$ is bounded by $const(m)\HC_m(Y_k)^{1\over m}$, and $\HC_m(Y_k)$ exponentially decreases with $k$. Therefore, for each $y\in Y$ the distance between $y$ and the image of $y$ under the composition of $\psi_i, i\in\{1,\ldots, k\}$ is uniformly bounded by $Const(m)\HC_m^{1\over m}(Y)$.

If the ambient
space $B$ has a finite dimension $N$, then once $\HC_m(Y_k)$ becomes less
than a certain $\epsilon$ that is allowed to depend also on $N$, one performs a version
of Federer-Fleming argument to find the last map $\psi_\infty: Y_k\longrightarrow Y_\infty$ into a $(\lceil m\rceil-1)$-dimensional CW-complex $Y_\infty$  so that each $y\in Y_k$
and its image under $\psi_\infty$ are close.

We explicitly construct extensions of the inclusion $f:Y \to B$ to parts of $X$ so that
the images of these maps will fill the ``gaps" between $Y_i$ and $Y_{i+1}$. The image
of one of these parts
of $X$ will partially fill between $Y_k$ and $Y_\infty$. The remaining part of $X$
will be mapped to the cone over $Y_\infty$. While we like the constructive
nature of this extension process, and it does not make our proof significantly more
complicated, we could have just forgotten about $X$, construct an $m$-filling
of $Y$ (defined in section 1.1) and use the theory of absolute extensors to conclude
that a desired extension to any $X$ always exists.

In this case, $Y_\infty$ would play the role of $K$ in the definition of $m$-filling, the composition of all $\psi_i$ the role of $\phi$, and an arbitrarily small open neighbourhood of an image of a homotopy between the inclusion $i$ of $Y$ and $\phi$
that naturally appears in our proof the role of $O$.

To ``improve" $Y$ (and then $Y_2$, $Y_3$, etc.) one finds a certain collection of
disjoint metric balls, cuts them out, and replaces them by chains with smaller volumes. On the first glance this approach seems absolutely bound to fail in our situation. Indeed, consider examples 8 and 9 in the previous subsection.
In both cases we cut out a large piece from some $Y$, yet $\HC$
does not decrease, and, moreover, can increase once one adds a set with an arbitrarily small Hausdorff content.

Yet a remedy exists: We combine the idea explained in (1) that involves using only the metric balls with $\HC_m$ that is much
less than the $m$th power of the radius with a powerful new idea: Fix an almost optimal (from the point of view of $\HC_m$)
finite covering $Q$ of $Y$ by metric balls $B_i$. We introduce the concept of $m$-dimensional Hausdorff content, $\widetilde\HC_m$, with respect to the covering $Q$ for subsets of $Y$. $\widetilde\HC_m$ is defined as $\HC_m$ but with the following important difference: This infimum of $\sum_i r^m$
is taken only over coverings by metric balls from $Q$.
(That is, each ball of any covering of a subset of $Y$ that we are allowed to consider must be one of the balls $B_i$.) 
Observe that: (a) $\HC_m\leq\widetilde\HC_m$, so an upper bound
for $\widetilde\HC_m$ is automatically an upper bound for 
$\HC_m$; (b) For $Y$ $\widetilde\HC_m$ and $\HC_m$ are almost
the same. Now the idea is to run Wenger's argument by
removing (and then replacing) only metric balls with
$\widetilde\HC_m\sim ({r\over A(m)})^m,$ where $r$ denotes the radius of the considered ball, and $A(m)$ is a sufficiently large constant.
(It is not difficult to see that if there are not sufficiently many metric balls with this property to run this argument,
then $Y$ has a ``round shape" (meaning that $\HC_{m}(Y)^{1\over m}\sim$
the diameter of $Y$ in the ambient metric), so that the isoperimetric inequality for $Y$ follows from our version of the cone inequality.) The basic idea
here is that now we are throwing out only ``important" balls.
Moreover, for each ball $B$ that is being replaced, the balls from $Q$ in the optimal covering of $B$ yielding the value of $\widetilde\HC_m(B)$ are also important (despite being very small): removal of each of those balls from $Y$ reduces $\widetilde\HC_m(Y)$ and, therefore, its $\HC_m$.

There is an additional difficulty in the proof that arises only in the case when the ambient Banach space is infinite-dimensional. The problem is that our version of Federer-Fleming deformation
theorem is proven only for finite-dimensional ambient Banach spaces.
Therefore, in order to prove the filling radius estimate we need to apply Federer-Fleming
deformation theorem
to a part of $Y$ of very small Hausdorff content by projecting it onto 
a finite dimensional subspace $S'$ of the Banach space. By Kadec-Snobar theorem
we can bound the Hausdorff content of the projection in terms of the dimension of
$S'$. For the argument to work we need to have an a priori bound for the dimension
of $S'$ and also know that the small part of $Y$ will lie close to $S'$.
To accomplish this we fix an almost optimal covering of $Y$ 
in the beginning of the argument and then choose
$S'$ that contains all centres of balls in the covering.
We then carry out the inductive construction
for a version of Hausdorff content measured
using ball with centres in $S'$.

Finally, we observe that our proof of Theorem \ref{isoperimetric0} yields not only Theorem \ref{isoperimetric0} but also Theorem \ref{main2}.

The last section contains the proof of the inequality $sys_1(M^n)\leq 3 UW_{m-1}(M^n)$ for $m$-essential manifolds that combined together with Theorem \ref{main2} yields Theorem 1.8. Our proof is 
similar to Gromov's proof of the inequality $sys_1(M^n)\leq 6 Fill Rad(M^n)$ in \cite{GromovFilling}.

\section{Cone, coarea and Federer-Fleming inequalities for Hausdorff content}

Recall the notation  for the infimal Hausdorff content of an 
extension of a map between metric spaces we introduced earlier.
Given metric spaces $X,Y$, subsets $A \subset X$, $U \subset Y$
and a map $f:A \rightarrow U$ define
$\cF_m(f,X,U) = \inf \{\HC_m(F(X))\}$, where the infimum is taken over all maps
$F:X \rightarrow U$, such that $F|_A = f$. If no such extension $F$ exists we set 
$\cF_m(f,X,U)=\infty$.

We will also need a version of $\cF$ where Hausdorff content is measured with 
respect to a collection of balls whose centers are contained in some fixed
subset $W$. Let $V, W \subset X$ be subsets of a metric space $X$.
Define $\HC_m(V;W)$ to be the infimum of
$\sum_i r_i^m$ among all coverings of $V$ by countably many balls $B(x_i, r_i)$ 
with $x_i \in W$. We set $\cF_m(f,X,U;W) = \inf \{\HC_m(F(X);W)\}$, where the infimum is taken over all maps
$F:X \rightarrow U$, such that $F|_A = f$, and $\cF_m(f,X,U;W)=\infty$ if
there is no extension.

\begin{lemma}[Cone inequality]\label{cone}
Let $B=B(p,R)$ be a closed metric ball of radius $R$ in a Banach space $S$, 
$W$ a linear subspace of $S$, $U$ a closed convex subset of $B(p,R)$ with $p \in W\cap U$.
Suppose $X$ is a metric space, $Y \subset X$ closed subset
and $\tau: Y \longrightarrow U$ is a continuous map with $\tau(Y)$ compact.
Then $$\cF_m(\tau, X,U) \leq m(1+{1\over m})^m R \HC_{m-1}(Y)\leq e m R\HC_{m-1}(Y).$$
More generally,
$$\cF_m(\tau, X,U;W) \leq m(1+{1\over m})^m R \HC_{m-1}(Y;W)\leq e m R\HC_{m-1}(Y;W)$$

Moreover, if $X \subset B$ and $\tau: Y \longrightarrow U$ is the inclusion map, $\rho>0$,
then for every $\delta>0$ there exists a map 
$F:X \longrightarrow U$ with
$$\HC_m(F(X);W) \leq m(1+{1\over m})^m R \HC_{m-1}(Y;W)+ \delta$$ and $F(X \setminus N_{\rho}(Y)) \subset \{p\}$.
\end{lemma}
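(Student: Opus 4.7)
The plan is to take $F$ as a straight-line contraction of an arbitrary continuous extension of $\tau$ toward $p$, and then cover its image by chains of balls along the segments from $p$ to the centres of a near-optimal covering of $\tau(Y)$.

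By the Dugundji extension theorem (applicable since the closed convex $U\subset S$ is an absolute retract for the category of metric spaces), pick a continuous $\tilde\tau\colon X\to U$ extending $\tau$. Continuity of $\tilde\tau$ at each $y\in Y$ together with compactness of $\tau(Y)$ allow us, for any preassigned $\delta>0$, to choose an open neighbourhood $\tilde U\supset Y$ in $X$ with $\tilde\tau(\tilde U)\subset N_\delta(\tau(Y))$; in the ``moreover'' setting we additionally require $\tilde U\subset N_\rho(Y)$. By Urysohn's lemma pick continuous $g\colon X\to[0,1]$ with $g|_Y\equiv 1$ and $g\equiv 0$ off $\tilde U$, and set
\[
F(x)\;=\;(1-g(x))\,p\;+\;g(x)\,\tilde\tau(x).
\]
Then $F$ is continuous, $F|_Y=\tau$ (as $g=1$ on $Y$), $F(X)\subset U$ by convexity, and $F(X\setminus N_\rho(Y))=\{p\}$ in the moreover setting. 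Each value of $F$ lies on a segment $[p,\tilde\tau(x)]$, so $F(X)\subset\{p\}\cup\bigcup_{q\in N_\delta(\tau(Y))}[p,q]$.

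To estimate the content, fix $\varepsilon>0$ and a finite covering $\{B(x_i,r_i)\}_{i=1}^N$ of $\tau(Y)$ with $x_i\in W$ and $\sum_i r_i^{m-1}<\HC_{m-1}(Y;W)+\varepsilon$. Then $F(X)$ is contained in the union of the straight cones $C_i=\{(1-t)p+tq:\,t\in[0,1],\ q\in B(x_i,r_i+\delta)\}$. Cover each $C_i$ by a chain of balls $B(c_k^{(i)},\rho_k^{(i)})$ with $c_k^{(i)}=(1-s_k^{(i)})p+s_k^{(i)}x_i$; these centres lie in $W$ because $W$ is a linear subspace containing both $p$ and $x_i$. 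The requirement that $B(c_k^{(i)},\rho_k^{(i)})$ contain the cone cross-section at parameter $t$ reduces, by the Banach triangle inequality, to $|t-s_k^{(i)}|\,\|x_i-p\|+t(r_i+\delta)\le\rho_k^{(i)}$. Take the $s_k^{(i)}$ in geometric progression with ratio $\mu>1$ and $\rho_k^{(i)}$ minimal so that consecutive covered $t$-intervals just touch; summing the resulting geometric series and optimising $\mu$ (with extremum near $\mu-1\sim 2r_i/((m-1)\|x_i-p\|)$) yields
\[
\sum_k(\rho_k^{(i)})^m\;\le\;m\!\left(1+\tfrac{1}{m}\right)^m R\,(r_i+\delta)^{m-1}.
\]
In the degenerate regime where $r_i$ is comparable to $\|x_i-p\|$, cover $C_i$ instead by the single ball $B(p,\|x_i-p\|+r_i+\delta)$ centred at $p\in W$; a direct check shows this still respects the bound. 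Summing over $i$ and letting $\varepsilon,\delta\to 0$ gives $\HC_m(F(X);W)\le m(1+1/m)^m R\,\HC_{m-1}(Y;W)$, with the slack absorbed into the ``$+\delta$'' of the moreover statement. Taking $W=S$ recovers the first (unadorned) inequality as a special case.

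The main technical step is the variational optimisation of the chain-of-balls covering to obtain the constant $m(1+1/m)^m$; the remaining points---continuity of $F$, inclusion of its image in $U$, and the fact that the chain-ball centres remain in $W$---are immediate from the convexity of $U$ and the linearity of $W$. The one subsidiary subtlety is ensuring that the Dugundji extension can be localised near $\tau(Y)$, which is handled by the pointwise-continuity/compactness neighbourhood argument above.
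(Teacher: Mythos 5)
Your overall strategy is the same as the paper's: reduce via Dugundji to the inclusion of a compact $Y\subset U$, interpolate linearly toward $p$ with a cutoff vanishing off a small neighbourhood of $Y$, observe that the image then lies in the union of the straight cones over the balls of a near-optimal covering, and cover each cone by a chain of balls centred on $[p,x_i]\subset W$. The difference is the chain itself. The paper covers $Cone_p(B_i(r_i))$ by roughly $mR/r_i$ balls of the \emph{identical} radius $(1+1/m)r_i$, spaced $r_i/m$ apart along $[p,q_i]$; this gives $m(1+1/m)^m R\,r_i^{m-1}$ by a bare count, with nothing to optimise and with no case distinction in $r_i$. Your geometric-progression chain with ratio $\mu$ is a legitimate alternative (it is essentially what the Remark after the lemma gestures at as an \emph{improvement}; carried out, your optimisation with $\mu-1\sim 2r_i/((m-1)\|x_i-p\|)$ yields roughly $\tfrac{e}{2}R\,r_i^{m-1}$, independent of $m$ and better than the stated bound), but you only justify it in the regime $r_i\ll\|x_i-p\|$ and assert rather than derive the displayed estimate.

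The concrete gap is the degenerate regime. The single ball $B(p,\|x_i-p\|+r_i+\delta)$ does \emph{not} always respect the bound: when $r_i\approx\|x_i-p\|\approx R$ (which is possible, since a near-optimal covering of $Y\subset B(p,R)$ may contain a ball of radius close to $R$), the $m$-th power of that radius is about $(2R)^m$, while the target is $m(1+1/m)^m R\cdot r_i^{m-1}\approx m(1+1/m)^m R^m$. Already at $m=3$ this fails, $8>\tfrac{64}{9}\approx 7.1$, and the overshoot grows with $m$ because $2^m$ eventually dwarfs $m(1+1/m)^m\sim em$. Centring the fallback ball at $x_i\in W$ with radius $\max(\|x_i-p\|,r_i)+\delta$ would repair this, but you would then still need to specify an $m$-dependent threshold for switching regimes and check that the geometric chain handles everything on the other side of it. The paper's uniform chain avoids the case split entirely: its first link $B(q_i,(1+1/m)r_i)$ already covers the base ball $B_i$ outright, and the remaining links just follow the segment toward $p$ however short it is.
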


Here $e = \lim_{m \rightarrow \infty}(1+ \frac{1}{m})^m=2.718\ldots$ is 
the Euler's number.

\begin{proof} 
We will prove the more general upper bound for $\cF_m(\tau, X,U;W)$.
By taking $W = S$ we get the bound for $\cF_m(\tau, X,U)$.

First we observe that the Lemma follows from its special case
when $X$ is a subset of $U$, and $\tau$ is the inclusion map (so $Y$ is a compact
subset of $U$).
Indeed, since $U$ is convex,
by a generalization of Tietze extension theorem proven by Dugundji
(see \cite[Theorem 4.1]{Du}) 
there exists an extension $F': X \longrightarrow U$
of $\tau:Y \longrightarrow U$.
If for every $\epsilon>0$ there exists a map $\Psi: F'(X) \longrightarrow U$
that is equal to the identity on the set $\tau(Y)$  and  
with $\HC_m(\Psi(F'(X))) \leq L + \epsilon$, then by considering composition $\Psi \circ F'$
we obtain $\cF_m(\tau, X,U) \leq L$.
Hence, without any loss of generality we may assume
$Y\subset X \subset U$ and that $Y$ is compact.

Let $\{B_i(r_i)\}_{i\in I}$ be a finite covering of
$Y$ by closed metric balls with centers in $W$,
such that $\sum_i r_i^{m-1}\leq \HC_{m-1}(Y;W)+\epsilon$ for some $\epsilon$ that
can be taken arbitrarily small. Without any loss of generality we can assume that the union of these balls
covers not only $Y$ but an open neighbourhood $N_r(Y)$ of $Y$ for a very small positive radius $r\leq\min_i r_i$. We take $r< \rho$ from the 
statement of the theorem and $r< \epsilon$.

For each set $V$ let $Cone_p(V)$ denote the union of the closed
straight line segments in the ambient Banach space connecting $p$ with
all points of $V$. 

Observe, that for each metric ball $B_i(r_i)$ the set $Cone_p(B_i(r_i))$
can be covered by at most $m{R\over r_i}$ metric balls of radius $(1+{1\over m})r_i$
with centers in $W$. 
Indeed, the first
of these balls is $B_i((1+{1\over m})r_i)$; the centers of all subsequent balls are spaced
along the segment connecting the center $q_i$ of $B_i(r_i)$ and $p$ at distances
${r_i\over m}$ apart. For every $t \in [0,1]$ the distance between $tq_i$
and the center point $y$ of one of these balls is at most ${r_i\over 2m}$. By triangle inequality,
if $x \in B_i(r_i)$ then $dist(tx,y) \leq t r_i + {r_i\over 2m}\leq (1+{1\over m})r_i$.

Therefore,
$Cone_p(N_r(Y))$ can be covered by a collection of at most $\sum_i m{R\over r_i}$ metric
balls with $m$ balls of radius $(1+{1\over m})r_i$ for each $i$, where $i$ ranges over $I$. Therefore,
$\HC_m(Cone_p(N_r(Y);W)\leq \sum_i {mR\over r_i}((1+{1\over m})r_i)^{m}\leq m(1+{1\over m})^m R(\HC_{m-1}(Y;W)+\epsilon),$
where $\epsilon$ is arbitrarily small.

Let $\phi: [0, \infty] \rightarrow [0,1]$ be a continuous monotone function with
$\phi(0) =1 $ and $\phi(t) = 0$ for all $t \geq r$ defined as
$1-{1\over r}x$ for $x\in [0,r]$ and zero for all $x\in [r,\infty)$.
We define
a map $\Phi: U \rightarrow Cone_p(N_{r}(Y)\cap U)$
by the formula $\Phi (x) = \phi(dist (x, Y)) x +(1-\phi(dist(x,Y))p$.

We set $Z = \Phi(X)$. As $Z\subset Cone_p(N_r(Y))$, we have $\HC_m(Z)\leq\HC_m(Cone_p(N_r(Y);W)\leq m(1+{1\over m})^m R(\HC_{m-1}(Y;W)+\epsilon)$
for arbitrarily small $\epsilon$. Also, it
is clear from the definition of $\Phi$
that $\Phi(x)=p$ if $dist(x, Y)\geq r \geq \rho$.
\end{proof}

{\bf Remark.} We can improve the inequality to 
$\cF_m(\tau, X,U;W) \leq const\ R\ \HC_{m-1}(Y;W)$
where $const$ is an absolute constant that does not depend on $m$ by using balls of radii $(1-{j\over m})(1+{1\over m})r_i$, $j=0,\ldots, m-1$ to cover $Cone_p(B_i(r_i))$
instead of the balls with identical radii $(1+{1\over m})r_i$. (The new balls are centered at the same points as the old balls.)
As the result, we improve multiplicative dimensional constants in the right hand sides of
all inequalities in the previous lemma from $m(1+{1\over m})^m$ to $2(1+{1\over m})^m< 2e<6$.


\vskip 0.1in

Observe that $Z$ in the proof of Lemma \ref{cone} is not a cone over $Y$, although it is very close to $Cone_p(Y)$.
In fact, we can replace the chosen value of $r$ by any smaller
positive value, and the proof would still work. So, if desired we could
choose $Z$ arbitrarily close to an actual cone over $Y$ in $S$.
Below we will be referring to $Z$  as the cone over $Y$, and will call this 
construction {\it the coning of Y}. 

We will also need the following co-area formula for Hausdorff content. Let $X$ be a metric space, and $\Gamma$ denote a set of closed metric balls
that together cover $X$. Three main examples of $\Gamma$ are: 1) All metric balls; 2) all metric balls with centers contained into some fixed subset $W$
of $X$; 3) A specific covering of $X$ by a family of closed metric balls; 4) All metric balls of radius $\leq R$, where $R$ is some constant. For $V\subset X$ we denote $\HC_m(V;\Gamma)$ to be the infimum of $\sum_i r_i^m$ among all coverings of $V$ by countably many balls $B(x_i,r_i)$, where $B(x_i,r_i)\in \Gamma$. In the case 1) $\HC_m(V;\Gamma)=\HC_m(V)$, in the case
2) $\HC_m(V;\Gamma)=\HC_m(V;W)$, in the case 3) when a covering of $X$ is fixed, we will be denoting $\HC_m(V;\Gamma)$ as $\widetilde{\HC}_m(V)$.

\begin{lemma} \label{coarea} (Co-area inequality)
Let $U$ be a compact set in an arbitrary metric space $X$, $\Gamma$ a family of closed metric balls that covers $X$, 
and $f: U \longrightarrow [R_1, R_2]$
a Lipschitz function with Lipschitz constant $Lip(f)$.
Then,
$$\int^{*R_2}_{R_1}\HC_{m-1}(f^{-1}(R);\Gamma)\ dR\leq 2Lip(f)\HC_m(U;\Gamma),$$
where $\int^*$ denotes the upper Lebesgue integral.
Therefore, there exists $R \in [R_1,R_2]$,
such that $\HC_{m-1}(f^{-1}(R);\Gamma) \leq \frac{2\ Lip(f)}{R_2 - R_1} \HC_m(U;\Gamma)$.
\end{lemma}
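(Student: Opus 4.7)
The plan is to run the classical coarea argument, replacing volume by a near-optimal ball covering, with care taken because Hausdorff content is only subadditive (not additive) and the function $R\mapsto \HC_{m-1}(f^{-1}(R);\Gamma)$ is not obviously measurable.

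First I would fix $\epsilon>0$ and choose a countable covering $\{B(x_i,r_i)\}_{i\in I}$ of $U$ by balls from $\Gamma$ with $\sum_i r_i^m\le \HC_m(U;\Gamma)+\epsilon$. Since $f$ is Lipschitz with constant $L=\Lip(f)$, the image $f(B(x_i,r_i)\cap U)$ is a compact (hence Lebesgue-measurable) subset of $\R$ of diameter at most $2Lr_i$, so its Lebesgue measure satisfies $|f(B(x_i,r_i)\cap U)|\le 2Lr_i$. For each fixed $R\in[R_1,R_2]$, the subfamily of balls $B(x_i,r_i)$ that meet $f^{-1}(R)$ covers $f^{-1}(R)$, and each such ball belongs to $\Gamma$; subadditivity of the $\Gamma$-restricted content therefore gives
\[
\HC_{m-1}(f^{-1}(R);\Gamma)\ \le\ \sum_{i\in I}r_i^{m-1}\mathbf{1}_{f(B(x_i,r_i)\cap U)}(R).
\]

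Next I would dominate the (possibly non-measurable) left-hand side by the measurable function on the right, justifying the use of the upper Lebesgue integral. Monotone convergence and Fubini for the term-by-term integrals then yield
\[
\int^{*R_2}_{R_1}\HC_{m-1}(f^{-1}(R);\Gamma)\,dR\ \le\ \sum_{i\in I}r_i^{m-1}\,|f(B(x_i,r_i)\cap U)|\ \le\ 2L\sum_{i\in I}r_i^m\ \le\ 2L\bigl(\HC_m(U;\Gamma)+\epsilon\bigr).
\]
Letting $\epsilon\to 0$ gives the integral inequality. For the pointwise statement, if $\HC_{m-1}(f^{-1}(R);\Gamma)>\frac{2L}{R_2-R_1}\HC_m(U;\Gamma)$ held for every $R\in[R_1,R_2]$, then the upper integral would exceed $2L\,\HC_m(U;\Gamma)$, contradicting what we just proved; hence some $R$ witnesses the desired bound.

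The only genuinely delicate point is the measurability issue: a priori the map $R\mapsto\HC_{m-1}(f^{-1}(R);\Gamma)$ need not be Borel, which is why the statement is phrased with the upper Lebesgue integral. This is precisely what the dominating sum handles, since each indicator $\mathbf{1}_{f(B(x_i,r_i)\cap U)}$ is the indicator of a compact set. Everything else is a routine ``sum of radii'' bookkeeping. I would also remark that the same proof works verbatim in all four example classes of $\Gamma$ listed before the lemma (all balls, balls centred in a fixed $W$, a fixed covering, or balls of radius at most $R$), because we only use that the subfamily of covering balls that meet $f^{-1}(R)$ is still a legal $\Gamma$-covering of $f^{-1}(R)$.
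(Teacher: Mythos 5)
Your proof is correct and follows essentially the same route as the paper's: both fix a near-optimal $\Gamma$-covering, dominate $\HC_{m-1}(f^{-1}(R);\Gamma)$ by the sum of $r_i^{m-1}$ over balls meeting the fiber (your $\mathbf{1}_{f(B_i\cap U)}(R)$ is the paper's $\chi_i(R)$), swap sum and integral, and bound each term by $2\Lip(f)r_i$ using the Lipschitz estimate on the image of a ball. The only cosmetic differences are that you allow countable coverings and spell out the measurability point via the dominating measurable sum, while the paper works with a finite covering (available since $U$ is compact) where the interchange is trivial.
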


\begin{proof}
 Let $\{B_{r_i}(p_i)\}$,  $B_{r_i}(p_i)\in\Gamma$, 
 be a covering of $U$ with $\sum_i r_i^m\leq \HC_m(U;\Gamma)+\epsilon$, where $i\in\{1,\ldots ,N\}$ for some $N$, and $\epsilon$ can be chosen arbitrarily small. The desired inequality would follow from the inequality $\int^{*R_2}_{R_1} \HC_{m-1}(f^{-1}(R);\Gamma)\ dR\leq 2Lip(f)\sum_i r_i^m$.
We are going to prove a stronger inequality, where
$\HC_{m-1}(f^{-1}(R);\Gamma)$ is replaced by the following quantity that is obviously not less than $\HC_{m-1}(f^{-1}(R);\Gamma)$, namely, $\sum_{i\in I(R)}r_i^{m-1}$, where
$I(R)$ denotes the set of all indices $i$ such that the intersection of
$B_{r_i}(p_i)$ and $f^{-1}(R)$ is not empty. 
The left hand side of the desired inequality becomes
$$\int_{R_1}^{R_2}\sum_{i\in I(R)} r_i^{m-1}dR=\int_{R_1}^{R_2}\sum_{i=1}^N r_i^{m-1}\chi_i(R)dR=\sum_{i=1}^N r_i^{m-1}\int_{R_1}^{R_2}\chi_i(R)dR,$$
where the characteristic function $\chi_i(R)$ is equal to $1$ for
all $R\in [R_1,R_2]$ such that $f^{-1}(R)$ and $B_{r_i}(p_i)$ have a non-empty
intersection, and to $0$ otherwise. 

Finally, observe that 
if $\chi_i(\rho_1)=1$ and $\chi_i(\rho_2) =1$, then
$$|\rho_1-\rho_2| \leq Lip(f) dist(f^{-1}(\rho_1) \cap B_{r_i}(p_i), f^{-1}(\rho_2) \cap B_{r_i}(p_i))
\leq 2 Lip(f) r_i$$
It follows that $\int_{R_1}^{R_2}\chi_i(R)dR\leq 2Lip(f)r_i$, which implies the desired inequality.
\end{proof}

In particular, for a family of equidistant surfaces $\{\Sigma_R\}$,
$dist(\Sigma_t,\Sigma_s)=\vert t-s\vert$,
in a finite dimensional Banach space we obtain
the following version of co-area inequality.

\begin{lemma} \label{coarea2} (Co-area inequality 2)
Let $\{\Sigma_R\}$ be a family of equidistant surfaces in a Banach space $S$,
$W \subset S$ any subset, 
$U \subset S$ a compact set. Then,
$$\int^{*R_2}_{R_1}\HC_{m-1}(\Sigma_R\cap U;W)\ dR\leq 2\HC_m(U;W).$$
Therefore, there exists $R \in [R_1,R_2]$,
such that $\HC_{m-1}(\Sigma_R \cap U;W) \leq \frac{2}{R_2 - R_1} \HC_m(U;W)$.

\end{lemma}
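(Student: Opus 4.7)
The plan is to deduce this as a direct specialization of Lemma \ref{coarea} by manufacturing from the equidistance hypothesis a natural $1$-Lipschitz ``height'' function whose level sets are the slices $\Sigma_R\cap U$.

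First, set $U^\circ := U \cap \bigcup_{R\in[R_1,R_2]}\Sigma_R$; points of $U$ outside this set contribute nothing to the integrand $\HC_{m-1}(\Sigma_R\cap U;W)$. On $U^\circ$ define $f_0(x) = R$ whenever $x\in\Sigma_R$. This is well defined, because the hypothesis $dist(\Sigma_t,\Sigma_s) = |t-s|$ forces the $\Sigma_R$'s to be pairwise disjoint for distinct $R$; moreover $f_0$ is $1$-Lipschitz, since for $x\in\Sigma_{f_0(x)}$ and $y\in\Sigma_{f_0(y)}$,
\[
|f_0(x) - f_0(y)| = dist(\Sigma_{f_0(x)},\Sigma_{f_0(y)}) \leq dist(x,y).
\]
Extend $f_0$ to a $1$-Lipschitz function $f:U\to[R_1,R_2]$, first by McShane's extension lemma to a real-valued $1$-Lipschitz function on $U$ and then by truncating via $\max(R_1,\min(R_2,\cdot))$; truncation preserves the Lipschitz constant and leaves $f_0$ unchanged since $f_0$ already took values in $[R_1,R_2]$. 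In particular $f^{-1}(R)\supseteq\Sigma_R\cap U$ for every $R\in[R_1,R_2]$.

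Next, take $\Gamma$ to be the family of closed metric balls with centres in $W$, so that $\HC_m(\cdot\,;\Gamma)=\HC_m(\cdot\,;W)$ (this is case $2)$ of the list immediately preceding Lemma \ref{coarea}). Applying Lemma \ref{coarea} to $f$ and the compact set $U$ yields
\[
\int^{*R_2}_{R_1}\HC_{m-1}(f^{-1}(R);W)\,dR \;\leq\; 2\,\mathrm{Lip}(f)\,\HC_m(U;W) \;\leq\; 2\,\HC_m(U;W).
\]
Monotonicity of $\HC_{m-1}(\cdot\,;W)$ under inclusion together with $f^{-1}(R)\supseteq\Sigma_R\cap U$ gives $\HC_{m-1}(\Sigma_R\cap U;W)\leq\HC_{m-1}(f^{-1}(R);W)$, and the integral assertion follows.

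Finally, the pointwise ``therefore'' is a standard averaging argument: if $\HC_{m-1}(\Sigma_R\cap U;W) > \tfrac{2}{R_2-R_1}\HC_m(U;W)$ held for every $R\in[R_1,R_2]$, then the upper Lebesgue integral of this quantity over an interval of length $R_2-R_1$ would strictly exceed $2\HC_m(U;W)$, contradicting the bound just established. There is no serious obstacle in the argument: once the $1$-Lipschitz $f$ is in hand, Lemma \ref{coarea} does all the real work, and the only mild bookkeeping is in constructing $f$ on the possibly non-closed set $U^\circ$ before extending it to $U$.
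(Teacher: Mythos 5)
Your proposal is correct and is exactly the reduction the paper has in mind: the paper presents Lemma \ref{coarea2} with no separate argument, introducing it as ``a particular case'' of Lemma \ref{coarea}, and the implicit step is precisely the one you spell out — use the equidistance hypothesis to build a $1$-Lipschitz height function whose level sets absorb the slices $\Sigma_R\cap U$, then invoke Lemma \ref{coarea} with $\Gamma$ the balls centred in $W$. The McShane extension plus truncation is a clean way to handle the (possibly non-closed) domain $U^\circ$, and the monotonicity step $\HC_{m-1}(\Sigma_R\cap U;W)\le\HC_{m-1}(f^{-1}(R);W)$ correctly accounts for the level sets being potentially larger than the slices.
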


\par\noindent
{\bf Example.} Consider the case, when $\Sigma_R$ are metric spheres $S_R$ (= sets of points at the distance $R$ from 
a point $p\in X$). Taking $W=X$, we see that 
$$\int^{*R_2}_{R_1}\HC_{m-1}(S_R)\ dR\leq 2\HC_m(A_{R_1,R_2}),$$
where $A_{R_1,R_2}$ denotes the set of points $x$ such that $dist(x,p)\in [R_1,R_2]$.

We will also need a version of the Federer-Fleming 
deformation theorem from \cite{FF} for Hausdorff content.
Instead of constructing an extension of $f$
into a cone over $Y$ this argument
gives an extension of $f$ into a portion of the cone
between $Y$ and a lower dimensional skeleton of a fixed cubical complex,
where the vertex of the cone is chosen using an averaging argument.
Pushing out in this way repeatedly one eventually constructs an isoperimetric
extension of $f$ whose image 
outside of a neighbourhood of $Y$ of controlled size is contained in the $(m-1)$-skeleton
of the cubical complex.
For controlling 
Hausdorff content of cycles
this construction appeared in 
\cite[Proposition 7.1]{Gu3} and \cite[Lemma 2.5]{Y}. Our proof follows the same strategy
as the proofs in \cite{Gu3} and \cite{Y}.
The difference is that instead of
constructing an $m$-chain filling an 
$(m-1)$-dimensional cycle
we construct an extension of an arbitrary continuous map $f$ from a subset $Y$ of a metric space $X$. 

Let $Q(r)$ denote the cubical complex in $\R^n$ whose 
$n$-dimensional faces  are 
$\{C(a_1, ... , a_n) = \{[a_1r, (a_1+1)r] \times ... \times [a_nr, (a_n+1)r]; a_i \in \mathbb{Z}\}$, and let $Q(r)^{(k)}$ denote the $k$-skeleton of $Q(r)$,
$0 \leq k \leq n$. 

Let $\lceil m \rceil$ denote the smallest
integer that is greater than or equal to 
$m$.

\begin{lemma}[Federer-Fleming type inequality] \label{Federer-Fleming}
Let $X$ be a metric space, $Y\subset X$
and $f: Y \longrightarrow \R^n$ continuous map with $f(Y)$ compact.
Then for every $m \in [2,n]$ there exists a constant $c_1(n)>0$, such that
$$\cF_m(f,X, \R^n) \leq c_1(n) \HC_{m-1}(f(Y))^{\frac{m}{m-1}}$$
Moreover, there exists a constant $c_2(n)$, such that for every $\delta>0$
and $R = c_2(n) \HC_{m-1}(f(Y))^{\frac{1}{m-1}}+ \delta$ we have
$$\cF_m(f,X, N_R(f(Y)) \cup Q(R)^{(\lceil m \rceil-1)}) \leq c_1(n) \HC_{m-1}(f(Y))^{\frac{m}{m-1}}$$
\end{lemma}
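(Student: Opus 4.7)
The plan is to adapt the classical Federer--Fleming deformation theorem to Hausdorff content, in the spirit of Guth \cite{Gu3} and Young \cite{Y}. Set $V := \HC_{m-1}(f(Y))$ and take $R = c_2(n)V^{1/(m-1)} + \delta$ with $c_2(n)$ to be determined. First I would apply Dugundji's extension theorem \cite{Du} to produce a continuous extension $F_0\colon X \to \R^n$ of $f$. The strategy is to post-compose $F_0$ with a continuous map $\Phi\colon \R^n \to N_R(f(Y)) \cup Q(R)^{(\lceil m\rceil -1)}$ that fixes $f(Y)$ pointwise, so that $F := \Phi\circ F_0$ extends $f$ and has image in the desired target. (One may first retract $\R^n$ onto a bounded region containing $f(Y)$, which keeps the image of $F_0$ bounded.)

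The map $\Phi$ is built by iterated averaged radial projection, one skeleton dimension at a time, for $k = n, n-1, \ldots, \lceil m\rceil$. At stage $k$, on each $k$-cube $C$ of $Q(R)$ lying outside $N_{R/2}(f(Y))$, I would choose a center $p_C \in \tfrac{1}{2} C$ and let $\Phi$ on $C$ be the radial projection $\pi_{p_C}\colon C\setminus\{p_C\}\to \partial C$; on cubes intersecting $N_{R/2}(f(Y))$, $\Phi$ is the identity. The center $p_C$ is selected by an averaging estimate: since $m < k$, the integral $\int_{\frac{1}{2} C} d(p, x)^{-m}\,dp$ is bounded by $C(n)R^{k-m}$ uniformly in $x$, so for any cover $\{B(x_i, r_i)\}$ of the current image of $\Phi\circ F_0$ in $C$, averaging yields a $p_C$ with $\sum_i (R\, r_i/d(p_C,x_i))^m \leq C(n)\sum_i r_i^m$, and hence $\HC_m(\pi_{p_C}(A)) \leq C(n)\HC_m(A)$ (this is where the hypothesis $m\leq n$ enters critically, ensuring $m<k$ at every stage). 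Choosing $p_C$ generically so that $p_C \notin F_0(X)$ preserves continuity of $\Phi\circ F_0$. After $n - \lceil m\rceil + 1$ iterations $\Phi$ lands in $N_R(f(Y)) \cup Q(R)^{(\lceil m\rceil - 1)}$.

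For the $\HC_m$ estimate: any compact portion of $Q(R)^{(\lceil m\rceil - 1)}$ has topological dimension $\lceil m\rceil - 1 < m$, hence contributes $0$ to $\HC_m$, so only $F(X)\cap N_R(f(Y))$ matters. Within each grid cube $C$ meeting $N_R(f(Y))$, the image $F(X)\cap C$ is essentially a cone (furnished by Lemma~\ref{cone}) from an interior point of $C$ over $f(Y)\cap \widetilde C$, where $\widetilde C$ is a slight enlargement of $C$, with $\HC_m \leq C(n) R \cdot \HC_{m-1}(f(Y) \cap \widetilde C)$. Summing over cubes and using subadditivity together with the bounded overlap of the enlargements yields $\HC_m(F(X)) \leq C(n) R V \leq c_1(n) V^{m/(m-1)}$, as required. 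The principal obstacle will be making $\Phi$ globally continuous despite the singularity of each radial projection at its center $p_C$; I would resolve this by choosing $p_C$ generically outside $F_0(X)$, which is possible because the averaging inequality holds on a positive-measure subset of $\tfrac{1}{2}C$ while a small preliminary perturbation of $F_0$ ensures that $F_0(X)$ misses an open set of candidates. Finally, tracking the multiplicative averaging constants through the finite iteration yields a single dimensional constant $c_1(n)$.
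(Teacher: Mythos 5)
Your proposed strategy of post-composing a Dugundji extension $F_0$ of $f$ with a radial-projection map $\Phi$ that is the identity on cubes near $f(Y)$ and iterated averaged projection elsewhere has two genuine gaps, both stemming from the fact that you are trying to push \emph{all} of $F_0(X)$ into the skeleton rather than only a thin collar around $f(Y)$.

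First, with $\Phi$ equal to the identity on cubes meeting $N_{R/2}(f(Y))$, the image $F(X)$ near $f(Y)$ is simply $F_0(X)$ restricted to those cubes, and a Dugundji extension carries no bound whatsoever on $\HC_m$. Your final paragraph asserts that within each such cube the image ``is essentially a cone \dots furnished by Lemma~\ref{cone}'', but nothing in the construction you describe produces a cone; $\Phi$ is the identity there, so this claim is unsupported, and without it the estimate $\HC_m(F(X)) \lesssim R\,\HC_{m-1}(f(Y))$ does not follow. Second, your requirement that the averaging points $p_C$ can be chosen outside $F_0(X)$ after a small perturbation fails in general: if, say, $X=D^n$, $Y=S^{n-1}$, and $f$ is the inclusion, then \emph{every} continuous extension $F_0$ has image containing $D^n$ by a degree argument, so $F_0(X)$ fills entire cube interiors and no perturbation respecting $F_0|_Y=f$ can create an open set of admissible centers. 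This is exactly why Guth's pushout lemma needs a smallness hypothesis $\HC_{m-1}(V) \leq c_0(k)R^{m-1}$ on the set being projected, which $F_0(X)$ need not satisfy. The paper circumvents both problems by first thickening $Y$ slightly to $Y'$ with $\HC_{m-1}(\overline{N_\tau(Y')})<\HC_{m-1}(Y)+\delta$, then pushing only the thin collar $Z = \overline{N_\tau(Y')}\setminus Y'$ into successively lower skeleta via homotopies $\Phi_k$ whose \emph{traces} are controlled by the cone inequality (this is what gives the $R\cdot\HC_{m-1}$ bound), and finally extending freely from $\partial N_\tau(Y')$ into $Q(R)^{(\lceil m\rceil-1)}$ over the rest of $\R^n$, where the $\HC_m$ contribution is automatically zero. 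That collar homotopy, which supplies both the cone structure near $f(Y)$ and the smallness needed for the pushout averaging, is the step your outline is missing.
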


\begin{proof}
As in the proof of Lemma \ref{cone} we may assume, without any loss of generality,
that $Y\subset X \subset \R^n$, $Y$ is compact and $f$ is the inclusion map. Fix small $\delta>0$. 
Let $Q(R)$ denote a cubical complex as defined above. We will show that for some constant $c_2(n)$ and
$R= c_2(n)\HC_{m-1}(f(Y))^{\frac{1}{m-1}}+ \delta$
the statement of the Lemma holds.

We can define an open set $Y'$ with smooth boundary, a very small $\tau>0$
and $Z = \overline{N_{\tau}(Y') \setminus Y'} \cong \partial Y' \times [0,1]$, such that $Y \subset Y'$
and $\HC_{m-1}(N_{\tau}(Y')) < \HC_{m-1}(Y)+ \delta$.
Indeed, this follows by slightly thickening
an almost optimal covering of $Y$ by balls.

We will define a map $\Psi: \overline{N_{\tau}(Y')} \rightarrow \R^n$, such that the following holds:
\begin{itemize}
    \item[(a)] $\Psi(x) = x$ for $x \in Y'$;
    \item[(b)] $\Psi(\partial N_{\tau}(Y')) \subset Q(R)^{(\lceil m \rceil -2)} $;
    \item[(c)] $dist(x, \Psi(x)) \leq const(n) R$;
    \item[(d)] $\HC_m(\Psi(\overline{N_{\tau}(Y')} )) \leq const(n) R (
    \HC_{m-1}(Y)+ \delta)
    \leq c_1(n) \HC_{m-1}(f(Y))^{\frac{m}{m-1}} + \delta$.
\end{itemize}
Since $\Psi \big( \partial (\R^n \setminus N_{\tau}(Y')\big) \subset Q(R)^{(\lceil m \rceil-2)}$,
we can extend $\Psi$ to a map from $\R^n$ to
$Q(R)^{(\lceil m \rceil-1)} \cup \Psi(N_{\tau}(Y'))$.
Restricting $\Psi$ to $X \subset \R^n$
gives us the desired extension of $f$.

To define map $\Psi$ on the set $Z \cong \partial Y' \times [0,1]$ we will define a sequence of homotopies of $\partial Y'$
into successively smaller dimensional skeleta $Q(R)^{(k)}$, $k = n, ... , \lceil m \rceil-2$.

Let $C$ be a $k$-face in $Q(R)^{(k)}$, $p \in C$ and 
let $P_{C,p}: C \setminus \{p\} \rightarrow \partial C$
denote the radial projection map. 
Suppose $k \geq \lceil m \rceil -1$ and $V \subset C$.
By the ``pushout from an average point'' argument 
\cite[Lemma 7.2]{Gu3} there exists a positive constant $c_0(k)$, such that if 
\begin{equation} \label{pushout}
\HC_{m-1}(V) \leq c_0(k) R^{m-1}
\end{equation}
then there exists
a  point $p \in C \setminus V$, such that $\HC_{m-1}(P_{C,p}(V)) \leq
const(k) \HC_{m-1}(V)$.  
Define a homotopy $\Phi_{p,V}: [0,1] \times V \rightarrow C$ given by
$$\Phi_{p,V}(t, x) = (1-t) x + t P_{C,p}(x)$$
By cone inequality 
we then have
$$\HC_m(\Phi_{p,V}([0,1]\times V))\leq const(k)\  R\  \HC_{m-1}(V) $$
Note that the homotopy $\Phi_{p,V}$ is constant on $V \cap \partial C$.

We can define homotopies $\Phi_k:[0,1] \times V_k \rightarrow Q(R)^{(k)}$, $k=n,...,\lceil m \rceil-1$
as follows. 
We set $V_n= \partial Y'$ and $V_k = \Phi_{k+1}(1, V_{k+1}) \subset Q(R)^{(k)}$ for $k<n$.
On each $k$-face $C$ of $Q(R)^{(k)}$ define $\Phi_k= \Phi_{p(V_k \cap C),V_k \cap C}$,
where the point $p(V_k \cap C) \in C$ is chosen as described above. 
We claim that $V_k \cap C$ satisfies (\ref{pushout}), so $\Phi_k$
is well-defined. 
Note that $\Phi_k$ is continuous on all of $V_k$ 
since the homotopy is constant on the boundary of $C$.

To prove the claim we observe the following. Let $\{ \beta(p_j,r_j) \}$ denote a nearly optimal
covering of $V_k$, that is $\sum_j r_j^{m-1} \leq
\HC_{m-1}(V_k)+ \delta$. If $\HC_{m-1}(V_k) \leq R^{m-1}$,
then each ball $\beta(p_j,r_j)$ intersects at most
$const(n)$ faces of $Q(R)^{(k)}$. Hence, if 
$\{C_m\}$ are the $k$-faces of $Q(R)^{(k)}$, then
$$\sum_m \HC_{m-1}(V_k \cap C_m) \leq const(n) \HC_{m-1}(V_k)$$
Choosing $c_2(n)$ in the definition of $R$ sufficiently large
we can ensure that inequality (\ref{pushout})
is satisfied if we replace $V$ with $V_k \cap C$ for each $k=n,...,\lceil m \rceil-1$
and each face $C$ of $Q(R)^{(k)}$.
With this choice of $c_2(n)$ 
we have that homotopies $\Phi_k$ are well-defined for $k=n,...,\lceil m \rceil-1$.

Performing homotopies $\Phi_k$ we obtain a homotopy of $\partial Y'$
into $Q(R)^{(\lceil m \rceil-2)}$. 
This defines map $\Psi$ on $Z$ and define $\Psi$ to be the identity on $Y'$.
It follows from the construction that condition
$(a) - (d)$ are satisfied.
\end{proof}

Observe that in the course of the proof of the previous proposition we
also established (cf. \cite[Proposition 7.1]{Gu3}) that:

\begin{lemma} \label{Federer-Fleming radius}
Let $Y\subset \mathbb{R}^n$ be compact, $m\in [1,n-1]$. There exists a compact $(\lceil m\rceil-1)$-simplicial
complex $Q\subset \mathbb{R}^n$ and a map $\rho:Y\longrightarrow Q$
such that for each $y\in Y$ $dist(y,\rho(y))\leq const(n)\HC_m(Y)^{1\over m}$.
\end{lemma}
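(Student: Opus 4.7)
The plan is to observe that the same iterative pushout construction appearing in the proof of Lemma \ref{Federer-Fleming}, when applied directly to $Y$ itself (rather than to the boundary of a thickened neighbourhood of $Y$) and when tracking $\HC_m$ (rather than $\HC_{m-1}$) through the iteration, already produces the required map $\rho$ and complex $Q$ as a byproduct. So I would simply re-run that construction one index up.

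Concretely, set $R = c(n)\HC_m(Y)^{1/m}$, where $c(n)$ is chosen below, and let $Q(R)\subset\R^n$ be the standard cubical complex of side $R$, translated generically. Let $V_n := Y \subset Q(R)^{(n)}$. For $k=n, n-1, \ldots, \lceil m \rceil$ I would inductively construct $V_{k-1}\subset Q(R)^{(k-1)}$ by applying, in each $k$-face $C$ of $Q(R)$, the ``pushout from an average point'' argument of \cite[Lemma~7.2]{Gu3}: provided $\HC_m(V_k\cap C)\leq c_0(k)R^m$, there exists $p_C\in C\setminus V_k$ such that the radial projection $P_{C,p_C}:V_k\cap C\to\partial C$ satisfies $\HC_m(P_{C,p_C}(V_k\cap C)) \leq a(n)\HC_m(V_k\cap C)$. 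These projections agree on $(k-1)$-faces and glue to a continuous map $V_k\to V_{k-1}\subset Q(R)^{(k-1)}$. One then defines $\rho$ as the composition of these maps and $Q := V_{\lceil m\rceil - 1}$.

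The two quantitative ingredients are: (i) the $\HC_m$ bound propagates as $\HC_m(V_k)\leq K(n)^{n-k}\HC_m(Y)$ with $K(n) := a(n)\,\mathrm{const}(n)$, because once $c(n)$ is large enough every ball in an almost-optimal cover of $V_k$ has radius $\ll R$ and meets at most $\mathrm{const}(n)$ $k$-faces, so $\sum_C\HC_m(V_k\cap C)\leq \mathrm{const}(n)\HC_m(V_k)$; (ii) each pushout step moves points by at most $\mathrm{diam}(C)\leq\sqrt{n}\,R$. Choosing $c(n)^m \geq K(n)^{n+1}/\min_k c_0(k)$ guarantees the cell-wise smallness hypothesis at every stage, and (ii) yields a total displacement of at most $(n-\lceil m\rceil+1)\sqrt{n}\,R \leq \mathrm{const}(n)\HC_m(Y)^{1/m}$. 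Compactness of $Y$ makes $Q$ lie in finitely many $(\lceil m\rceil -1)$-faces, which after triangulation is a compact simplicial complex of the required dimension. The only substantive step is the averaging-point pushout for $\HC_m$, but this is the same argument used in the proof of Lemma \ref{Federer-Fleming}, insensitive to whether the index of the Hausdorff content is $m-1$ or $m$ provided the cell dimension exceeds the index by enough to make the averaging non-degenerate; propagating the smallness condition through $n-\lceil m\rceil+1$ stages is the bookkeeping that forces the choice of $c(n)$ above and is the only place where one must be careful.
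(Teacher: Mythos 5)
Your proof is correct and follows essentially the same approach as the paper, which simply observes that Lemma~\ref{Federer-Fleming radius} is a byproduct of the pushout construction in the proof of Lemma~\ref{Federer-Fleming} with the index shifted by one (so $\HC_m$ replaces $\HC_{m-1}$ and the target skeleton becomes $Q(R)^{(\lceil m\rceil-1)}$). You make the argument explicit by running the averaging pushout directly on $Y$ rather than on $\partial Y'$, tracking the multiplicative growth of $\HC_m(V_k)$ and the per-stage displacement bound $\sqrt{n}\,R$ through the $n-\lceil m\rceil+1$ stages, which is precisely the bookkeeping the paper leaves to the reader.
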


Here $m$ corresponds to $m-1$ form the previous lemma; $Q$ is a bounded
subcomplex of $Q(R)^{(\lceil m\rceil-1)}$. 

Also, for every $n$-dimensional normed
linear space $L$  there exists a linear isomorphism $T:L\longrightarrow\mathbb{R}^n$
such that $\Vert T\Vert \Vert T^{-1}\Vert\leq \sqrt{n}$ (F. John's theorem).
Therefore, two previous lemmae
can be imeediately generalized to an arbitrary $n$-dimensional ambient normed linear space
instead of $\mathbb{R}^n$.

\section{Isoperimetric extension inequality} \label{Iso}
In this section we prove the isoperimetric inequality 
Theorem \ref{isoperimetric0}.
Recall that given $Y \subset X$ and a map $f: Y \rightarrow U$
we denote by $\cF_m(f,X,U)$ the infimum of $\HC_m(F(X))$ over all extensions
$F:X \rightarrow U$ of the map $f$ and by $\cF_m(f,X,U;W)$ the analogous infimum of $\HC_m(F(X);W)$,
where Hausdorff content is measured with 
respect to balls whose center lies in $W$.
The following result can be thought of 
as a version of an isoperimetric 
and filling radius inequalities in Banach spaces (compare with Theorem A$'''$ in Appendix 2 of \cite{GromovFilling}).

Recall, that a CW complex $P$ is called $k$-connected
if $P$ is connected and $\pi_j(P) = 0$ for
$j=1,...,k$.

\begin{theorem}\label{isoperimetric}
Let $m\in(1, \infty)$, $S$ be a Banach space,
$U\subset S$ a closed ball, 
$X$ metric space and
$Y \subset X$ closed subspace.
Suppose $f: Y \longrightarrow U$ is a continuous map
with $f(Y)$ compact.
There exists constant $I_1(m)>0$, such that 
$$\cF_m(f,X, U) \leq  I_1(m)\HC_{m-1}(f(Y))^{m\over m-1}.$$

Moreover, we have the following estimate for
the filling radius. There exists a constant
$I_2(m)>0$ with the following property.
For every $\delta>0$ let $R= I_2(m)\HC_{m-1}(f(Y))^{1\over m-1}+\delta$
and let $\epsilon>0$ be arbitrary. 
There exists an extension $F: X \longrightarrow U$
of $f$, such that 
$$\HC_m(F(X)) \leq I_1(m)\HC_{m-1}(f(Y))^{m\over m-1} + \epsilon,$$
and an $(\lceil m \rceil -1)$-dimensional CW complex $W \subset U$,
such that $F(X) \setminus N_R(f(Y)) \subset W$.
If $N_R(f(Y)) \subset P \subset U$, where $P$ is
an $(\lceil m \rceil-2)$-connected open set
, then we can ensure that $W\subset P$, and, therefore,
$$\cF_m(f,X,P) \leq  I_1(m)\HC_{m-1}(f(Y))^{m\over m-1}.$$

\end{theorem}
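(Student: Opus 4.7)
The plan is to proceed by induction on $\lceil m \rceil$, following the Gromov–Wenger–Guth strategy as sketched in Section 1.5. By Dugundji's extension theorem (applicable since $U$ is convex) one reduces to the case $Y \subset X \subset U$ with $f$ the inclusion; it then suffices to construct a continuous $\Psi \colon X \to U$ that is the identity on $Y$ and satisfies $\HC_m(\Psi(X)) \leq I_1(m)\HC_{m-1}(f(Y))^{m/(m-1)} + \epsilon$. The base case $\lceil m \rceil = 2$ (so $m \in (1,2]$) is immediate from Lemma \ref{cone}: after first enclosing $Y$ in an ambient ball of radius comparable to $\HC_{m-1}(Y)^{1/(m-1)}$, coning $Y$ to its center produces $\HC_m \leq e m R \HC_{m-1}(Y)$.

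For the inductive step, I would build a sequence of compact subsets $Y = Y_0, Y_1, Y_2, \ldots$ of $U$ and continuous maps $\psi_k \colon Y_k \to Y_{k+1}$ so that $\HC_{m-1}(Y_k)$ decays geometrically in $k$ and each $\psi_k$ moves every point by at most $c(m)\HC_{m-1}(Y_k)^{1/(m-1)}$. To construct $\psi_k$, fix a nearly optimal covering $Q_k$ of $Y_k$ by closed balls and introduce the restricted content $\widetilde{\HC}_{m-1}(\cdot)$, where infima are taken only over sub-collections of $Q_k$. Select a maximal disjoint family of \emph{efficient} balls $B(p_i,r_i) \in Q_k$, meaning those with $\widetilde{\HC}_{m-1}(Y_k \cap B(p_i, r_i)) \leq (r_i/A(m))^{m-1}$ for a suitably large constant $A(m)$. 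For each, invoke the coarea inequality (Lemma \ref{coarea2}) to pick a radius $r_i' \in [r_i/2, r_i]$ with $\HC_{m-2}(Y_k \cap \partial B(p_i,r_i'))$ correspondingly small, and apply the inductive hypothesis one dimension down to fill $Y_k \cap \partial B(p_i,r_i')$ inside $B(p_i,r_i')$ by a set of strictly smaller $\HC_{m-1}$ than the piece $Y_k \cap B(p_i,r_i')$ it replaces. Replacing each such piece by its filling yields $Y_{k+1}$, and the associated filling homotopy (cone over the boundary into the new piece) provides the partial extension of $f$ on the corresponding portion of $X$.

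The main obstacle is establishing a genuine geometric decrease of $\HC_{m-1}$ from $Y_k$ to $Y_{k+1}$, which would be automatic for Hausdorff measure but fails for Hausdorff content: replacing a subset of $Y_k$ by a set of smaller content need not decrease $\HC_{m-1}(Y_k)$ at all (Examples 8–9 of Section 1.4). This is precisely why I work with $\widetilde{\HC}_{m-1}$ and with efficient balls. Since the balls being replaced are those actively contributing to the nearly optimal covering $Q_k$, deleting them honestly reduces $\widetilde{\HC}_{m-1}(Y_k)$ by a definite fraction, and the trivial inequality $\HC_{m-1} \leq \widetilde{\HC}_{m-1}$ transfers the decrease to $\HC_{m-1}$. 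If at some step there are not enough efficient balls to produce a fixed reduction, the leftover portion of $Y_k$ is necessarily concentrated in a region whose diameter is comparable to $\HC_{m-1}(Y_k)^{1/(m-1)}$ (a ``round'' configuration), and Lemma \ref{cone} fills it in one step. The geometric decay then makes $\sum_k c(m)\HC_{m-1}(Y_k)^{1/(m-1)}$ a convergent geometric series bounded by $I_2(m)\HC_{m-1}(Y)^{1/(m-1)}$, which simultaneously controls the filling radius and, via Lemma \ref{cone} applied to each annular piece between $Y_k$ and $Y_{k+1}$, the accumulated $\HC_m$ of the total filling.

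For the final statement about the $(\lceil m \rceil - 1)$-dimensional complex $W$, once $\HC_{m-1}(Y_k)$ has dropped below a specified threshold I invoke Lemma \ref{Federer-Fleming} (or Lemma \ref{Federer-Fleming radius}) to push $Y_k$ into the $(\lceil m \rceil - 1)$-skeleton of a cubical grid of mesh comparable to $R$, and cone off the small residue. When $S$ is infinite-dimensional this step requires first projecting onto a finite-dimensional subspace $S' \subset S$ containing the centers of the initial nearly optimal covering of $Y$; the Kadec–Snobar theorem bounds the projection norm by $\sqrt{\dim S'}$, so one arranges the whole inductive construction with centers restricted to $S'$ (i.e.\ using $\HC_{m-1}(\cdot;S')$ in place of $\HC_{m-1}$), guaranteeing that the projected set has controlled content. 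Finally, when $N_R(f(Y)) \subset P$ with $P$ being $(\lceil m \rceil - 2)$-connected, standard obstruction theory (using that $W$ is at most $(\lceil m \rceil - 1)$-dimensional) lets me homotope $\Psi|_{\Psi^{-1}(W)}$, inside an arbitrarily small neighbourhood of $W$, into $P$, producing the refined extension with $W \subset P$ and $\cF_m(f,X,P) \leq I_1(m)\HC_{m-1}(f(Y))^{m/(m-1)}$.
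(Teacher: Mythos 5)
Your overall architecture (Dugundji reduction, fixing a finite--dimensional $S'$ at the outset and working with $\HC(\cdot;S')$, induction on $\lceil m\rceil$, modified content $\widetilde{\HC}$ with respect to a fixed near-optimal cover, Vitali selection of disjoint balls, coarea to pick good spheres, inductive filling of $\partial B_j \cap Y$, coning each filling back to $Y$, geometric decay of $\HC_{m-1}(Y_k)$ and summation of the series, and the final Kadec--Snobar + Federer--Fleming step together with obstruction theory for the $(\lceil m\rceil-2)$-connected target) is indeed the approach the paper takes.

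However, your base case $m\in(1,2]$ is incorrect as stated. You claim that one can ``enclose $Y$ in an ambient ball of radius comparable to $\HC_{m-1}(Y)^{1/(m-1)}$'' and then cone. But a compact set $Y$ with small $\HC_{m-1}(Y)$ can easily have large diameter --- e.g.\ finitely many far-apart points, or a long thin Cantor-type set --- so no such enclosing ball exists in general. (Property 5 in Section 1.4 only gives the wrong-direction inequality $\HC_{m-1}(Y)\leq rad^{m-1}(Y)$.) The paper's base case does not cone in a single ball; it uses the fact that for $m-1\leq 1$ one may merge overlapping balls of a near-optimal covering to obtain a \emph{disjoint} covering with the same bound on $\sum r_i^{m-1}$, then cones each ball separately (sending the boundary sphere to the center $p_j$), and finally joins the finitely many cone points $p_j$ by a contractible $1$-complex of zero $\HC_m$. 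Your version would need to be replaced with this decomposition argument.

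A secondary, less serious remark: your explicit case dichotomy --- ``enough efficient balls'' versus a ``round configuration'' handled by Lemma \ref{cone} --- follows the informal outline of Section 1.5, but the paper's actual mechanism (Proposition \ref{decomposition}) avoids any case split. The density function $\lambda_p(r)=\widetilde{\HC}_m(B(p,r)\cap Y')/r^m$, the two radii $r(p)<\overline r(p)$, and the density constant $\alpha\in(1/12,1]$ uniformly absorb both regimes; inequality (\ref{eq:main1}) with $\alpha$ close to $1$ is exactly what expresses the ``round'' case without needing a separate argument. Your sketch would still need that quantitative mechanism to close the induction, since ``concentrated in a region of diameter comparable to $\HC_{m-1}(Y_k)^{1/(m-1)}$'' is precisely the kind of assertion that is not straightforward for Hausdorff content (this is the same subtlety that makes the base case nontrivial).
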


\medskip
Our proof yields explicit values for constants $I_1(m)$ and $I_2(m)$. One can take 
$I_1(m)=(100m)^m$ 
and $I_2(m) = (1500m)^{m}$. 
Here $S$ may be infinite dimensional
and $m$ non-integer. Observe, that constant $\epsilon$ and $\delta$ are required in this theorem only for the case, when
$\HC_{m-1}(Y)=0$. If $\HC_{m-1}(Y)\not= 0$, then $\epsilon$ and $\delta$ can be omitted from the formulae in the text of the theorem.
\medskip

%

\textit{Proof of Theorem \ref{isoperimetric}}.
First observe that the last sentence of
Theorem \ref{isoperimetric} follows
from the second last sentence. Indeed,
if $W$ is $(\lceil m \rceil -1)$-dimensional CW complex in $U$
and $P$ is $(\lceil m \rceil -2)$-connected, then there exists a
Lipschitz map $F':W \longrightarrow P$
that is equal to the identity on
$P \cap W$. The statement follows by
composing $F'$ with $F$.

For an arbitrarily small $\epsilon>0$ 
consider a covering of $Y$ by finitely many balls $\{ \beta(q_i, \rho_i) \}$,
such that $\sum \rho_i^{m-1} \leq \HC_{m-1}(Y) + \epsilon$.
Let $S' \subset S$ be a finite-dimensional Banach space that contains $\bigcup q_i$
and the center of ball $U$.

We fix $S'$ for the rest of the proof.
The reason why we need $S'$ is the following. 
Assume we proved the result for $\HC_l$ with $l \leq m$ and we want to prove it for $l=m+1$. Similarly to the argument of Wenger,
using repeated coning
in a certain carefully chosen collection of balls $\{ B_j \}$
and inductive assumption applied to
$\partial B_j \cap Y$
we will reduce the extension 
problem to the case when $Y$ has arbitrarily 
small Hausdorff content. In a similar situation Wenger \cite{W} also applies
coning to finish the proof. In our case coning will not give us the desired bound for the filling radius.
Instead, we use projection onto $S'$. 
By the Kadec-Snobar theorem this projection will increase the Hausdorff content 
at most by a factor of $const(dimension(S'))$.
We then apply Federer-Fleming extension Lemma \ref{Federer-Fleming}
(see subsection \ref{final step}). In order for this argument to work 
we need to fix finite dimensional subspace $S'$ in the beginning
of the proof.

Observe that to prove Theorem \ref{isoperimetric}
it is now enough to construct an extension $F$ of the map $f$, such that 
$F(X) \subset (N_R(Y) \cup W) \cap U$, where $W$ is some $(\lceil m \rceil-1)$-dimensional
CW complex, for 
$R \leq I_2(m) \HC_{m-1}(Y;S')^{\frac{1}{m-1}} + \delta$ and 
$\HC_{m}(F(X)) \leq I_1(m)\HC_{m-1}(f(Y);S')^{m\over m-1}+ \epsilon$.
In particular, we can state the following 
Proposition that implies Theorem \ref{isoperimetric}.

\begin{proposition} \label{reduction isop}
Let $m\in(1, \infty)$, $S$ be a Banach space, $U \subset S$ a closed ball, 
$S' \subset S$ a finite-dimensional linear
subspace, 
$X$ metric space, $Y \subset X$ closed subset and $f: Y \longrightarrow U$ continuous map with $f(Y)$ compact.

There exist constants $I_1(m)>0$ and $I_2(m)>0$,
such that for all $\delta>0$ and 
$\epsilon>0$ the following holds.
Let $R= I_2(m)\HC_{m-1}(f(Y);S')^{1\over m-1}+\delta$.
There exists an extension $F: X \longrightarrow U$
of $f$, such that 
$$\HC_m(F(X);S') \leq I_1(m)\HC_{m-1}(f(Y);S')^{m\over m-1} + \epsilon,$$
and an $(\lceil m \rceil -1)$-dimensional CW complex $W \subset U$,
such that $F(X) \setminus N_R(f(Y)) \subset W$.
\end{proposition}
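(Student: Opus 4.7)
I would proceed by induction on $\lceil m\rceil$, following the Gromov--Wenger--Guth scheme adapted to Hausdorff content, as sketched in Section 1.5. Using Dugundji's extension theorem exactly as in the proof of Lemma \ref{cone}, I may reduce to the case $Y\subset X\subset U$ with $f$ the inclusion, so the task becomes: build a map $F:X\to U$ whose image is obtained by iteratively improving $Y$ into a lighter set $Y_\infty$ living in a low-dimensional complex, while simultaneously filling the ``gaps'' between consecutive approximations. The base case $\lceil m\rceil=1$ (filling a $0$-dimensional content bound) is handled directly by the cone inequality applied after a Tietze extension.

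\textbf{The improvement step.} First fix a near-optimal covering $Q=\{\beta(q_i,\rho_i)\}$ of $Y$ with centres in $S'$ and $\sum_i\rho_i^{m-1}\leq\HC_{m-1}(Y;S')+\epsilon$, and write $\widetilde\HC_{m-1}$ for the restricted Hausdorff content in which only balls of $Q$ are allowed; this $\widetilde\HC_{m-1}$ is the surrogate I use in place of Hausdorff measure. I then construct a sequence $Y=Y_0,Y_1,\ldots$ of compact subsets of $U$ together with continuous maps $\psi_k:Y_k\to Y_{k+1}$. At stage $k$, fixing a large constant $A(m)$, I pick a maximal disjoint family of balls $\{\beta(p_j^{(k)},r_j^{(k)})\}$ centred on $Y_k$ with
\[
\widetilde\HC_{m-1}\bigl(Y_k\cap\beta(p_j^{(k)},r_j^{(k)})\bigr)\ \geq\ \bigl(r_j^{(k)}/A(m)\bigr)^{m-1}.
\]
In each such ball, Lemma \ref{coarea2} on a thin annulus produces a slicing sphere $S_j^{(k)}$ whose intersection with $Y_k$ has $\widetilde\HC_{m-2}$ small compared to $r_j^{(k)}{}^{-1}\widetilde\HC_{m-1}(Y_k\cap\beta(p_j^{(k)},r_j^{(k)}))$. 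The inductive hypothesis in dimension $m-1$ extends $f|_{S_j^{(k)}\cap Y_k}$ to a map of $S_j^{(k)}\cap X$ with controlled $\HC_{m-1}$, and coning this extension to $p_j^{(k)}$ via Lemma \ref{cone} replaces $Y_k\cap\beta(p_j^{(k)},r_j^{(k)})$ by something strictly lighter in $\widetilde\HC_{m-1}$, giving $Y_{k+1}$. The corresponding piece of $F$ on $X$ is built in parallel by Tietze/Dugundji into the union of these cones.

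\textbf{Termination and the final complex.} The selection rule and the frozen covering $Q$ together yield a geometric decay $\widetilde\HC_{m-1}(Y_{k+1})\leq(1-\theta(m))\widetilde\HC_{m-1}(Y_k)$, and since $\psi_k$ displaces points by at most $\mathrm{const}(m)\widetilde\HC_{m-1}(Y_k)^{1/(m-1)}$, the telescoped displacement is $O(\widetilde\HC_{m-1}(Y;S')^{1/(m-1)})$; this is the source of the bound $R=I_2(m)\HC_{m-1}(f(Y);S')^{1/(m-1)}+\delta$. Once $\widetilde\HC_{m-1}(Y_k)$ drops below a threshold depending only on $\dim S'$, I project $Y_k$ onto $S'$ using a Kadec--Snobar projection of norm $\leq\sqrt{\dim S'}$, then apply Lemma \ref{Federer-Fleming} inside $S'$ to push the projection into the $(\lceil m\rceil-1)$-skeleton $W$ of a cubical grid of mesh $\sim\widetilde\HC_{m-1}(Y_k)^{1/(m-1)}$, and extend $f$ on the remaining part of $X$ by Tietze into this cubical complex. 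Summing the cone-cost of each stage (geometric series) with the Federer--Fleming contribution gives the claimed bound, and optimising the auxiliary constants against the cone inequality yields the explicit values $I_1(m)=(100m)^m$ and $I_2(m)=(1500m)^m$.

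\textbf{Main obstacle.} The delicate point I expect to dominate the work is justifying the geometric decay $\widetilde\HC_{m-1}(Y_{k+1})\leq(1-\theta(m))\widetilde\HC_{m-1}(Y_k)$, since the usual Wenger/Guth argument relies on genuine additivity of mass, which fails for $\HC$. The remedy, essentially as outlined in Section 1.5, is twofold: work throughout with $\widetilde\HC_{m-1}$ rather than $\HC_{m-1}$, and select only ``heavy'' balls whose $\widetilde\HC_{m-1}$-content is at least a definite fraction of $(r/A(m))^{m-1}$. Because each such ball is heavy, the $Q$-balls that realise its content are so much smaller than $r$ that they cannot straddle two selected balls, which restores the substitute for additivity needed to extract the decay factor and to pin down the dimensional constants.
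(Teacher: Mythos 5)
Your proposal follows the paper's route faithfully in outline: Dugundji reduction to $Y\subset X\subset U$ with $f$ the inclusion, an iterative improvement loop using a restricted content $\widetilde\HC$ to repair the lack of additivity, transition-scale ball selection and coarea slicing, the inductive hypothesis one dimension lower plus the cone inequality on each slice, a geometric decay of content and telescoping displacement giving $I_2(m)$, and Kadec--Snobar projection onto $S'$ followed by Federer--Fleming once the content is tiny. That is exactly the structure of Proposition \ref{decomposition}, Lemma \ref{improvement pair}, and subsection \ref{final step}.

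Two details in the sketch as written, however, would fail and need to be corrected. First, you cannot freeze the covering $Q$ across the whole iteration. After one improvement step $Y_{k+1}=(Y_k\setminus\bigcup_j B_j)\cup\bigcup_j\tau_j(S_j)$ contains new material (the images $\tau_j(S_j)$) that the original $Q$ does not cover, so $\widetilde\HC_{m-1}(Y_{k+1})$ computed against that $Q$ is not even finite. What the paper freezes is the finite-dimensional subspace $S'$, chosen at the outset to contain the centres of one near-optimal covering of $Y$ and the centre of $U$; the covering $Q$ and its surrogate content $\widetilde\HC$ are re-chosen afresh at each stage inside Proposition \ref{decomposition}, applied to the current $Y_k$, and the stated geometric decay is for $\HC_m(Y_k;S')$, not for a single fixed $\widetilde\HC$. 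Second, the selected balls should be centred at $Q$-centres $q_{i_j}\in S'$ rather than on $Y_k$: the cone inequality only controls $\HC_m(\cdot\,;S')$ when the cone tip lies in $S'$, and property (5) of $\lambda_p$ (namely $\lambda_{q_k}(\tilde r_k)=1$, so $r(q_k)>\tilde r_k$) is what guarantees the $S'$-centred balls $B(q_k,\overline r(q_k))$ still cover $Y_k$ so that Vitali and the additivity substitute go through. Relatedly, your threshold reads as a ``heavy'' condition on the Vitali/coning balls, whereas in the paper $r(p)$ is the transition radius with $\lambda_p(r(p))=A(m)^{-m}$ exactly, the enlarged balls $B(p,\overline r(p))$ with $\overline r(p)\in[(1+\tfrac1m)r(p),(1+\tfrac1m)^2 r(p)]$ are light, and only the inner concentric balls $B(p,r(p))$ are heavy and enter the density count $\alpha$. (A smaller slip: the base case is $\lceil m\rceil=2$, i.e.\ $m\in(1,2]$, not $\lceil m\rceil=1$.)
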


The rest of the proof will be devoted to 
demonstrating Proposition \ref{reduction isop}.
We will proceed by induction on $m$. 

\subsection{The base case $m \in (1,2]$.} \label{base case}

We can apply a well-known generalization of Tietze extension theorem
to locally convex target spaces
(see \cite[Theorem 4.1]{Du}) to find a map $\Phi: X \longrightarrow U$ that coincides with
$f$ on $Y$.
As in the proof of Theorem \ref{cone} we observe that 
Proposition \ref{reduction isop}
immediately follows from its particular case, when $X\subset U$, as we can just apply this particular case to
the sets $Y'=f(Y)$ and $X'=\Phi(X)$ and the inclusion map $f':f(Y) \hookrightarrow U$
and then compose the resulting map that has small Hausdorff content of the image with $\Phi$. 
So without any loss of generality we assume in this subsection that
$Y \subset X\subset U$ and $f$ is the inclusion.

We start from a general overview of our strategy. We are going to partition $X$ into several pieces
and define maps from each piece to $U$ that can be combined into a single
continuous map $F$ from $X$ to $U$ 
and will have the desired properties. 

All but one of these ``pieces" will be small closed neighbourhoods of 
some disjoint closed metric balls in $S$ with centers in $S'$
providing an almost optimal covering of $Y$ (from the
perspective of $\HC_{m-1}$ with centers in $S'$). These pieces will be mapped using the coning construction, so that their boundaries will be mapped to points $p_i$. Finally, it would remain to map the last ``piece", i.e. the closure of the complement to
the (finite) union of all previously described ``pieces". The map on its boundary
is already defined; it sends the boundary to a finite collection of points, and one can  extend it to a map into an arc connecting all these points using Tietze extension theorem.

Let $\{ B_i \}_{i=1}^N$ be a finite collection of closed metric balls covering $Y$ with 
centers in $S'$ and
 $\sum r_i^{m-1} \leq \HC_{m-1}(Y;S')+ \delta$. Without any loss of generality we may assume that each $B_i$ contains a point 
 of $Y$.
 
Observe, that, as $m-1\leq 1$, $\sum r_i^{m-1}\geq (\sum_i r_i)^{m-1}$. Using this observation,
it is straightforward to prove by induction 
on the number of balls in $N$ in the covering that there exists another covering $\{\beta_j\}_{j=1}^{N'}$, $N'\leq N$ with centers in $S'$
such that balls $\beta_j$ are pairwise disjoint and, still, $\sum R_i^{m-1} \leq \HC_{m-1}(Y;S')+ \delta$, where $R_i$ denotes the radius of $\beta_i$.
Indeed, if two balls $B(p_k,r_k)$,  $B(p_l,r_l)$ of the covering
intersect, consider the ball with the radius $r_k+r_l$ centered at the point $p$
on the straight line segment $[p_kp_l]$ at the distance $r_k$ from $p_2$. It is obvious, that $p\in S'$ and $B(p,r_k+r_l)$ contains
both balls $B(p_k,r_k)$,  $B(p_l,r_l)$ . On the other hand $(r_k+r_l)^{m-1}\leq r_k^{m-1}+r_l^{m-1},$ so we can replace both this balls
by $B(p,r_k+r_l)$. Now we can repeat this procedure until all balls of the covering will become disjoint.

Now denote centers and radii of $\beta_j$ as $p_j$ and $R_j$. Choose a small positive $\rho$ such that the slightly larger concentric closed balls
$B(p_j,R_j+\rho)$ still do not pairwise intersect.

Consider a map from $\partial B(p_j,R_j+\rho)\cup (Y\cap \beta_j)$ to $\beta_j$ that sends $\partial B(p_j,R_j+\rho)$ to $p_j$ and is the identity map on $(Y\cap \beta_j)$. (Observe that our assumption about $\rho$ implies that  $Y\cap \beta_j=Y\cap B(p_j, R_j+\rho)$.)
Cone inequality Lemma \ref{cone} implies that this map can be continuously extended to the map $B(p_j,R_j+\rho)\longrightarrow \beta_j$. Denote the restriction
of this map to $X\cap B(p_j,R_j+\rho)$ by $F_j$. By construction $F_j$ sends $X\cap\partial B(p_j,R_j+\rho)$ to the center $p_j$ of $\beta_j$.
Note that $\HC_{m-1}$ of the image of $F_j$ cannot exceed $\HC_{m-1}(\beta_j)\leq R_j^{m-1}$.
Further, $\HC_{m-1}(\cup_j F_j(X\cap B(R_j+\rho))\leq \sum_j R_j^{m-1}\leq \HC_{m-1}(Y)+\delta$.
Also, each point of $\cup_j F_j(X\cap B(R_j+\rho))$ is $(R_j+\rho)$-close to one of the points $p_j$. As $R_j\leq \sum_j R_j\leq (\sum_j R_j^{m-1})^{1\over m-1}\leq (\HC_{m-1}(Y)+\delta)^{1\over m-1}$, one can choose a sufficiently small $\delta$ such that $\cup_{j=1}^{N'} F_j(X\cap B(R_j+\rho))$ is $(\HC_{m-1}^{1\over m-1}(Y)+\epsilon)$-close to the $0$-dimensional complex $\{p_1,\ldots , p_{N'}\}$.

Connect $p_1 \in S'$ to points $p_2$, $p_3$, etc, by straight line segments so that we obtain a finite contractible 1-complex $W$.
The map $\cup_j X\cap\partial B(p_j,R_j+\rho)\longrightarrow \cup_j p_j$ can be continuously 
extended to a map $F_{N'+1}:X\setminus\cup_{j=1}^{N'} int(B(p_j,R_j+\rho)\longrightarrow W$. (Here $int(B)$ denotes
the interior of $B$; recall that all our balls are closed.) 
Indeed, one can choose a small  $\mu$ such that all closed balls $B(p_j, R_j+\rho+\mu)$ are disjoint. We can map the part of $X$ outside of the union of these balls to $p_1$. Each point $x$ of $X$
in the annulus $B(p_j,R_j+\rho+\mu)\setminus B(p_j+R_j+\mu)$ will be mapped
to $p_j+{dist(x, p_j)-R_j-\rho\over \mu}(p_1-p_j)$.)
Obviously, $d$-dimensional Hausdorff content
of the image of $F_{N'+1}$ is zero for any $d>1$, and, in particular, for $d=m$.

By construction, we can combine $F_j$, $j=1,\ldots, N'+1$ into one continuous map $F$ of $X$ into $S$. For $m \in (1,2]$ we have $$\HC_m(F(X))=\HC_m(F_j(\cup_{j=1}^{N'}B(p_j,R_j+\rho))),$$
since $F_{N'+1}(X\setminus\cup_{j=1}^{N'} int(B(p_j,R_j+\rho))) \subset W$
has $0$ $m$-dimensional Hausdorff content.
In particular, $\HC_m(F(X))\leq  (\HC_{m-1}(Y)+\delta)^{m\over m-1}$.

\subsection{Inductive step}
By induction we assume the conclusions of 
Proposition \ref{reduction isop}
to be true for all dimensions less than 
or equal to $m$. We will now prove it for $m+1$. 
The following Proposition is the analogue of
the Proposition from \cite{W}.

\begin{proposition} \label{decomposition}
Let $A(m)>m$ and
$Y' \subset S$ be a compact subset. 
For every $\epsilon>0$ there exists a finite set of disjoint balls $\{B_j = B(p_j,r_j) \}$ with centers in $S'$
and a constant $\alpha \in (\frac{1}{12},1]$,
such that the following inequalities hold:

\begin{equation}\label{eq:main0}
    \max_j r_j \leq (1+ \frac{1}{m})^2 A(m) \HC_m(Y';S')^{1\over m} + \epsilon
\end{equation}

\begin{equation}\label{eq:main1}
 \HC_m(Y' \setminus \bigcup B_j;S')\leq (1-\alpha^m)\HC_m(Y';S') + \epsilon
\end{equation}

\begin{equation} \label{eq:main2}
    \sum_j r_j \HC_{m-1} (\partial B_j \cap Y';S')^{\frac{m}{m-1}}
    \leq {200\ 4^{1\over m-1}m\alpha^{m+1} \over A(m)^{{1\over m-1}}} \HC_m(Y';S')^{\frac{m+1}{m}}
    + \epsilon 
\end{equation}

\begin{equation} \label{eq:main3}
    \sum_j \HC_{m-1} (\partial  B_j \cap Y';S')^{\frac{m}{m-1}}
    \leq {50m\ 4^{1\over m-1}\alpha^{m} \over A(m)^{{m\over m-1}}}  \HC_m(Y';S')+ \epsilon 
\end{equation}

\begin{equation} \label{eq:main4}
    \sum_j r_j \HC_m (B_j \cap Y';S')
    \leq 20\alpha^{m+1} A(m)  \HC_m(Y';S')^{\frac{m+1}{m}}+\epsilon .
\end{equation}

\end{proposition}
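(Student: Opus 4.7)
\emph{Proof plan.} I plan to adapt Wenger's decomposition argument \cite{W} to the setting of Hausdorff content, circumventing non-additivity by working with the restricted content $\widetilde{\HC}_m$ described in Subsection~1.5. Throughout, fix a very small $\eta > 0$ and a nearly optimal covering $\mathcal{Q} = \{B(x_i,\rho_i)\}$ of $Y'$ with centers $x_i \in S'$ satisfying $\sum \rho_i^m \leq \HC_m(Y';S') + \eta$, and let $\widetilde{\HC}_m$ denote Hausdorff content measured by sub-families of $\mathcal{Q}$. This ensures $\HC_m(\,\cdot\,;S') \leq \widetilde{\HC}_m$ on subsets of $Y'$ while $\widetilde{\HC}_m(Y') \leq \HC_m(Y';S') + \eta$, so any bound in $\widetilde{\HC}_m$ transfers to the desired bound in $\HC_m(\,\cdot\,;S')$ up to~$\eta$.

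For each $x \in S'$ I would define a critical radius
\[
r(x) \;=\; \sup\bigl\{\, r > 0 : \widetilde{\HC}_m(B(x,r)\cap Y') \geq \beta\, r^m \,\bigr\},
\]
with threshold $\beta$ of order $A(m)^{-m}$ chosen so that $r(x) \leq A(m)\HC_m(Y';S')^{1/m}$, which, combined with a subsequent $(1+\tfrac{1}{m})$ annular enlargement, produces (\ref{eq:main0}). Applying the coarea inequality (Lemma~\ref{coarea2}) to the annulus $B(x,(1+\tfrac{1}{m})r(x)) \setminus B(x,r(x))$ yields a radius $\tilde r(x) \in [r(x),(1+\tfrac{1}{m})r(x)]$ with boundary content $\widetilde{\HC}_{m-1}(\partial B(x,\tilde r(x)) \cap Y';S') \lesssim m\beta(1+\tfrac{1}{m})^m r(x)^{m-1}$, exploiting that $\widetilde{\HC}_m(B(x,r)\cap Y')/r^m$ has dropped below $\beta$ just past $r(x)$. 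Running this over the centers of $\mathcal{Q}$ produces candidate balls with tightly controlled boundaries.

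A standard Vitali $5$-times selection then extracts from $\{B(x_i,\tilde r(x_i))\}$ a pairwise disjoint sub-family $\{B_j = B(p_j,r_j)\}$ whose $5$-fold enlargements still cover every point of $Y'$ with critical density at least $\beta$; the Vitali ratio $1/5$, together with the enlargement factor $(1+\tfrac{1}{m})$, is precisely what forces the stated lower bound $\alpha > \tfrac{1}{12}$. With this family in hand, (\ref{eq:main4}) follows from the upper density $\widetilde{\HC}_m(B_j \cap Y') \leq \beta r_j^m$ (the defining property of $r(x)$) combined with $\max_j r_j \lesssim A(m)\HC_m(Y';S')^{1/m}$ and the auxiliary bound $\sum_j r_j^m \lesssim \widetilde{\HC}_m(Y')$ discussed below. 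Summing the coarea boundary bound across the selected balls, together with concavity of $t \mapsto t^{m/(m-1)}$, yields (\ref{eq:main2}) and (\ref{eq:main3}).

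The main obstacle is (\ref{eq:main1})---and, as just noted, the auxiliary bound $\sum_j r_j^m \lesssim \widetilde{\HC}_m(Y')$---both of which require comparing contents of disjoint balls in the absence of additivity. The remedy is the comparable-radii argument of Subsection~1.5. Since all the selected $r_j$ lie in a single dyadic range around $\max_j r_j$, in any near-optimal cover of either $\bigcup_j B_j \cap Y'$ or $Y' \setminus \bigcup_j B_j$ by balls from $\mathcal{Q}$ the covering balls of radius comparable to $\max_j r_j$ are few and can be counted directly, while balls of radius much smaller than $\max_j r_j$ each intersect at most one $B_j$; discarding those that meet any $B_j$ leaves a covering of a slightly shrunken version of the complement and restores weak additivity on the remaining collection. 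This simultaneously gives the required lower bound on $\sum_j r_j^m$ and the $(1-\alpha^m)$ bound in (\ref{eq:main1}), with $\alpha > \tfrac{1}{12}$ as claimed.
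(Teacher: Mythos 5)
Your plan follows the paper's skeleton closely: fixing a near-optimal covering $\mathcal{Q}$, working with the restricted content $\widetilde{\HC}_m$, defining the critical radius $r(x)$ via the threshold density $\beta = A(m)^{-m}$, using the coarea inequality to pick a slice with small boundary content, and invoking Vitali to extract a disjoint sub-family whose controlled enlargements cover $Y'$. Inequalities (\ref{eq:main0}), (\ref{eq:main2}), (\ref{eq:main3}), (\ref{eq:main4}) would then follow much as in the paper.

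However, the core of the proposition---inequality (\ref{eq:main1}) together with the bound $\frac{1}{12}<\alpha\leq 1$, both of which need a weak additivity of $\widetilde{\HC}_m$ on the disjoint family---is handled by an argument that does not work as stated. You assert that ``all the selected $r_j$ lie in a single dyadic range around $\max_j r_j$'' and then use the Plan-of-the-paper heuristic (comparable radii, throw away covering balls meeting more than one $B_j$). But Vitali selection gives no such comparability; the selected balls can have radii spread over arbitrarily many scales, so a covering ball of intermediate size can still meet several of the tiny $B_j$'s, and the claim fails. Trying to save it by restricting to one dyadic scale by pigeonhole would only yield a density $\alpha$ that degrades with the (unbounded) number of scales, not the uniform $\alpha>\frac{1}{12}$ you need. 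The paper's proof uses an entirely different, and scale-free, mechanism: because $\lambda_{p_j}(r(p_j)) = A(m)^{-m}$, any ball of $\mathcal{Q}$ appearing in an optimal $\mathcal{Q}$-cover of $B(p_j,r(p_j))\cap Y'$ has radius $\leq \eta_j(1)^{1/m} = r(p_j)/A(m) < r(p_j)/m$; since $\overline{r}(p_j)\geq (1+\tfrac1m)r(p_j)$, each such ball is contained in $B_j$, hence (by disjointness of $\{B_j\}$) cannot appear in the corresponding cover for another $B_i$, and (since every ball of $\mathcal{Q}$ is ``necessary'') its radius contributes to $\widetilde{\HC}_m(Y')$. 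That gives $\sum_j\eta_j(1)\leq\widetilde{\HC}_m(Y')$ and the bound (\ref{eq:main1}) directly, with no comparability hypothesis. Related to this: you choose $\tilde r(x)\in[r(x),(1+\tfrac1m)r(x)]$, which permits $\tilde r(x)=r(x)$ and destroys the buffer annulus; the paper deliberately takes $\overline{r}(p)\in[(1+\tfrac1m)r(p),(1+\tfrac1m)^2r(p)]$ precisely so that the covering balls of $B(p,r(p))$ are trapped inside $B_j$. Fixing both points---the annulus range and the additivity mechanism---would align your argument with the paper's.
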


\begin{proof}
The main difficulty in the construction of the covering is 
that Hausdorff content is not additive for disjoint sets. 
This can be circumvented by considering a modification of Hausdorff content
of subsets of $Y'$ with
respect to a fixed nearly optimal covering $Q$ of $Y'$.
We will find a collection of disjoint balls $\{B_j = B(p_j, r_j)\}$, such that each $B_j$ contains a slightly
smaller ball $B(p_j, r(p_j))$  with the property
that optimal coverings of $B(p_j, r(p_j))$ and $B(p_i, r(p_i))$, $i \neq j$, by balls chosen from $Q$, are disjoint. This will imply approximate additivity of the modified Hausdorff content for
$\{B_j\}$.

Fix a finite covering of $Y'$ by closed balls $\beta_k$, $k=1,\ldots ,N$ (for some $N$) of radius $\tilde{r}_k$
and with centers in $S'$, so that
$\sum_{k=1}^N \tilde{r}_k^m \leq \HC_m(Y';S')+\epsilon$, where $\epsilon$ can be chosen arbitrarily small. Without any loss of generality
we can assume that no ball $\beta_k$ (even with a very small radius) is contained in the union
of the other balls of the collection. (This requirement can, obviously, be satisfied by inductively removing such ``unnecessary" balls from the collection.) We denote this collection of balls
$Q$, and the center of $\beta_k$ by $q_k$. For each subset $W$ of $Y'$ define $m$-dimensional Hausdorff content $\widetilde{\HC}_m(W)$ with respect to $Q$ as the infimum of $\sum_{k\in J} \tilde{r}_k^m$ over all subsets $J\subset\{1,\ldots ,N\}$ such that $W\subset \bigcup_{k\in J}\beta_k$. In other words, we calculate the Hausdorff content with respect to only balls from the collection $Q$. 
Clearly, for each $W$ we have $\widetilde{\HC}_m(W)\geq \HC_m(W;S')$, so any upper bound
for $\widetilde{\HC}_m$ will be automatically an upper bound for $\HC_m$.
On the other hand, it immediately follows from definitions that $\widetilde \HC_m(Y')\leq \HC_m(Y',S')+\epsilon.$

Fix a point $p \in Y'$ and consider quantity $\lambda_p(r) = {\widetilde{\HC}(B(p,r)\cap Y')\over r^m}$.
(Recall that $B(p,r)$ denotes a closed ball of
radius $r$ centred at $p$).
Observe that the following properties
of $\lambda_p(r)$ directly follow from the definition:
\begin{enumerate}
    \item $\lambda_p(r)$ is piecewise continuous;
    \item $\lim_{x \rightarrow r^-}\lambda_p(x) =\lambda_p(r) \leq \lim_{x \rightarrow r^+}\lambda_p(x)$;
    \item $\lim_{x \rightarrow 0}\lambda_p(x) = \infty$;
    \item $\lim_{x \rightarrow \infty}\lambda_p(x) = 0$;
    \item For each $k\in\{1,\ldots ,N\}$ we have $\lambda_{q_k}(\tilde r_k)=1$.
\end{enumerate}
The third equality holds since $\widetilde{\HC}$ of any non-empty subset of $Y'$ is bounded from below by the m-th power of the radius
of the smallest ball.
For each $p$ we define
$r(p)=\sup \{r\vert \lambda_p(r) \geq {1 \over A(m)^m}\}$. 
Observe that there exists a sequence
of radii $r_l$ approaching $r(p)$ from below with 
$ \lambda_p(r_l) \geq {1\over A(m)^m}$;
on the other hand, 
$ \lambda_p(r) <{1\over A(m)^m}$ for every 
$r>r(p)$. It follows from the properties
of $\lambda_p(r)$ that

\begin{equation} \label{eq:lambda(r(p))}
    \lambda_p(r(p)) = {1\over  A(m)^m}.
\end{equation}

Property (5) of $\lambda$ implies that for each center $q_k$
we have $r(q_k)>\tilde r(q_k)$.

Also, for every $\theta \geq 1$ we have

\begin{equation} \label{eq:lambda2}
\lambda_p(\theta r(p)) < \lambda_p(r(p)) = {1\over  A(m)^m}.
\end{equation}

Now we would like to define concentric balls $B(p, \overline{r}(p))$
for somewhat larger
radii $\overline{r}(p) \geq r(p)$. For this purpose 
consider the annulus
$A=B(p, (1+{1\over m})^2r(p))\setminus B(p, (1+{1\over m})r(p))$. Applying the coarea inequality (Lemma \ref{coarea})
to $A\cap Y'$ we see that there exists $\overline{r}(p)\in [(1+{1\over m})r(p), (1+{1\over m})^2r(p)]$
such that 

\begin{equation} \label{eq:coarea}
\widetilde\HC_{m-1}(\partial B(p, \overline{r}(p)) \cap Y')\leq {2 m^2\over (m+1)r(p)}\widetilde{\HC}_m(B(p, (1+{1\over m})^2r(p)) \cap Y'),
\end{equation}

\noindent
(observe, that our coarea inequality applies to $\widetilde \HC$).

Observe that, as $\overline{r}(q_k)>r(q_k)>\tilde{r}(q_k)$,
the collection of balls $B(q_k, \overline{r}(q_k))$, $k=1, \ldots, N$,
covers $Y'$.
Therefore, one can use the Vitali covering construction
to find a finite set of disjoint balls 
$B_j=B(q_{i_j}, \overline{r}(q_{i_j}))$, $j\in\{1,\ldots, L\}$ from the collection $Q$, so that
$\bigcup_{j=1}^L B(q_{i_j}, 3 \overline{r}(q_{i_j}))$ covers all $Y'$. 
We set $p_j = q_{i_j}$ and
$r_j = \overline{r}(q_{i_j})$. Define {\it the density constant} $\alpha$ as 
$$\alpha = \big(\frac{\sum_{j=1}^L\widetilde{\HC}_m(B(p_j, r(p_j))\cap Y')}{\widetilde{\HC}_m(Y')}\big)^{1\over m}$$ 
We claim that $\alpha\in (\frac{1}{12},1]$, and
the set of balls $\{B_j\}_{j=1}^L$, ($L\leq N$),
satisfies inequalities (\ref{eq:main0})-(\ref{eq:main4}).

It will be convenient to define $\eta_j(\theta) =  \widetilde{\HC}_{m}(B(p_j,\theta r(p_j)) \cap Y')$
and $\theta_j = \frac{r_j}{r(p_j)} \in [1+\frac{1}{m},(1+\frac{1}{m})^2]$.
Since $\eta_j(\theta) = \theta^m r(p_j)^m \lambda_{p_j}(\theta)$ we can write inequality 
(\ref{eq:lambda2}) as follows.
For $\theta \geq 1$

\begin{equation} \label{eq:comparison with larger ball}
\eta_j(\theta) \leq \theta^m \eta_j(1) =
{\theta^m r(p_j)^m \over  A(m)^m}.
\end{equation}

Note that $\alpha= \big({\sum_{j=1}^L \eta_j(1)\over \widetilde{\HC}_m(Y')}\big)^{\frac{1}{m}}$.

\textit{Proof of inequality (\ref{eq:main0}).}
Using the definition of $r_j = \overline{r}(p_j)$ and
(\ref{eq:comparison with larger ball}) we have
$$r_j = \theta_j r(p_j) \leq (1+ \frac{1}{m})^2 A(m) 
\eta_j(1)^{1\over m}
\leq 
(1+ \frac{1}{m})^2 A(m) \HC_m(Y')^{1\over m} + O(\epsilon).$$

\textit{Proof of inequality (\ref{eq:main1}) and the inequalities $\frac{1}{12} < \alpha\leq 1$.}
The key observation to prove the
rest of the inequalities is that
we have additivity of $\widetilde{\HC}_m$
for disjoint sets $B(p_j, r(p_j)) \cap Y' \subset B_j \cap Y'$.
Indeed, let $\{\beta_{k_l} \}$ denote the covering
of $B(p_j, r(p_j)) \cap Y'$ 
realizing its Hausdorff content with respect to $Q$,
$\widetilde \HC_m(B(p_j, r(p_j)) \cap Y') = \sum_l rad(\beta_{k_l})^m$.
By (\ref{eq:comparison with larger ball}) we have
$$rad(\beta_{k_l})\leq \eta_j(1)^{\frac{1}{m}} = \frac{r(p_j)}{A(m)} < \frac{r(p_j)}{m}$$
for each $l$. In particular, if $i\not= j$, then none of
balls $\beta_{k_l}$ can appear as a ball $\beta_{m_{l'}}$ 
in a covering that realizes $\widetilde{\HC}_m(B(p_i, r(p_i))\cap Y')$.
(Indeed, recall that the concentric balls $B(p_i, \overline{r}(p_i))$ and
$B(p_j,\overline{r}(p_j))$ are disjoint, and $\overline{r}(p_i)\geq r(p_i)+\frac{r(p_i)}{m}$, $\overline{r}(p_j)\geq r(p_j)+\frac{r(p_j)}{m}$.)
On the other hand, every ball in $Q$ is necessary for $Q$ to be a covering of $Y'$. 
Therefore, the $m$th powers of the radii of all balls $\beta_{k_l}$
appear in the expression for $\widetilde HC_m(Y')$.
Hence, $\sum_j \eta_j(1)
\leq \widetilde \HC_m(Y'),$ or, equivalently, $\alpha\leq 1$.

Also, $\beta_{k_l}$ does not intersect 
$Y' \setminus B(p_j,r_j)$.
It follows that 
\begin{equation} \label{eq:difference}
\widetilde \HC_m(Y' \setminus \bigcup  B_j) 
    \leq \widetilde \HC_m(Y') - 
\sum_j \eta_j(1)=(1-\alpha^m)\widetilde \HC_m(Y'),
\end{equation}
which immediately implies (\ref{eq:main1}).
On the other hand, since $Y' \subset \bigcup B(q_i, 3\overline{r}(q_i))$ 
and utilizing (\ref{eq:comparison with larger ball}) as well as the inequalities $\overline{r}(p_j)\leq r(p_j)(1+\frac{1}{m})^2$ for all $j$, we have
$$\alpha^m \widetilde \HC_m(Y')
= \sum_j\eta_j(1)\geq \sum_j \frac{\eta_j(3(1+{1\over m})^2)}{3^m(1+{1\over m})^{2m}}
\geq {1\over 3^m (1+{1\over m})^{2m}} \widetilde \HC_m(Y'). $$
Hence, $\alpha \geq {1\over 3(1 + \frac{1}{m})^2} > {1 \over 12}$ (as $m> 1$).
We conclude that
\begin{equation} \label{eq:sum of eta}
\frac{1}{12} < \alpha \leq 1
\end{equation}

\textit{Proof of inequality (\ref{eq:main2}).}
From inequalities (\ref{eq:coarea}) and (\ref{eq:comparison with larger ball}) we obtain 

\begin{equation} \label{eq:estimate2a}
\begin{split}
   \widetilde\HC_{m-1}(\partial B_j \cap Y')
& \leq {2m^2\over (m+1) r(p_j)}\eta_j((1 + \frac{1}{m})^2) \\
& \leq {2m^2\over (m+1) r(p_j)}\eta_j\big((1 + \frac{1}{m})^2\big)^{\frac{1}{m}} 
\eta_j\big((1 + \frac{1}{m})^2\big)^{\frac{m-1}{m}} \\
& \leq \frac{2(m+1)} {A(m)} \eta_j\big((1 + \frac{1}{m})^2\big)^{\frac{m-1}{m}} \\
& \leq \frac{2(m+1)(1+\frac{1}{m})^{2m-2}}{A(m)} \eta_j(1)^{\frac{m-1}{m}}\\
& \leq {2e^2m \over A(m)} \eta_j(1)^{\frac{m-1}{m}}.
\end{split}
\end{equation}

We use (\ref{eq:estimate2a}), (\ref{eq:comparison with larger ball}) and the definition of $\alpha$
to bound from above the
the left hand side of 
(\ref{eq:main2}).

\begin{equation} \label{eq:estimate2b}
\begin{split}
\sum_j r_j \widetilde\HC_{m-1} (\partial B_j \cap Y')^{\frac{m}{m-1}}
& \leq \sum_j  \big( {2(m+1)(1+\frac{1}{m})^{2m-2} \over A(m)} \big)^{\frac{m}{m-1}}\theta_j r(p_j) \eta_j(1)\\
& \leq  \big( { 2(m+1)(1+\frac{1}{m})^{2m-2}\over A(m)} \big)^{\frac{m}{m-1}}(1 + \frac{1}{m})^2 A(m) \sum_j  \eta_j(1)^{\frac{m+1}{m}} \\
& \leq  {200\ 4^{1\over m-1}m  \over A(m)^{{1\over m-1}}}  \big(\sum_j \eta_j(1)\big)^{\frac{m+1}{m}} \\
& \leq {200\ 4^{1\over m-1}m\alpha^{m+1} \over A(m)^{{1\over m-1}}} \widetilde \HC_m(Y')^{{m+1\over m}} \\
& \leq {200\ 4^{1\over m-1}m\alpha^{m+1} \over A(m)^{{1\over m-1}}} \HC_m(Y';S')^{{m+1\over m}} +O(\epsilon).
\end{split}
\end{equation}
Estimate $(2(m+1)(1+\frac{1}{m})^{2m-2})^{\frac{m}{m-1}}(1+\frac{1}{m})^2\leq 200m\ 4^{1\over m-1}$
in the third line
can be verified by an elementary but tedious calculation using the inequality $m>1$. 
This finishes the proof of (\ref{eq:main2}).

\noindent
\textit{Proof of inequalities (\ref{eq:main3}) and
(\ref{eq:main4}).}
Using (\ref{eq:estimate2a})  and the definition of $\alpha$ we obtain:

\begin{equation*} 
\begin{split}
      \sum_j \widetilde\HC_{m-1} (\partial \tilde B_j \cap Y')^{\frac{m}{m-1}} 
    &  \leq \big({2(m+1)(1+\frac{1}{m})^{2m-2}\over A(m)}\big)^{{m\over m-1}} \sum_j \eta_j(1) \\
    & \leq {50m\ 4^{1\over m-1}\alpha^m \over A(m)^{{m\over m-1}}}  \HC_m(Y';S')+O(\epsilon) .
\end{split}
\end{equation*}

Using (\ref{eq:comparison with larger ball}), the definition of $\alpha$, and the inequality $\theta_j\leq (1+{1\over m})^2$, we get

\begin{equation*}
    \begin{split}
\sum_j r_j\widetilde \HC_{m} (B_j \cap Y')
& = \sum_j \theta_j r(p_j)
\eta(\theta_j)\\
& \leq \sum_j r(p_j)\theta_j^{m+1} \eta_j(1)\\
& =\sum_j \theta_j^{m+1}A(m) \eta_j(1)^{m+1 \over m}\\
& \leq 20A(m)\alpha^{m+1}\HC_m(Y';S')^\frac{m+1}{m} + O(\epsilon).
    \end{split}
\end{equation*}
\end{proof}

Let $I_1(m)=(100m)^m, A(m)=[100m4^{1\over m-1}I_1(m)]^{m-1\over m}<(100m)^m$,
$I_2(m)=10m12^mA(m)<(1500m)^m$.


To finish the proof of Proposition \ref{reduction isop} we will need
the following definition:

Given an open set $H \subset X$ and
an extension $F: X \longrightarrow U$
of $f$, we will say that 
$(H,F)$ is an $(\alpha,\epsilon)$-improvement pair
for $X,Y,f$ if the following conditions hold
($\bar H$ below denotes the closure of $H$):

\begin{enumerate}
    \item $\HC_{m+1}(F(X\cap \bar H);S')\leq \frac{1}{4}I_1(m)\alpha^{m+1}
    \HC_m(f(Y);S')^\frac{m+1}{m}+ \epsilon$;
    \item $\HC_{m}(F(X\cap \partial H) \cup F(Y\setminus H);S') \leq (1-\frac{\alpha^m}{2})
    \HC_{m}(f(Y);S')+ \epsilon$;
    \item $F(\bar H) \subset N_R(Y)$ for 
    $R \leq 3 A(m) \HC_{m}(f(Y);S')^{\frac{1}{m}} + \epsilon$.
\end{enumerate}

\begin{lemma} \label{improvement pair}
Assume the same set up as in Proposition
\ref{reduction isop}. For every $\epsilon>0$ there exists $\alpha \in [\frac{1}{12},1]$ and
an $(\alpha,\epsilon)$-improvement pair $(H,F)$ for
$X,Y,f$.
\end{lemma}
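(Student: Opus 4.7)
The plan is to extract the $(\alpha,\epsilon)$-improvement pair from a careful decomposition of $f(Y)$ into well-separated balls. First I would apply Proposition~\ref{decomposition} with $Y' = f(Y)$ to obtain pairwise disjoint closed balls $B_j = B(p_j, r_j)$ centred at $p_j \in S'$ together with a density constant $\alpha \in (1/12, 1]$ satisfying (\ref{eq:main0})--(\ref{eq:main4}); this decomposition will drive both the filling content bound and the cycle-decrease estimate. I would then use Dugundji's extension theorem to extend $f$ to $\Phi : X \to U$, and pick a generic small $\rho > 0$ so that the enlarged balls $B(p_j, r_j + \rho)$ remain pairwise disjoint and $f(Y) \cap (B(p_j, r_j + \rho) \setminus B_j) = \emptyset$ for every $j$ (a countable avoidance condition satisfied for generic $\rho$, by compactness of $f(Y)$). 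Setting $H_j = \Phi^{-1}(\operatorname{int} B(p_j, r_j + \rho))$ and $H = \bigsqcup_j H_j$ ensures that $f(Y \cap \bar H_j) \subset B_j$ and that $\Phi(X \cap \partial H_j) \subset \partial B(p_j, r_j + \rho)$ sits at distance at least $\rho$ in $S$ from $f(Y \cap \bar H_j)$.

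Next, for each $j$, I would apply the cone inequality (Lemma~\ref{cone}) to $\tau = f|_{Y \cap \bar H_j}$ with target $\bar B(p_j, r_j + \rho)$, cone point $p_j$, $W = S'$, and cutoff parameter chosen strictly smaller than $\rho$. Passing through the Dugundji reduction used inside the proof of Lemma~\ref{cone}, the ``outside'' clause then forces $F_j(X \cap \partial H_j) \subset \{p_j\}$, and the main bound of Lemma~\ref{cone} yields
\[
  \HC_{m+1}(F_j(X \cap \bar H_j); S') \leq (m+1)\bigl(1 + \tfrac{1}{m+1}\bigr)^{m+1}(r_j + \rho)\, \HC_m(f(Y) \cap B_j; S') + \delta_j.
\]
Gluing the $F_j$'s with $f$ on $Y \setminus H$ and extending by Dugundji on $X \setminus (Y \cup \bar H)$ gives the desired $F : X \to U$. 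To verify (1) I would sum the per-ball bounds and invoke (\ref{eq:main4}), obtaining an estimate of order $e(m+1) A(m)\alpha^{m+1}\HC_m(f(Y); S')^{(m+1)/m}$, which the paper's choice of $A(m)$ absorbs into $\tfrac14 I_1(m)\alpha^{m+1}\HC_m(f(Y);S')^{(m+1)/m} + \epsilon$. For (2), $F(X \cap \partial H) = \bigcup_j \{p_j\}$ is a finite set and so has $\HC_m = 0$, while $F(Y \setminus H) \subset f(Y) \setminus \bigcup_j B_j$ has $\HC_m \leq (1-\alpha^m)\HC_m(f(Y); S') + \epsilon \leq (1 - \tfrac{\alpha^m}{2})\HC_m(f(Y); S') + \epsilon$ by (\ref{eq:main1}). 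For (3), every point of $F_j(\bar H_j \cap X)$ lies on a segment from $p_j$ to a point within $\rho$ of $f(Y) \cap B_j$, so combining with (\ref{eq:main0}) and the elementary bound $(1+1/m)^2 \leq 9/4$ for $m \geq 2$ places $F(\bar H) \subset N_R(f(Y))$ with $R \leq 3 A(m)\HC_m(f(Y); S')^{1/m} + \epsilon$.

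The main obstacle is ensuring that the outside-clause of Lemma~\ref{cone} actually triggers on all of $\partial H_j$, since this is exactly what keeps $F(X \cap \partial H)$ a finite set and lets (2) follow from (\ref{eq:main1}) alone. The generic choice of $\rho$ is the key tool here: it secures a uniform gap of at least $\rho$ in $S$ between $\Phi(X \cap \partial H_j)$ and $f(Y \cap \bar H_j) \subset B_j$, which combined with a cutoff parameter strictly less than $\rho$ forces $F_j$ to collapse $X \cap \partial H_j$ onto $\{p_j\}$. A secondary subtlety is that tightening the constant in (1) down to exactly $\tfrac14 I_1(m)$ may require invoking the inductive hypothesis (Proposition~\ref{reduction isop} in dimension $m$) to first replace $f|_{Y \cap f^{-1}(\partial B_j)}$ by a thinner inner filling of $\partial B_j \cap f(Y)$ inside $B_j$ and then cone that filling to $p_j$, thereby replacing (\ref{eq:main4}) by the sharper (\ref{eq:main2}) in the summation; this is exactly the scenario that the balancing identity $A(m)^{m/(m-1)} = 100 m \cdot 4^{1/(m-1)} I_1(m)$ in the paper's choice of $A(m)$ is designed to exploit.
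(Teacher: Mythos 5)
The proposal diverges from the paper's proof in a critical way. You avoid the inductive hypothesis by directly coning $f(Y) \cap B_j$ to the centre $p_j$, relying on the ``outside'' clause of Lemma~\ref{cone} to collapse $X \cap \partial H_j$ onto $\{p_j\}$. For this to be consistent with the requirement that $F$ extends $f$, you need $Y \cap \partial H_j = \emptyset$, and you try to arrange this by choosing $\rho$ generically so that $f(Y) \cap (B(p_j, r_j+\rho) \setminus B_j) = \emptyset$. This is not achievable in general: the set $\{ d(y,p_j)-r_j : y \in f(Y),\ d(y,p_j)>r_j \}$ can contain an entire interval adjacent to $0$ (take $f(Y)$ a segment crossing $\partial B_j$ transversely), so there is no $\rho>0$, generic or otherwise, for which $f(Y)$ misses the annulus $B(p_j,r_j+\rho) \setminus B_j$, nor even the single sphere $\partial B(p_j,r_j+\rho)$. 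Once $Y \cap \partial H_j \neq \emptyset$, the map $F$ must restrict to $f$ there, so $F(X \cap \partial H_j)$ cannot be the single point $p_j$, your claim that $F(X \cap \partial H)$ is a finite set fails, and the verification of condition (2) falls through.

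This is exactly the obstruction the inductive hypothesis in the paper is designed to handle; it is not a ``secondary subtlety'' for improving constants but the essential mechanism. The paper takes $H_j = \mathrm{int}(B_j)$ with no enlargement and first applies Proposition~\ref{reduction isop} one dimension lower to the pair $(X, Y \cap \partial B_j)$ with target ball $B_j$, producing $\tau_j : X \to B_j$ with $\HC_m(\tau_j(X);S') \le I_1(m) \HC_{m-1}(\partial B_j \cap Y; S')^{m/(m-1)}$ and $\tau_j$ equal to the identity on $Y \cap \partial B_j$. Only then does it cone the combined set $Z_j = (Y \cap B_j) \cup \tau_j(S_j)$ via Lemma~\ref{cone}, with $F_j = \tau_j$ on $S_j = X \cap \partial B_j$ and $F_j$ the identity on $Y \cap B_j$; consistency on $Y \cap \partial B_j$ is automatic since $\tau_j$ extends the inclusion. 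Consequently $F(X \cap \partial H) = \bigcup_j \tau_j(S_j)$ has positive $\HC_m$ controlled by (\ref{eq:main3}), and condition (2) is obtained only with the weaker factor $(1-\alpha^m/2)$ rather than $(1-\alpha^m)$. Both (\ref{eq:main2}) and (\ref{eq:main4}) then enter the bound for condition (1), not (\ref{eq:main4}) alone. Your closing paragraph correctly senses that the inductive hypothesis and (\ref{eq:main2}) are involved, but mislocates the reason: they are needed for the construction to exist, not merely to sharpen a constant.
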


\begin{proof}
As in the subsection \ref{base case} we can assume, without any loss of generality,
that $Y \subset X \subset S$ and $Y$ is compact.
Apply Proposition \ref{decomposition}
to $Y'=Y$ to obtain a set of disjoint closed balls $\{B_j \}$ with centers
in $S'$, and then determine the density constant $\alpha$. We take $H=\cup_j int(B_j)$, where $int(B_j)$ denotes the open ball with the same center and radius as $B_j$.

Now, first, we define $F$ on $\bigcup_j X \cap \partial B_j$.
This is accomplished by applying (using our inductive assumption) 
Proposition \ref{reduction isop}
one dimension lower with $X$,
$Y_j = Y\cap  \partial B_j$ playing the role of $Y$,
and $B_j$ playing the role of ball $U$.
We obtain a map 
$\tau_j: X\rightarrow B_j$ satisfying

\begin{equation} \label{eq:tau}
    \HC_m (\tau_j(X)
    ;S') \leq I_1(m) \HC_{m-1}( \partial B_j \cap Y;S')^{\frac{m}{m-1}} .
\end{equation}

\begin{figure}
   \centering	
	\includegraphics[scale=1]{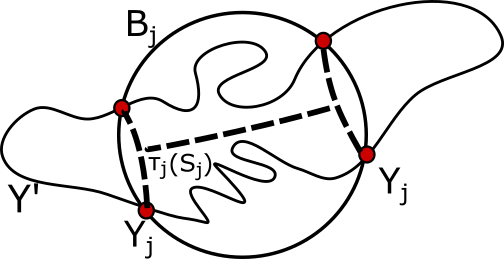}
	\caption{}
	\label{fig:isoperimetric}
\end{figure}

Now we would like to restrict $\tau_j$ to $S_j=X\cap \partial B_j$,
and use these mappings to define $F$ on $\bigcup_j X \cap B_j$.
Let $Z_j = (Y \cap B_j) \cup \tau_j(S_j)$.
We apply Lemma \ref{cone} to define
a cone over $Z_j$ map
$F_j:  X \cap B_j \longrightarrow B_j$, such that 
$F_j= \tau_j$ on $S_j$
and $F_j$ is the identity on $Y \cap B_j$.
This map satisfies the inequality $\HC_{m+1}(F_j(X \cap  B_j);S') \leq e m r_j \HC_m (Z_j;S')$.

Using (\ref{eq:tau}) and (\ref{eq:main2}), (\ref{eq:main4})
from Proposition \ref{decomposition} we estimate
\begin{equation} \label{eq:coning estimate}
\begin{split}
\sum_j \HC_{m+1}(F_j(X \cap { B_j});S') 
& \leq \sum_j e m r_j \big( \HC_{m} (\tau_j(S_j);S')+
\HC_m(Y \cap {B}_j;S') \big) + \epsilon \\
& \leq \sum_j e m r_j \big(I_1(m) \HC_{m-1}(\partial  B_j \cap Y;S')^{\frac{m}{m-1}} + \HC_m(Y \cap {B}_j;S') \big)+ \epsilon\\
& \leq \big({200 e \ 4^{1\over m-1}m^2  I_1(m)\over A(m)^{{1\over m-1}}}+20 e m A(m)\big)\alpha^{m+1} \HC_m(Y;S')^{{m+1\over m}} + \epsilon \\
& < (100m)^{m+1}
\alpha^{m+1}\HC_m(Y;S')^{{m+1\over m}} + \epsilon\\
&=(\frac{m}{m+1})^{m+1}I_1(m+1)\alpha^{m+1} \HC_m(Y;S')^\frac{m+1}{m}+\epsilon .\\
\end{split}
\end{equation}


We estimate the Hausdorff content of
$Y_1= (\bigcup_j \tau_j(S_j)) \cup (Y \setminus \bigcup_j B_j)$.
Combining inequalities (\ref{eq:tau}) and (\ref{eq:main1}), (\ref{eq:main3})
from Proposition \ref{decomposition}
we get
\begin{equation} \label{eq:HCY}
\begin{split}
    \HC_m(Y \setminus \bigcup_j  B_j;S')
+ \sum_j \HC_m(\tau_j(S_j);S') &\leq \big((1-\alpha^m) + {50 m 4^{1\over m-1}\alpha^m I_1(m)\over A(m)^{{m\over m-1}}}\big)\HC_m(Y;S')+ \epsilon \\
& \leq (1-\frac{\alpha^m}{2})\HC_m(Y;S')+ \epsilon\\
& \leq (1-\frac{1}{2*12^m})\HC_m(Y;S')+ \epsilon .
\end{split}
\end{equation}
by our choice of $A(m)$. (Recall that $\alpha\geq {1\over 12}$.)
We also have that 
by (\ref{eq:main0}) for all $j$ the image of
$F_j(X_j)$ is contained in $N_R(P)$ for 
$R \leq \max_j r_j \leq 3 A(m) \HC_{m}(Y;S')^{1\over m} + \epsilon$.
\end{proof}
\par\noindent
{\bf Remark.} To prove Theorem \ref{main2} we will need
the following consequence of the above construction.
Given a compact $Y\subset U\subset S$, let $\tilde Y=(Y\setminus\bigcup_j B_j)\cup (\cup_j\tau_j(Y\cap B_j))$, where
$B_j$ and $\tau_j$ were defined in the proof of the previous lemma.
Now we can define
a continuous map $\theta:Y\longrightarrow \tilde Y$ defined
as (the restriction of) $\tau_j$ on $Y\cap B_j$ for each $j$, and the identity map
outside of $\bigcup_j B_j$. For each point $y\in Y$ $dist (y,\theta(y))$
does not exceed $\max_j r_j\leq 3A(m)HC_m(Y;S')^{1\over m}+\epsilon.$
Also, using the fact that $\tau_j(Y\cap B_j)$ is contained in $\tau_j(X)$, inequality (\ref{eq:tau}), 
and, finally, the same argument as in inequality (\ref{eq:HCY}),  we see that $\HC_m(\tilde Y;S')\leq (1-{1\over 2*12^m})\HC_m(Y;S')+\epsilon$.
We can record our observation as the following lemma:
\begin{lemma}\label{stepsY}
Given a Banach space $S$, its finite-dimensional subset $S'$, a closed ball $U$ in $S$, a compact subset $Y$ of $U$, and an arbitrarily small positive $\epsilon$, there exists a compact $\tilde Y\subset S$ and a continuous map $\theta:Y\longrightarrow \tilde{Y}$ such that
\begin{enumerate}
    \item $\HC_m(\tilde Y;S')\leq (1-{1\over 2*12^m})\HC_m(Y;S')+\epsilon$; and
    \item For each $y\in Y$ $\Vert y-\theta(y)\Vert\leq 3A(m)\HC_m(Y;S')^{1\over m}+\epsilon,$ where $A(m)<(100m)^m$.
\end{enumerate}
\end{lemma}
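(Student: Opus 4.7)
The plan is to extract this lemma directly from the construction already carried out inside the proof of Lemma \ref{improvement pair}. First I would apply Proposition \ref{decomposition} with $Y' = Y$ to obtain the finite family of pairwise disjoint closed balls $\{B_j = B(p_j, r_j)\}$ with centers in $S'$ and the density constant $\alpha \in [\tfrac{1}{12}, 1]$, so that the inequalities (\ref{eq:main0})--(\ref{eq:main4}) all hold for $Y$. Then, for each $j$, I would invoke the inductive case of Proposition \ref{reduction isop} already in force in the ongoing argument, applied with $B_j$ in the role of the ambient ball $U$, $Y \cap \partial B_j$ in the role of $Y$, and $Y \cap B_j$ itself in the role of $X$. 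This produces continuous maps $\tau_j : Y \cap B_j \to B_j$ which restrict to the inclusion on $Y \cap \partial B_j$ and satisfy the Hausdorff content bound displayed in (\ref{eq:tau}).

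Next I would set $\tilde Y := (Y \setminus \bigcup_j B_j) \cup \bigcup_j \tau_j(Y \cap B_j)$ and define $\theta : Y \to \tilde Y$ by $\theta|_{Y \cap B_j} = \tau_j$ and $\theta|_{Y \setminus \bigcup_j B_j} = \mathrm{id}$. Continuity across each sphere $\partial B_j$ is automatic since $\tau_j$ agrees with the inclusion on $Y \cap \partial B_j$, and compactness of $\tilde Y$ is clear as it is a finite union of continuous images of compact sets.

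To verify (1) I would use subadditivity of $\HC_m(\cdot\, ; S')$ to bound $\HC_m(\tilde Y; S')$ by $\HC_m(Y \setminus \bigcup_j B_j; S') + \sum_j \HC_m(\tau_j(Y \cap B_j); S')$, control the first summand by (\ref{eq:main1}), the second by combining (\ref{eq:tau}) with (\ref{eq:main3}), and then plug in the chosen value of $A(m)$ so that the right-hand side collapses — exactly as in the chain of inequalities (\ref{eq:HCY}) — to at most $(1 - \tfrac{\alpha^m}{2})\HC_m(Y;S') + \epsilon \leq (1 - \tfrac{1}{2 \cdot 12^m})\HC_m(Y;S') + \epsilon$, using $\alpha \geq 1/12$. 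For (2), if $y \notin \bigcup_j B_j$ then $\theta(y) = y$ is trivial; otherwise $y$ and $\tau_j(y)$ both lie in $B_j$, so $\|y - \theta(y)\| \leq 2 r_j$, and (\ref{eq:main0}) bounds $r_j$ by $(1+\tfrac{1}{m})^2 A(m)\HC_m(Y;S')^{1/m} + \epsilon$, which after absorbing the numerical prefactor yields the stated $3A(m)$ constant.

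The main point to watch — the potential obstacle — is sequencing the induction correctly, since the $\tau_j$ come from Proposition \ref{reduction isop} at a strictly lower dimension: one must ensure that the base case $m \in (1,2]$ of that proposition, handled directly in Subsection \ref{base case}, does not itself invoke Lemma \ref{stepsY}. It does not, and moreover this lemma is needed only to derive Theorem \ref{main2} rather than to close the induction for Proposition \ref{reduction isop} itself, so no circularity arises and the argument proceeds in a single pass.
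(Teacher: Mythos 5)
Your proposal follows the paper's own argument essentially verbatim: the paper defines $\tilde Y$ and $\theta$ exactly as you do, reusing the balls $B_j$ and the inductively-obtained maps $\tau_j$ from the proof of Lemma \ref{improvement pair}, and derives (1) by the chain of inequalities (\ref{eq:HCY}) and (2) from (\ref{eq:main0}). The one small divergence is in the displacement constant: you correctly bound $\Vert y-\theta(y)\Vert\le 2r_j$ since both $y$ and $\tau_j(y)$ lie in the ball $B_j$ of diameter $2r_j$, whereas the paper asserts $\le r_j$; with the factor $2$, the prefactor $2(1+\tfrac{1}{m})^2$ exceeds $3$ for $m$ close to $1$, so the stated $3A(m)$ would need a slight adjustment (a cosmetic constant issue rather than a flaw in the method, and one that affects the paper's own bookkeeping as well).
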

\vskip 0.5truecm
We continue our proof of Proposition \ref{reduction isop}.
We now define the desired extension
$F$ by repeatedly applying 
Lemma \ref{improvement pair}.

Fix $\epsilon>0$.
We set $X_1 = X$, $f_1=f$ and $Y_1 =Y$.
Inductively, let $(H_k,F_k)$
be an $(\alpha_k,\epsilon_k)$-improvement pair defined in Lemma \ref{improvement pair} for $X_k,Y_k,f_k$, where
$\alpha_k \in [\frac{1}{12},1]$
and $\epsilon_k$ is a sequence of positive numbers rapidly converging to $0$, so that $\sum_{k=1}^\infty \epsilon_k$ can be made arbitrarily small in comparison with
$\epsilon$. More specifically, we can choose $\epsilon_k=\frac{\epsilon}{3m10^m A(m)2^k}$. (Note, that the notation $F_k$ has a new meaning now that
differs from its meaning in the proof of Lemma \ref{improvement pair}.)

We let $X_{k+1}= X_k\setminus H_k$,
$Y_{k+1} = \partial H_k \cup (Y_k \setminus H_k)$
and $f_k$ be the restriction of 
$F_k$ to $Y_k$.
For a large  $K>1$ and for 
each $k\leq K$ we define map $F: \bigcup_{k=1}^{K} H_k \longrightarrow S$ by setting $F(x) = F_k(x)$.
Note that this map is well-defined and
continuous on $\bigcup_{k=1}^K H_k$. Indeed, if
$x \in H_{k_1} \cap H_{k_2}$ for $k_1<k_2$,
then $x \in \partial H_k$ for all $k_1 \leq k \leq k_2$
and $F_{k_2}(x) = F_{k_1}(x)$. We also have that
$F(x) = f(x)$ for all $x \in Y \cap \bigcup H_k$.

By property 1) of $(\alpha, \varepsilon_k)$-improvement pairs we
have that for an arbitrarily large 
$K$, 
the following inequality holds:

\begin{equation} \label{eq:induction estimate}
  \begin{split}
     \HC_{m+1}\big(F(\bigcup_{k=1}^K H_k);S'\big) 
     &\leq \sum_{k=1}^K \HC_{m+1} \big(F_k( H_k);S'\big )\\
     &\leq (\frac{m}{m+1})^{m+1}I_1(m+1) \alpha^{m+1}
     \sum_{k=1}^K (\HC_m(Y_k;S')^\frac{m}{m+1} +\epsilon_k)\\
     &\leq (\frac{m}{m+1})^{m+1} I_1(m+1) \alpha^{m+1}
     \big(\sum_{k=1}^\infty (1-\frac{\alpha^m}{2})^{\frac{(k-1)m}{m+1}}\big)
     \HC_m(Y;S')^\frac{m}{m+1}+\sum_k\epsilon_k  \\
     &  =(\frac{m}{m+1})^{m+1}\frac{\alpha^{m+1}}{1-(1-\frac{\alpha^m}{2})^{\frac{m}{m+1}}} I_1(m+1) \HC_m(Y;S')^\frac{m+1}{m}+ \sum_k\epsilon_k  \\
     & < I_1(m+1)\HC_m(Y;S')+\frac{\epsilon}{2}.
  \end{split}
\end{equation}

(The last inequality can be easily derived once one notices that the generalized
binomial theorem implies that the denominator
$1-(1-\frac{\alpha^m}{2})^{\frac{m}{m+1}}> \frac{m}{m+1}\frac{\alpha^m}{2}$.)

To estimate the filling
radius observe that $F(\bigcup_{k=1}^K H_k) \subset
N_R(Y)$, for $R$ satisfying

\begin{equation}\label{eq:cumulative distance}
\begin{split}
R & \leq \sum_{k=1}^K R_{k} \leq 3A(m)\sum_{k=1}^K (\HC_m(Y_k;S')^{\frac{1}{m}}+\epsilon_k)\\
& \leq 3A(m) \sum_{k=1}^\infty (1-\frac{1}{2*12^m})^{k-1 \over m}
\HC_m(Y;S')^{\frac{1}{m}}+\epsilon\\
& \leq 10m12^mA(m)\HC(Y;S')^{\frac{1}{m}}+\epsilon \leq
I_2(m)\HC_m(Y;S')+\epsilon .
\end{split}
\end{equation}
Last inequality follows by our
choice of $I_2(m)$.

Now we are going to digress and observe that we have obtained 
the following lemma: 

\begin{lemma} \label{moving Y}
Let $Y$ be a compact subset of a closed ball in Banach space
$S$, $S'$ a finite-dimensional subspace of $S$. There exists a sequence of compact subsets $Y^{(i)},$ $i=1,2,\ldots $ with $Y^{(1)}=Y$ and maps $\theta_i:Y^{(i)}\longrightarrow Y^{(i+1)}$ for all $i$ so that for all $k$
\par\noindent
(1) $\HC_m(Y^{(k)};S')\leq (1-{1\over 2*12^m})^{k-1}\HC_m(Y;S')+\epsilon;$
\par\noindent
(2) Let $\Theta_k$ denote $\theta_k\circ\theta_{k-1}\circ\ldots \theta_1$. For each $y\in Y$ $dist(y,\Theta_k(y))\leq I_2(m)\HC_m(Y;S')+\epsilon$.
\end{lemma}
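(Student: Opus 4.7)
The plan is to obtain this lemma by iterating Lemma \ref{stepsY}. Starting from $Y^{(1)} = Y$, at each step $k \geq 1$ I apply Lemma \ref{stepsY} to the compact set $Y^{(k)}$ with a small error parameter $\epsilon_k > 0$ to be chosen, obtaining a new compact set $Y^{(k+1)}$ and a continuous map $\theta_k : Y^{(k)} \to Y^{(k+1)}$. The composition $\Theta_k = \theta_k \circ \cdots \circ \theta_1$ is automatically continuous. One technical point to address up front: Lemma \ref{stepsY} requires its input to be a compact subset of a closed ball in $S$, so I need to know that each $Y^{(k)}$ lies inside such a ball. The displacement bound in property (2) below is uniform in $k$, so all $Y^{(k)}$ lie inside a fixed enlargement of the original ball $U$, and the iteration is well-defined.

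For property (1), Lemma \ref{stepsY} gives inductively
\[
\HC_m(Y^{(k+1)};S') \leq \bigl(1-\tfrac{1}{2\cdot 12^m}\bigr)\HC_m(Y^{(k)};S') + \epsilon_k.
\]
Unrolling this recursion yields $\HC_m(Y^{(k)};S') \leq \bigl(1-\tfrac{1}{2\cdot 12^m}\bigr)^{k-1}\HC_m(Y;S') + \sum_{j=1}^{k-1}\epsilon_j$. Choosing $\epsilon_k = \epsilon \cdot 2^{-k}/C(m)$ for a suitable $C(m)$ makes the accumulated error arbitrarily small, establishing (1).

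For property (2), the triangle inequality gives $\|y - \Theta_k(y)\| \leq \sum_{i=1}^{k}\|\Theta_{i-1}(y) - \Theta_i(y)\|$ (with $\Theta_0 = \mathrm{id}$), and Lemma \ref{stepsY}(2) bounds each summand by $3A(m)\HC_m(Y^{(i)};S')^{1/m} + \epsilon_i$. Substituting (1) and summing the resulting geometric series gives
\[
\|y - \Theta_k(y)\| \leq 3A(m)\HC_m(Y;S')^{1/m}\sum_{i=1}^{\infty}\bigl(1-\tfrac{1}{2\cdot 12^m}\bigr)^{(i-1)/m} + O(\epsilon).
\]
By the Bernoulli inequality $(1-x)^{1/m}\leq 1-x/m$ for $x\in[0,1]$, the denominator $1-(1-\tfrac{1}{2\cdot 12^m})^{1/m}$ is at least $\tfrac{1}{2m\cdot 12^m}$, so the geometric sum is bounded by $2m\cdot 12^m$. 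Multiplying by $3A(m)$ yields a constant at most $6m\cdot 12^m A(m) < 10m\cdot 12^m A(m) = I_2(m)$, which is exactly the same computation as in display \eqref{eq:cumulative distance} of the preceding proof. This gives (2).

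No step presents a genuine obstacle; the proof is a routine bookkeeping exercise that packages the single-step improvement of Lemma \ref{stepsY} into a convergent iteration. The one item that requires care is the selection of the sequence $\{\epsilon_k\}$: it must decay fast enough that both the additive errors accumulated in (1) and those accumulated in (2) remain dominated by the single parameter $\epsilon$, which is easily arranged by a choice of the form $\epsilon_k = \delta\cdot 2^{-k}$ with $\delta$ sufficiently small relative to $\epsilon$, $m$, and $A(m)$.
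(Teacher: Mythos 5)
Your proposal is correct and takes essentially the same approach as the paper: iterate Lemma~\ref{stepsY}, track the geometric decay of $\HC_m(Y^{(k)};S')$, and sum the resulting geometric series for the displacement exactly as in display~\eqref{eq:cumulative distance}. The paper's own argument is just the terse observation that (1) follows from Lemma~\ref{stepsY} and (2) follows by the computation of~\eqref{eq:cumulative distance}; you have filled in the same bookkeeping (choice of $\epsilon_k$, enlarging the ball so the iteration stays well-posed) that the paper leaves implicit.
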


\begin{proof}
We repeatedly apply Lemma \ref{stepsY}. Inductively $\theta_i$ is the map $\theta$ from the remark
for $Y=Y^{(i)}$, and $Y^{(i+1)}=\tilde Y$. The property (1) immediately follows from the remark. Property (2) is proven exactly as (\ref{eq:cumulative distance}).
\end{proof}

\subsection{The case of $\HC_m(Y_{K+1})<\epsilon_0$}
\label{final step}
Let $X' = X \setminus \bigcup_{k=1}^K H_k$.
To finish the proof we need to define 
an extension $F: X' \longrightarrow S$ of map $f_{K+1}: Y_{K+1} \longrightarrow S$
with $\HC_{m}(Y_{K+1}) < \epsilon_0$,
where we can make $\epsilon_0$
arbitrarily small (in particular, much smaller than $\epsilon$) by picking 
$K$ sufficiently large.

By Kadec-Snobar estimate \cite[Theorem III.B.10]{Wo} there exists a 
projection map $\Pi_{S'}: S \longrightarrow S'$, such that $||\Pi(v)|| \leq \sqrt{N} ||v||$,
where $N$ is the dimension of $S'$. 
Let $\Pi_U: S' \longrightarrow U \cap S'$ denote the radial 
retraction onto the ball $U \cap S'$. We define map $\Pi=\Pi_U \circ \Pi_{S'}$.
We have that the Lipschitz constant of $\Pi$ satisfies 
$Lip(\Pi) \leq const(N)$.
Let $\{ B(p_i,r_i)\}$, $p_i \in S'$, be a finite collection of open balls
covering $Y_{K+1}$ with $\sum r_i^m \leq 2\epsilon_0$.
Let $V$ be an open set with $\bigcup_i B(p_i,r_i) \subset V  \subset \bigcup_i B(p_i,2r_i)$ 
and define a map $\Phi: \bigcup B(p_i,2r_i) \longrightarrow S$ 
setting $\Phi(x) = (1-\phi(x)) x + \phi(x) \Pi(x) $,
where $\phi: S \longrightarrow [0,1]$ is a continuous function,
such that $\phi(x) = 0$ for $x \in \bigcup B(p_i,r_i) $
and $\phi(x) = 1$ for $x \in \partial V$.

Observe that we have $\HC_{m}(\Phi(F_K(X)\cap V)) \leq const(N)\epsilon_0$.
Also, $\Phi(F_K(X)\cap V)$ is $const(N)R'$-close to $V$, and, therefore, to
$Y_K$, where $R'=const(N)\max_i r_i\leq const(N)\epsilon_0^{1\over m-1}$.
Finally, we apply Federer-Fleming deformation Lemma \ref{Federer-Fleming}
to define map $F$ that extends $\Phi \circ F_K$ restricted to $F_K^{-1}(\partial V \cap X)$
to a map on $X \setminus F_K^{-1}(V)$. By choosing $\epsilon_0(N)$ sufficiently small we can guarantee that 
$\HC_{m+1}(F(X \setminus V)) < \epsilon/2$
and $F(X \setminus V) \setminus  N_R(Y_K)$
is contained in an $(\lceil m \rceil-1)$-dimensional
CW complex $W$ for $R< \epsilon/2$.

This finishes the proof of Proposition \ref{reduction isop} and, therefore, Theorem \ref{isoperimetric}.

Similarly, we can project $Y_{K+1}$ to finite-dimensional $S'$ using $\Pi_{S'}$ and then use Lemma 2.5 and the remark after it to map $Y_{K+1}$ to a subset of the $(m-1)$-dimensional skeleton of the cubic complex $Q$ in $S'$. Note that $\Pi_{S'}$ maps each point $y\in Y^{K+1}$ to $\sqrt{N}r_i$-close point in $S'$, where $r_i\leq const (m)\epsilon_0^{1\over m}$ is the radius of the ball $B(x_i,r_i)$ that contains $y$. Composing the resulting map to the $(m-1)$-simplicial complex
with $\Theta_K$ from Lemma 3.6 we obtain a continuous map of $Y$ to a $(m-1)$-dimensional simplicial complex such that the distance between
each point $y$ and its image does not exceed $Const(m)\HC_m(Y)^{1\over m}$.

To prove Theorem \ref{main2} change the notation $X$ to $Y$ and observe that
one can assume that $Y$ is a subset of Banach space $L^{\infty}(Y)$, where
the inclusion sends each point $y\in Y$ to the distance function $d_y(x)$ on $Y$ defined as $d_y(x)=dist(x,y)$ (compare \cite{GromovFilling}).
Now the assertion at the end of the previous paragraph immediately
yields Theorem \ref{main2}.

Observe that our proof of Theorem \ref{main2} 
was almost a ``subset" of the proof of Theorem \ref{isoperimetric0}.
This is not surprising. Indeed, the general theory of absolute extensors for metric spaces implies that absolute extensors are precisely
absolute retracts, i.e. contractible ANR. In fact, if we are interested only in extensions of maps from some fixed  metric space $Y$, the requirement of contractibility
can be replaced by the weaker condition that the image of $Y$ is contractible in the considered space.
Correspondingly, our strategy of proving Theorem \ref{isoperimetric} was to embed $Y$ in an open set in $const(m)\HC_m(Y)^{1\over m}$-neighbourhood of $Y$, so that we have the desired
upper bound for $\HC_{m+1}$ of this set. Moreover, this set is contractible to a bounded subcomplex of $(m-1)$-skeleton of a subdivision of $n$-dimensional
linear space into cubes. (At this stage we already have Theorem \ref{main2}.) Adding the cone over this subset (or embedding it into the $m$-skeleton of the same cubic complex as in Lemma 2.4) we obtain an absolute extensor (or an absolute extensor for $Y$). We constructed explicit mappings from arbitrary $X$ to the constructed space, but could have just used the relevant results from Borsuk's theory of retracts.

\subsection{Proof of Proposition 1.3}

\vspace{0.1in}

Now we are going to prove Proposition 1.3. First, we are going to establish the isoperimetric inequality
stated in the proposition.

The proof is almost identical to that of Loomis-Whitney inequality
\cite{LW}.
Consider a covering of $\partial \Omega$ by cubes (nearly) 
realizing the Hausdorff content. 
We can approximate this covering arbitrarily well 
using cubes $\{ C_i \}$ in a sufficiently fine lattice,
with cubes of side length $2r$, where $r$ can be arbitrarily small. (Here we are using the assumption
that $n$ is the dimension of the cubes. For a cube with side length $R$, $R^n$ is its volume, and, therefore, $R^n$ 
will be only somewhat smaller than the sum of volumes of smaller cubes, providing that the union of smaller cubes 
includes the original cube, and is only ``slightly" larger than the ambient cube.)
Let $\pi_j$ denote the projection onto 
the $j$th coordinate $(n-1)$-plane and let $U_j = \pi_j(\bigcup C_i)$.
Note that $\Omega \subset \bigcap \pi_j^{-1} (U_j)$. 
Let $N$ denote the number of $r$-cubes in $\bigcap \pi_j^{-1} (U_j)$,
and $N_j$ be the number of $r$-cubes in 
$U_j$. By choosing the size $2r$ of the grid sufficiently 
small, we may assume that,
for an arbitrarily small $\epsilon>0$,
$\HC_{n-1}(\partial \Omega) \geq \HC_{n-1}(\pi_j(\partial \Omega)) 
\geq N_j r^{n-1}- \epsilon$.
By Theorem 2 in \cite{LW} $$N^{n-1} \leq N_1 N_2...N_n.$$
We conclude that $HC_n(\Omega) \leq N r^n 
\leq (\prod_{j=1}^n N_j r^{n(n-1)})^{1\over n-1} \leq (HC_{n-1}(\partial\Omega) + \epsilon)^{n\over n-1}$.

Now it remains to check what is going on in the case of coordinate cubes. If $\Omega$ is a coordinate cube with side length $R$,
then $\HC_n(\Omega)=({R\over 2})^n$ (Lemma 1.11). Also, $\HC_{n-1}(\partial\Omega)$ is not less than $\HC_{n-1}$ of
any of its $(n-1)$-dimensional faces, that are $(n-1)$-dimensional coordinate cubes with side length $R$. Therefore,
$\HC_{n-1}(\partial\Omega)\geq ({R\over 2})^{n-1}$. On the other hand, the closure of $\Omega$, including $\partial\Omega$, can be covered by just one closed metric ball (i.e. the coordinate cube) of radius ${R\over 2}$. Hence,  $\HC_{n-1}(\partial\Omega)^{n\over n-1}=({R\over 2})^n=\HC_n(\Omega)$.

\section{The proof of systolic inequalities.}

Here we prove the systolic inequalities stated in subsection 1.3. We will be using the upper bounds
for $UW_{m-1}$ provided by Theorems \ref{main2}, \ref{main}.

The proof is modelled on the argument from \cite{GromovFilling} used there to deduce the inequality $sys_1(M^n)\leq c(n)vol^{\frac{1}{n}}(M^n).$
First, observe that according to \cite{GromovFilling}, Appendix 1, Proposition (D) on p. 128, the Kuratowski embedding $f:X\longrightarrow L^\infty(X)$ is at the distance ${1\over 2}UW_{m-1}(X)$ from some
$(m-1)$-degenerate map $g:X\longrightarrow L^\infty(X)$, that is, a map $g$ which is a composition of a map $g_1$ of $X$ into a $(m-1)$-dimensional
polyhedron $K$, and a map $g_2$ of $K$ into $L^\infty(X)$. 
Let $W$ denote the mapping cylinder of $g_1$: the quotient space of the cylinder $X\times [0,1]$ by the quotient
map $g_1:X\times\{1\}\longrightarrow K$. Define $F:W\longrightarrow L^\infty(X)$ as $f$ on the ``bottom" $X\times\{0\}$ of $W$, $g$ (or, equivalently, $g_2$) on the ``top", and as straight line segments connecting $f(x)$ and $g(x)$ in $L^\infty(X)$ on all ``vertical" segments of $W$ ``above" $x\in X\times\{0\}$. 

Exactly as in the proof of Lemma 1.2.B from \cite{GromovFilling} one can prove that if $sys_1(X)> 3UW_{m-1}(X)$, then the classifying map
$Q:X\longrightarrow K(\pi_1(X),1)$ can be extended to $W$ by first mapping it as above to $L^\infty(X)$ and then extending the classifying map
defined on $X\times\{0\}\subset W\subset L^\infty(X)$. As in the proof of Lemma 1.2.B from \cite{GromovFilling} one considers a very fine triangulation
of $W$ and performs the extension to $0$-dimensional, then $1$-dimensional, then $2$-dimensional skeleta of the chosen triangulation of $W$. 
All new vertices of the triangulation are being mapped first to the nearest
points of $f(X)$, and then to $K(\pi_1(X),1)$ via the classifying
map $Q$. All $1$-dimensional simplices are first mapped to minimal geodesics
between the images of their endpoints in $f(X)$, and then to $K(\pi_1(X),1)$
using $Q$. Observe that the triangle inequality implies that their images in $f(X)$ have length $\leq UW_{m-1}(X)+\epsilon$, where $\epsilon$ can be made
arbitrarily small by choosing a sufficiently fine initial triangulation
of $W$. We observe that an easy compactness argument implies that there exists
a positive $\delta$ such that each closed curve of length $\leq 3UW_{m-1}(X)+\delta$
is still contractible. We choose $\epsilon$ above as ${\delta\over 3}$.
Now the boundary of each new $2$-simplex in $W$ has been already mapped to
a closed curve of length $\leq 3(UW_{m-1}(X)+\epsilon)$ in $X$ that is contractible
in $X$. So, we can map the corresponding $2$-simplex in $W$ by, first, contracting the image of
its boundary in $f(X)$ to a point, and then mapping the resulting $2$-disc in
$f(X)$ to $K(\pi_1(X),1)$
using the classifying map $Q$.

Finally, one
argues that the extension to the skeleta of all higher dimensions is always possible as $K(\pi_1(X),1)$ is aspherical, as the corresponding obstructions live in homology groups
of the pair $(W, f(X))$ with coefficients in trivial (higher) homotopy groups of the target space $K(\pi_1(X),1)$.

It remains to notice that this extension is impossible as the inclusion $X\times\{0\}\longrightarrow W$ is homotopic to $g=g_1\circ g_2$, and, therefore,
induces trivial homomorphisms of all homology groups in dimensions $\geq m$. Therefore, the existence of such an inclusion would contradict
the assumption that $X$ is $m$-essential.
\medskip

\medskip\noindent
{\bf Acknowledgements.} A part of this work was done during the authors' visit to the Research Institute for Mathematical Studies, Kyoto University, in July, 2018. Another part of this work was done while three of the authors (Yevgeny Liokumovuch, Alexander Nabutovsky and Regina Rotman) were members of the Institute for Advanced Study in January-April, 2019.
The authors would like to thank both the RIMS (Kyoto University) and the IAS for their kind hospitality. 

The authors would like to thank Gregory Chambers, Larry Guth, Stephane Sabourau, Christina Sormani and Robert Young for stimulating discussions.
 
 The research of Yevgeny Liokumovich was partially supported by NSF Grant DMS-1711053 and NSERC Discovery grant RGPAS-2019-00085.
 The research of Boris Lishak was supported by the Australian Research Council's Discovery funding scheme (project number DP160104502).
 The research of Alexander Nabutovsky was
 partially supported by his NSERC Discovery Grant RGPIN-2017-06068. The research of Regina Rotman was partially supported by her NSERC Discovery Grant RGPIN-2018-04523.

The authors are grateful to the anonymous referees for
numerous corrections and helpful suggestions.
\medskip

\end{document}